\newtheorem{theorem}{Theorem}[section]
\newtheorem{proposition}[theorem]{Proposition}
\newtheorem{corollary}[theorem]{Corollary}
\newtheorem{lemma}[theorem]{Lemma}
\theoremstyle{definition}
\newtheorem{remark}[theorem]{Remark}
\newtheorem{definition}[theorem]{Definition}
\def\[#1\]{\begin{align*}#1\end{align*}}
\def\hM{\hat{M}}
\def\hx{\hat{x}}
\def\hy{\hat{y}}
\def\hX{\hat{X}}
\def\hZ{\hat{Z}}
\def\hf{\hat{f}}
\def\dr{\mathscr{D}_{R}}
\def\Dr{\Delta_{R}}
\def\odr{\mathcal{O}_{\mathscr{D}_{R}}}
\def\oDr{\mathcal{O}_{\Delta_{R}}}
\def\lns{\mathscr{L}_{NS}}
\def\lr{\mathscr{L}_{R}}
\def\pq{\pi_Q}
\def\rarrow{\rightarrow}
\def\nablabar{\overline{\nabla}}
\def\hnabla{\hat{\nabla}}
\def\hgamma{\hat{\gamma}}
\def\sbar{\overline{S}}
\def\hS{\hat{S}}
\def\Oh{\mathcal{O}}
\def\onq{|_{q}}
\def\q{q=(x,\hat{x};A)}
\def\qz{q_0=(x_0,\hat{x}_0;A_0)}
\def\R{\mathbb{R}}
\def\I{\mathcal{I}}
\def\R{\mathbb{R}}
\def\G{\mathbb{G}}
\newcommand{\SE}{\mathrm{SE}}
\newcommand{\SO}{\mathrm{SO}}
\newcommand{\GL}{\mathrm{GL}}
\newcommand{\Aff}{\mathrm{Aff}}
\newcommand{\mc}[1]{\mathcal{#1}}
\newcommand{\ol}[1]{\overline{#1}}
\newcommand{\pa}[1]{\frac{\partial}{\partial{#1}}}
\newcommand{\n}[1]{\left\lVert#1\right\rVert}
\newcommand{\fII}{\mathrm{II}}
\begin{document}

\title{Horizontal Holonomy for Affine Manifolds
\thanks{This research was partially supported by the iCODE institute, research project of the Idex Paris-Saclay.}
}

\author{Boutheina HAFASSA\thanks{bhafassa@gmail.com, Universit\'e Paris-Sud 11, STITS, and, Universit\'e de Tunis El Manar} \and Amina MORTADA\thanks{amina.mortada@gmail.com, Universit\'e Paris Sud 11 \& Universit\'e Libanaise, LNCSR Scholar} \and Yacine CHITOUR\thanks{yacine.chitour@lss.supelec.fr, Universit\'e Paris-Sud 11, Sup\'elec, Gif-sur-Yvette, 91192, France} \and Petri KOKKONEN\thanks{pvkokkon@gmail.com}
}

\date{\today}

\maketitle

\begin{abstract}
In this paper, we consider a smooth connected finite-dimensional manifold $M$, an affine connection $\nabla$ with holonomy group
$H^{\nabla}$ and $\Delta$ a smooth completely non integrable distribution. We define the $\Delta$-horizontal holonomy group $H^{\;\nabla}_\Delta$ as the subgroup of $H^{\nabla}$ obtained by $\nabla$-parallel transporting frames only along loops tangent to $\Delta$. We first set elementary properties of $H^{\;\nabla}_\Delta$ and show how to study it using the rolling formalism (\cite{ChitourKokkonen}).
In particular, it is shown that $H^{\;\nabla}_\Delta$ is a Lie group. 
Moreover, we study an explicit example where $M$ is a free step-two homogeneous Carnot group and $\nabla$ is the Levi-Civita connection associated to a Riemannian metric on $M$,
and show that in this particular case the connected component of the identity of $H^{\;\nabla}_\Delta$ is compact and strictly included in $H^{\nabla}$.
\end{abstract}

%%%%%%%%%%%%%%%%%%%%%%%%%%%%%%%%%%%%%%%%%%%%%%%%%%%%%%%%%%%%%%%%%%%%%%%%%%%%%%%%%%%%%%%%%%%%%%%%%%%%

\section{Introduction}
The purpose of this paper consists of extending the concept of horizontal holonomy of an affine connection in the context of distributions on a manifold
i.e., subbundles of the tangent bundle of a manifold. 
More precisely, consider the triple $(M,\nabla,\Delta)$ where $M$ is an $n$-dimensional smooth connected manifold, $\nabla$ is an affine connection on $M$ (one says then that  $(M,\nabla)$ is an affine manifold) and  $\Delta$ is a smooth distribution on $M$. 
One furthermore assumes that $\Delta$ is completely controllable, i.e., every pair of points in $M$ can be connected by an absolutely continuous tangent to the distribution $\Delta$.
Recall that the holonomy group $H^\nabla$  of $\nabla$ as the subgroup of $\GL(n)$ obtained (up to conjugation) by $\nabla$-parallel transporting frames along absolutely continuous (or piecewise smooth) loops of $M$.

For every point $x\in M$, we define the subset $H^\nabla_\Delta |_{x}$ of 
$H^\nabla |_{x}$,
the holonomy group of $\nabla$ at $x$,
obtained by parallel transporting, with respect to $\nabla$, frames of $M$ along a restricted set of absolutely continuous $\Delta$-horizontal loops based at $x$,
namely along loops which are tangent (almost everywhere) to the distribution $\Delta$.
Thanks to the hypotheses of connectedness of $M$ and complete controllability of $\Delta$, one can deduce that the sets $H^\nabla_\Delta |_{x}$, $x\in M$ are all conjugate to a Lie-subgroup  $H^\nabla_\Delta$ of  $H^\nabla$ that we call $\Delta$-horizontal (or simply horizontal) holonomy group of $\nabla$. In the case where $\nabla$ is the Levi-Civita connection associated to some Riemannian metric $g$ on $M$, one can take both $H^\nabla_\Delta$ and $H^\nabla$ as subgroups of $\mathrm{O}(n)$ and even $\SO(n)$ if in addition $M$ is assumed to be oriented. 
Understanding the relationships between  $H^\nabla$ and $H^\nabla_\Delta$ appears to be an interesting challenge. For instance, given an affine manifold $(M,\nabla)$, determining necessary and (or) sufficient conditions on a completely controllable distribution $\Delta$ of $M$
so that the $H^\nabla_\Delta$ equals $H^\nabla$
is not an obvious question, besides trivial cases.
Another issue to be addressed consists of fixing the pair manifold and distribution i.e., $(M,\Delta)$ and then make the connection $\nabla$ vary. One question could be to undestand if there are connections ''more adapted or intrinsic'' than others (in a sense to be defined) for the pair  $(M,\Delta)$. Moreover, one could also study the mapping $g\mapsto H^{\nabla^g}_\Delta$ where $g$ is a complete Riemannian metric on $M$ and $\nabla^g$ the corresponding Levi-Civita connection, 
for instance describing the range of this mapping. Note that such issues have been already addressed in  \cite{FalbelGorodskyRumin} where the authors consider the case of manifolds of contact type with a distribution arising from an adapted connection.

In this paper, we essentially start this program by defining precisely the $\Delta$-horizontal holonomy group associated to a given admissible triple $(M,\nabla,\Delta)$. Our first main result besides elementary ones is the following: we prove that if $\Delta$ is a constant rank completely controllable distribution, then  $H^\nabla_\Delta$ is a connected Lie subgroup of $\GL(n)$ (or $\mathrm{O}(n)$ if $\nabla$ is the Levi-Civita connection of some Riemannian metric on $M$). This enables us to study $H^\nabla_\Delta$ via its differentiable structure. Moreover, we also propose to study $\Delta$-horizontal holonomy groups by recasting them within the framework of rolling manifolds.
Indeed, recall that E. Cartan defines holonomy groups in \cite{Cartan} as what is called now affine holonomy group by ``developing'' a manifold its tangent space at any point. This procedure has been generalized in \cite{ChitourKokkonen,Godoyandelse,Sharpe} to an arbitrary pair of Riemannian manifolds of same dimension and it is also called as ``rolling a Riemannian manifold onto another one without slipping nor spinning''. Yet, that type of rolling was extended in \cite{MortadaKokkonenChitour} to the case where both manifolds do not have necessarily the same dimension. See also \cite{ChitourGodoyKokkonen} for a historical account as well as applications of the rolling of manifolds. 

In the present situation, the rolling framework amounts to define an $n$-dimensional smooth distribution $\dr$, called the rolling distribution, on the  state space $Q$ defined as the fiber bundle over the product of $(M,\nabla)$ and $(\mathbb{R}^n,\hnabla^n)$ where $\hnabla^n$ is the Euclidean connection on $\R^n$ and the typical fiber over $(x,\hat{x})\in M\times \mathbb{R}^n$ is identified with the set of endomorphisms of $T_xM$. For every $q\in Q$, let $\Oh_{\dr}(q)$, be the $\dr$-orbit through $q$, i.e., the set of endpoints of the absolutely continuous curves starting at $q$ and tangent to $\dr$. Then, for every $q\in Q$
and $x'\in M$, the fiber of $\Oh_{\dr}(q)$ over $x'$ (if non-empty)
is conjugate to a subgroup of $\mathbb{R}^n \rtimes \GL(n)$ whose $\GL(n)$-part is exactly
$H^\nabla$. Moreover, since  $\Oh_{\dr}(q)$ is an immersed manifold in $Q$ whose tangent space at every $q'\in \Oh_{\dr}(q)$ contains the (evaluation at $q'$ of the) Lie algebra generated by vector fields tangent to $\dr$ (cf. \cite{AgrachevSachkov,Jean,Jurdjevic}), it is possible to determine elements of the Lie algebra of $H^\nabla$ as Lie brackets of vector fields tangent to $\dr$. Given now a completely controllable distribution $\Delta$, one can define a subdistribution $\Dr$ of $\dr$ on $Q$ so that, for every $q\in Q$ and
$x'\in M$ the fiber over $x'$ of $\Oh_{\Dr}(q)$ is conjugate to a subgroup of $\mathbb{R}^n \rtimes \GL(n)$ whose $\GL(n)$-part is now equal to $H^\nabla_\Delta$. Since the latter has been proved to be a Lie group, one can determine elements of its Lie algebra by computing Lie brackets of vector fields tangent to $\Dr$. Note that, as also mentioned above, $\GL(n)$ can be replaced by $\mathrm{O}(n)$ ($\SO(n)$ respectively) if $\nabla$ is the Levi-Civita connection of some Riemannian metric on $M$ (if in addition $M$ is oriented). 

We use that approach to provide our second main result, namely an explicit example for
a strict inequality in $\dim(H^\nabla_\Delta)\leq \dim(H^\nabla)$. More precisely, we consider the triple $(M,\nabla,\Delta)$ where $M$ is a free step-two homogeneous Carnot group of $m\geq 2$ generators $(X_i)_{1\leq i\leq m}$, $\nabla$ is the Levi-Civita associated  with the Riemannian metric on $M$ defined in such a way that the $X_i$'s, $1\leq i\leq m$ and the Lie brackets $[X_i,X_j]$, $1\leq i<j\leq m$ form an orthonormal basis and $\Delta$ is the distribution defined by the span of the $X_i$'s, $1\leq i\leq m$. In this case $M$ is of dimension $m+n$ with $n=m(m-1)/2$. Then we prove that $(M,\nabla)$ has full holonomy i.e., $H^\nabla=\SO(n)$,
and that the connected component of the identity of $H^\nabla_\Delta$
is a closed Lie subgroup of $\SO(n)$ of dimension $m+n$.

We close this introduction by describing the structure of the paper. We gather in the second section most of the required notations and we precisely define the $(\nabla,\Delta)$-holonomy group first using classical concepts and secondly by relying on the rolling framework. In the fourth section, we consider in details the example of the free step-two homogeneous Carnot group of $m\geq 2$ generators and we conclude with an appendix containing a technical result needed in the third section.

{\bf Acknowledgement.} The authors thank F. Jean and M. Sigalotti for fruitful discussions and insights regarding the proof of Proposition~\ref{regular-0}.

%%%%%%%%%%%%%%%%%%%%%%%%%%%%%%%%%%%%%%%%%%%%%%%%%%%%%%%%%%%%%%%%%%%%%%%%%%%%%%%%%%%%%%%%%%%%%%%%%%%%

\section{Notations}\label{Notations}

Let $M$ be an $n$-dimensional smooth connected manifold where $n$ is a positive integer.
Let $\mathcal{X} (M)$ be the set of smooth vector fields on $M$. An affine connection $\nabla$ on $M$ is a $\R$-bilinear map
\[
\mathcal{X} (M) \times \mathcal{X} (M)\rarrow \mathcal{X} (M); \quad  (X , Y) \mapsto \nabla_X Y,
\]
which is $C^{\infty} (M)$-linear in the first variable and verifies the Leibniz rule over $C^{\infty} (M)$ in the second variable. The pair $(M,\nabla)$ is said to be an affine manifold. If, moreover, the exponential map $\exp^{\nabla}_x$ of $(M,\nabla)$ is defined on the whole tangent space $T_x M$ for all $x \in M$, then $(M,\nabla)$ is said to be a (geodesically) complete affine manifold. We use $\nabla^n$ and  $\nabla^g$ respectively to denote the Euclidean connection on $\R^n$ and  the Levi-Civita connection of a Riemannian manifold $(M,g)$.  The notation $[\cdot,\cdot]$ stands for the Lie bracket operation in $TM$.

We define the curvature tensor $R^{\nabla}$ and the torsion tensor $T^{\nabla}$ of a affine connection $\nabla$ as
\[
R^{\nabla} (X,Y) Z &= \nabla_X \nabla_Y Z -  \nabla_Y \nabla_X Z - \nabla_{[X,Y]} Z,\\
T^{\nabla} (X,Y) &= \nabla_X Y - \nabla_Y X - [X,Y],
\]
respectively, for smooth vector fields $X,Y,Z$ on $M$.

If $\gamma:I \rarrow M$ is any absolutely continuous (a.c. for short) curve in $M$ defined on a real interval $I$ containing $0$, we use $(P^{\nabla})_0^{t} (\gamma) T_0$, $t\in I$, to denote
the $\nabla$-parallel transport along $\gamma$ of a tensor $T_0$ of rank $(m,k)$ at $\gamma(0)$. It 
is the unique solution for $T(t)$ (in terms of tensor fields of rank $(m,k)$ defined along $\gamma$)
to the Cauchy problem
\[
\nabla_{\dot{\gamma} (t)} T(t) = 0, \quad \text{for a.e. } \; t \in I,\
T(0) = T_0.
\]
Let $(\hM, \hnabla)$ be another affine manifold and $f: M \rarrow \hM$ be a smooth map. we say that $f$ is affine if for any a.c. curve $\gamma:[0,1] \rarrow M$, one has
\begin{equation}\label{e3.32}
f_\star |_{\gamma (1)} \circ {(P^\nabla)}_0^1 (\gamma) = {(P^{\hnabla})}_0^1 (f \circ \gamma) \circ f_\star  |_{\gamma (0)} .
\end{equation}

An a.c. curve $\gamma:[a,b] \rarrow M$ is a loop based at $x \in M$ if $\gamma (a) = \gamma (b) = x$. We denote by $\Omega_M (x)$ the space of all a.c. loops $[0,1] \rarrow M$ based at some given point $x \in M$. Moreover, if $\gamma: [0,1] \rarrow M$ and $\delta: [0,1] \rarrow M$ are two a.c. curves on $M$ such that $\gamma (0) = x$, $\gamma (1) = \delta (0) = y$ and $\delta (1) = z$ where $x,\;y,\;z \in M$, the concatenation $\delta \cdot \gamma$ is the a.c. curve defined by
\begin{align}\label{he1}
\delta \cdot \gamma : [0, 1] \rarrow M,\quad (\delta \cdot \gamma) (t) =  \left\lbrace
                         \begin{array}{ll}
                         \gamma (2t)   & \quad  t \in [0,\frac{1}{2}] ,\\
                         \delta(2t-1) & \quad t \in [\frac{1}{2},1].
                         \end{array}
                         \right.
\end{align}

The previous definitions allow us to state the subsequent definition of holonomy group.

\begin{definition}\label{d3.2}
For every $x\in M$, the holonomy group $H^{\nabla} |_{x}$  at $x$ is defined by
$$
H^{\nabla} |_{x} = \{ (P^{\nabla})_0^1 (\gamma) \mid \gamma \in \Omega_M (x)\}.
$$
\end{definition}
For every $x \in M$, $H^{\nabla} |_{x}$ is a subgroup of $\GL (T_{x} M)$, the group of isomorphisms of $T_{x} M$, which is  clearly
isomorphic to $\GL(n)$ the group of $n \times n $ invertible matrices with real entries. Since $M$ is connected, it is well-known that, for any two points $x,y\in M$,
$H^{\nabla} |_{x}$ and $H^{\nabla} |_{y}$ are conjugate subgroups of $\GL (T_x M)$
and thus one can define $H^{\nabla}\subset \GL(n)$ the holonomy group of the affine connection $\nabla$ (cf. \cite{Joyce}).

\ \ \ \ \ We also recall that a
smooth distribution $\Delta$ on $M$ is a smooth subbundle of $TM$
 The flag of $\Delta$ is the collection of the distributions $ \Delta^j$, $j\geq 1$,
where, for every $x\in M$,
$\Delta^1 |_x := \Delta |_x$ and $\Delta^{s+1} |_x := \Delta^s |_x + [\Delta^1 , \Delta^s]|_x$ for $s \geq 1$. We say that the distribution $\Delta$ on $M$ is of constant rank $m\leq n$ if $\dim( \Delta |_x)=m$ for every $x\in M$ and verifies the Lie algebraic rank condition (LARC) if, for any $x\in M$, there exists an integer $r = r(x)$ such that $\Delta^r |_x  = T_x M$. The number $r(x)$ is called the step of $\Delta |_x$ (cf. \cite{Jean} for more details).

An a.c. curve $\gamma : I \rarrow M$, $I$ bounded interval in $\mathbb{R}$, is said to be $\Delta$-admissible, or $\Delta$-horizontal, if it is tangent to $\Delta$ a.e. on $I$, i.e., if for a.e.
$t \in I$, $\dot{\gamma} (t) \in \Delta |_{\gamma(t)}$. For $x_0 \in M$, the $\Delta$-orbit through $x_0$, denoted $\Oh_{\Delta} (x_0)$, is the set of
endpoints of the $\Delta$-admissible curves of $M$ starting at $x_0$, i.e.,
\begin{align*}
\Oh_{\Delta} (x_0) = \{ \gamma(1) \mid \gamma : [0,1] \rarrow M, \text{ a.c. $\Delta$-admissible curve}, \; \gamma(0) = x_0 \}.
\end{align*}
By the Orbit Theorem (cf.\cite{Jean}), it follows that $\Oh_{\Delta} (x_0)$ is an immersed smooth submanifold of $M$ containing $x_0$ so that the tangent space $T_y\Oh_{\Delta} (x_0)$ for every $y\in \Oh_{\Delta} (x_0)$ contains $Lie_y(\Delta)$, the evaluation at $y \in M$ of the Lie algebra $Lie(\Delta)$ generated by $\Delta$. Furthermore, if a smooth distribution $\Delta'$ on $M$ is a subdistribution of $\Delta$ (i.e., $\Delta' \subset \Delta$), then $\Oh_{\Delta'} (x_0) \subset \Oh_{\Delta} (x_0)$ for all $x_0 \in M$. A smooth distribution $\Delta$ is said to be completely controllable if, for every $x\in M$, $\Oh_{\Delta} (x)=M$ i.e. any two points of $M$ can be joined by an a.c. $\Delta$-admissible curve.  Recall that, the Lie Algebra Rank Condition (LARC), i.e. $Lie_x(\Delta) = T_x M$, is a sufficient condition for the complete controllability of $\Delta$ (cf. \cite{Jean})
when $M$ is connected, which is what we assume in this paper.

%%%%%%%%%%%%%%%%%%%%%%%%%%%%%%%%%%%%%%%%%%%%%%%%%%%%%%%%%%%%%%%%%%%%%%%%%%%%%%%%%%%%%%%%%%%%%%%%%
\section{Affine Holonomy Group of $(M,\nabla,\Delta)$}
%%%%%%%%%%%%%%%%%%%%%%%%%%%%%%%%%%%%%%%%%%%%%%%%%%%%%%%%%%%%%%%%

\subsection{Definitions}

Consider the triple $(M,\nabla, \Delta)$ where $M$ is a smooth manifold, $\nabla$ a affine connection on $M$ and $\Delta$ a completely controllable smooth distribution on $M$. In this section, we will restrict Definition \ref{d3.2} to the $\Delta$-admissible curves on $ M$. To this end, we will define the set of all $\Delta$-admissible loop based at points of $M$.

\begin{definition}
We define $\Omega_{\Delta} (x)$ the set of all a.c. $\Delta$-admissible loops based at $x$, as
$$
\Omega_{\Delta} (x) : = \{ \gamma \mid \gamma : [a,b] \rarrow M \; a.c., \; \gamma (a) = \gamma (b) = x \text{ and $\dot{\gamma} (t) \in \Delta \big|_{\gamma (t)}$ a.e.}  \}
$$
\end{definition}

The following result is immediate from the definitions.

\begin{proposition}\label{p3.8}
The set $\Omega_{\Delta} (x)$ of all a.c. $\Delta$-admissible loop based at $x$ is not empty and is closed under the operation $"\cdot"$ given in \eqref{he1}.
\end{proposition}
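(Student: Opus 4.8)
The plan is to prove two things: first, that $\Omega_\Delta(x)$ is nonempty, and second, that it is closed under the concatenation operation ``$\cdot$'' defined in \eqref{he1}.

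For nonemptiness, I would simply exhibit the trivial loop. The constant curve $\gamma_x : [0,1] \rarrow M$, $\gamma_x(t) = x$ for all $t$, is absolutely continuous, is based at $x$ (since $\gamma_x(0) = \gamma_x(1) = x$), and has $\dot\gamma_x(t) = 0 \in \Delta|_{\gamma_x(t)}$ for a.e.\ $t$, because the zero vector belongs to every fiber of the subbundle $\Delta$. Hence $\gamma_x \in \Omega_\Delta(x)$, so the set is nonempty. (Since $\Delta$ is assumed completely controllable, one could alternatively produce nontrivial elements, but the constant loop already settles nonemptiness and requires no controllability hypothesis.)

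For closure under concatenation, let $\gamma,\delta \in \Omega_\Delta(x)$, each a loop based at $x$. After reparametrizing both to the interval $[0,1]$ (which preserves absolute continuity and the property of being tangent to $\Delta$ a.e.), the concatenation $\delta\cdot\gamma$ defined in \eqref{he1} is again a loop based at $x$: one checks directly from the piecewise definition that $(\delta\cdot\gamma)(0) = \gamma(0) = x$, $(\delta\cdot\gamma)(1) = \delta(1) = x$, and the two pieces match at $t = \tfrac12$ since $\gamma(1) = x = \delta(0)$. The curve $\delta\cdot\gamma$ is absolutely continuous on $[0,1]$ because it is absolutely continuous on each of $[0,\tfrac12]$ and $[\tfrac12,1]$ (being a time-rescaling of $\gamma$ and $\delta$ respectively) and the two pieces agree at the junction point. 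Finally, for a.e.\ $t \in [0,\tfrac12]$ one has $\tfrac{d}{dt}(\delta\cdot\gamma)(t) = 2\dot\gamma(2t) \in \Delta|_{\gamma(2t)} = \Delta|_{(\delta\cdot\gamma)(t)}$ using that $\Delta|_y$ is a linear subspace (hence closed under scalar multiplication by $2$), and symmetrically for a.e.\ $t \in [\tfrac12,1]$. Thus $\delta\cdot\gamma$ is $\Delta$-admissible and based at $x$, so $\delta\cdot\gamma \in \Omega_\Delta(x)$.

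I do not anticipate any genuine obstacle here; the statement is elementary and the proof is a verification at the level of definitions. The only mild points of care are the reparametrization to a common interval $[0,1]$ before applying \eqref{he1}, and the observation that the tangency condition survives concatenation precisely because each fiber $\Delta|_y$ is a vector space closed under the scaling factor $2$ introduced by the rescaling, and because absolute continuity is stable under gluing two a.c.\ pieces that match at the junction.
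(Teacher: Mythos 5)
Your proof is correct and is exactly the elementary verification the paper has in mind: the paper gives no written proof, stating only that the result is ``immediate from the definitions,'' and your argument (constant loop for nonemptiness, gluing of a.c.\ pieces plus linearity of the fibers $\Delta|_y$ for closure under concatenation) supplies precisely the details being elided.
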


We define the holonomy group associated with the distribution $\Delta$ as follows.

\begin{definition}
For every $x\in M$, the holonomy group associated with $\Delta$ at $x$ is defined as
\begin{equation*}
H_\Delta^{\;\nabla} |_{x} : = \{ (P^{\nabla})_0^1 (\gamma) \mid \gamma \in \Omega_{\Delta} (x)\}.
\end{equation*}
\end{definition}

\begin{proposition}
For every $x,y\in M$, $H_\Delta^{\;\nabla} |_{x} $ is a subgroup of $H^{\nabla} |_{x} $ and $H_\Delta^{\;\nabla} |_{x} $ is conjugate to $H_\Delta^{\;\nabla} |_{y}$.  One can thus define $H^{\nabla}_{\Delta}\subset H^{\nabla}\subset \GL(n)$ and we call it the $\Delta$-horizontal holonomy group associated with $\Delta$ and the affine connection $\nabla$.
\end{proposition}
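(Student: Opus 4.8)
The plan is to establish the two assertions separately: first the subgroup property at a fixed base point, then the conjugacy between different base points, the latter being where complete controllability of $\Delta$ enters.

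For the subgroup claim I would note first that $\Omega_{\Delta}(x)\subset\Omega_M(x)$, so that $H_\Delta^{\;\nabla} |_{x}\subset H^{\nabla} |_{x}$ as subsets of $\GL(T_xM)$; it then suffices to check that $H_\Delta^{\;\nabla} |_{x}$ is stable under the group operations inherited from $H^{\nabla} |_{x}$. The identity is realised by the constant loop at $x$, which is trivially $\Delta$-admissible since $0\in\Delta|_x$. Closure under products follows from the multiplicativity of parallel transport under concatenation, namely $(P^{\nabla})_0^1(\delta\cdot\gamma)=(P^{\nabla})_0^1(\delta)\circ(P^{\nabla})_0^1(\gamma)$ (a consequence of uniqueness of solutions of the parallel transport equation), together with Proposition~\ref{p3.8}, which guarantees $\delta\cdot\gamma\in\Omega_{\Delta}(x)$ whenever $\gamma,\delta\in\Omega_{\Delta}(x)$. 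Closure under inverses follows by observing that the time-reversed loop $\gamma^{-}(t):=\gamma(1-t)$ is again $\Delta$-admissible --- since $\Delta|_{\gamma(t)}$ is a linear subspace, $\dot\gamma(t)\in\Delta|_{\gamma(t)}$ forces $-\dot\gamma(t)\in\Delta|_{\gamma(t)}$ --- and that $(P^{\nabla})_0^1(\gamma^{-})=\big((P^{\nabla})_0^1(\gamma)\big)^{-1}$.

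For the conjugacy claim, the key is that complete controllability of $\Delta$ furnishes, for any $x,y\in M$, a $\Delta$-admissible curve $\alpha:[0,1]\rarrow M$ with $\alpha(0)=x$ and $\alpha(1)=y$. Set $\tau:=(P^{\nabla})_0^1(\alpha):T_xM\rarrow T_yM$. Given $\gamma\in\Omega_{\Delta}(x)$, I would form the loop $\alpha\cdot\gamma\cdot\alpha^{-}$ based at $y$ --- traversing $\alpha^{-}$ first, then $\gamma$, then $\alpha$, in the convention of \eqref{he1}. Each piece is $\Delta$-admissible, hence so is the concatenation, so $\alpha\cdot\gamma\cdot\alpha^{-}\in\Omega_{\Delta}(y)$, and multiplicativity of parallel transport gives $(P^{\nabla})_0^1(\alpha\cdot\gamma\cdot\alpha^{-})=\tau\circ(P^{\nabla})_0^1(\gamma)\circ\tau^{-1}$. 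This shows $\tau\, H_\Delta^{\;\nabla} |_{x}\,\tau^{-1}\subseteq H_\Delta^{\;\nabla} |_{y}$; exchanging the roles of $x$ and $y$ and replacing $\alpha$ by $\alpha^{-}$ (still $\Delta$-admissible) yields the reverse inclusion, whence $H_\Delta^{\;\nabla} |_{y}=\tau\, H_\Delta^{\;\nabla} |_{x}\,\tau^{-1}$. After fixing linear frames identifying each $T_zM$ with $\R^n$, the map $\tau$ becomes an element of $\GL(n)$ conjugating the two matrix groups; the conjugacy class is thus independent of the base point, and choosing a representative defines $H^{\nabla}_{\Delta}\subset\GL(n)$, contained in $H^{\nabla}$ by the first part.

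The argument is essentially formal; the one conceptual point --- and the reason this is not a mere restriction of the classical statement for $H^{\nabla}$ --- is that the conjugating transport must be taken along a curve tangent to $\Delta$. Mere connectedness of $M$ would only provide an arbitrary connecting curve, along which the sandwiched loop $\alpha\cdot\gamma\cdot\alpha^{-}$ need not remain $\Delta$-admissible; it is precisely complete controllability that supplies a $\Delta$-admissible $\alpha$ and makes the conjugation respect the horizontal constraint. This is the step I would treat most carefully.
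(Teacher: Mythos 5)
Your proof is correct and follows essentially the same route as the paper's: the subgroup property via closure of $\Omega_{\Delta}(x)$ under concatenation and time-reversal together with the standard multiplicativity and inversion properties of parallel transport, and the conjugacy via transport along a $\Delta$-admissible connecting curve supplied by complete controllability. The paper's own proof is merely terser, citing \cite{Joyce} for the parallel-transport identities and leaving the conjugation argument implicit; your version fills in exactly those details, including the correct observation that the conjugating curve must itself be horizontal.
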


\begin{proof}
Since $\Omega_{\Delta} (x)$ is a nonempty set for any $x \in M$, then $H_\Delta^{\;\nabla} |_{x} $ is 
also a nonempty subset of $H^{\nabla} |_{x} $. By Definitions 2.2.1 and 2.2.2 of \cite{Joyce}, the 
inverse map of $(P^{\nabla})_0^1 (\gamma)$ is $(P^{\nabla})_0^1 (\gamma^{-1})$ and $(P^{\nabla})_0^1 (\delta) \circ (P^{\nabla})_0^1 (\gamma)$ is equal to $(P^{\nabla})_0^1 (\delta \cdot \gamma)$, for 
any $\gamma: [0,1] \rarrow M$ and $\delta: [0,1] \rarrow M$ belonging to $ \Omega_{\Delta} (x)$. Thus, 
we get the first statement. Next, taking into account the fact that $\Delta$ is completely 
controllable, one deduces the rest of the proposition.
\end{proof}

%%%%%%%%%%%%%%%%%%%%%%%%%%%%%%%%%%%%%%%%%%%%%%%%%%%%%%%%%%%%%%%%%%%%%%%

\begin{remark}\label{rieman0}
If $g$ is a Riemannian metric on the smooth manifold $M$ and $\nabla^g$ is the Levi-Civita connection associated to $g$, then the holonomy group $H^{\nabla^g} |_{x}$ with $x \in M$ is a subgroup of $O (T_{x} M)$, the set of $g$-orthogonal transformations of $T_{x} M$. If, moreover, $M$ is oriented, one can easily prove that $H^{\nabla^g} |_{x}$ is a subgroup of $\SO(T_{x} M)$. One can then define the holonomy group of $\nabla^g$ as a subgroup of $O(n)$ ($SO(n)$ respectively) the group of orthogonal transformations of the euclidean $n$-dimensional space (the subgroup of $O(n)$ with determinant equal to one if $M$ is oriented respectively).
\end{remark}

\subsection{Holonomy groups associated with distributions using the framework of rolling manifolds}\label{generalrolling}
%%%%%%%%%%%%%%%%%%%%%%%%%%%%%%%%%%%%%%%%%%%%%%%%%%%%%%%%%%%%%

Let $M$ be a smooth $n$-dimensional manifold and $\nabla$ a connection on $M$.
Set $(\hM, \hnabla) := (\mathbb{R}^n,\hnabla^n)$ where $\hnabla^n$ is the Euclidean connection on $\R^n$. We associate to $(M,\nabla)$ the curvature tensor $R^\nabla$ and to the product manifold $ (M,\nabla) \times (\R^n, \hnabla^n) $ the affine connection $\nablabar$.

%%%%%%%%%%%%%%%%%%%%%%%%%%%%%%%%%%%%%%%%%%%%%%%%%%%%%%%%%%%%%%%%%%%%%%%%
\subsubsection{Affine Holonomy Group of $M$}

We recall next basic definitions and results stated in \cite{ChitourKokkonen}  and \cite{Kokkonen}.

\begin{definition}
The state space of the development of $(M,\nabla)$ on $(\R^n,\hnabla^n)$ is
$$
Q:= Q(M,\mathbb{R}^n) =\{A \in T^*_x M \otimes \mathbb{R}^n \mid A \in GL (T_x M), \; x \in M\}.
$$
A point $q \in Q$ is written as $q=(x,\hx;A)$. Note that the word ``development'' can also be replaced by ``rolling''.
\end{definition}

\begin{definition}
Let $\gamma:[0,1] \rarrow M$ be an a.c. curve on $M$ starting at $\gamma (0) = x_0$. We define the development of $\gamma$ on $T_{x_0} M$ with respect to $\nabla$ as the a.c. curve $\Lambda^{\nabla}_{x_0} (\gamma) : [0,1] \rarrow T_{x_0} M$
$$
\Lambda^{\nabla}_{x_0} (\gamma) (t) = \int_0^t (P^{\nabla})_s^0 (\gamma) \dot{\gamma} (s) ds, \quad \quad t \in [0,1].
$$
\end{definition}

The following result can be found from \cite{KobayashiNomizu}.
\begin{proposition}
Let $\nabla$ be the Levi-Civita connection of a Riemannian metric $g$.
Then for any a.c. curve $\Gamma:[0,1]\to T_{x_0} M$
there exists a maximal $T=T(\Gamma)$ such that $0<T\leq 1$
and an a.c. curve $\gamma:[0,T]\to M$ satisfying
\[
\Lambda^{\nabla}_{x_0} (\gamma) (t)=\Gamma(t),\quad \forall t\in [0,T].
\]
Moreover, one can take $T=1$ for all such $\Gamma$s if and only if $(M,g)$ is complete.
\end{proposition}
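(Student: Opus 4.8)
The plan is to turn the identity $\Lambda^\nabla_{x_0}(\gamma)=\Gamma$ into a controlled ordinary differential equation on the orthonormal frame bundle of $(M,g)$ and then extract both assertions from Carath\'eodory's existence theory combined with the Hopf--Rinow theorem. Differentiating the defining relation and recalling that $(P^\nabla)_s^0(\gamma)$ is the inverse of $(P^\nabla)_0^s(\gamma)$ yields the equivalent equation $\dot\gamma(t)=(P^\nabla)_0^t(\gamma)\,\dot\Gamma(t)$. Fix a $g$-orthonormal frame $u_0\colon\R^n\to T_{x_0}M$ and set $u(t):=(P^\nabla)_0^t(\gamma)\circ u_0$, the $\nabla$-parallel transport of $u_0$ along $\gamma$; since $\nabla=\nabla^g$ is metric, each $u(t)$ is again a linear isometry, so $(\gamma(t),u(t))$ is a curve in the orthonormal frame bundle $\mathcal F$ of $(M,g)$, whose typical fibre is the compact group $\mathrm{O}(n)$. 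Writing the a.c. curve $\xi:=u_0^{-1}\circ\Gamma\colon[0,1]\to\R^n$, the equation becomes $\dot\gamma(t)=u(t)\dot\xi(t)$ together with the horizontality of $u$; equivalently, $(\gamma,u)$ is the integral curve issued from $(x_0,u_0)$ of the time-dependent standard horizontal vector field $B(\dot\xi(t))$ on $\mathcal F$.

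First I would settle existence, uniqueness and maximality. Since $\dot\xi\in L^1$, the field $(x,u)\mapsto B(\dot\xi(t))|_{(x,u)}$ is smooth in the state, measurable and locally $L^1$ in $t$, hence satisfies the Carath\'eodory hypotheses; standard continuation theory then produces a unique maximal a.c. solution on an interval with right endpoint $T=T(\Gamma)\in(0,1]$, whose projection $\gamma$ satisfies $\Lambda^\nabla_{x_0}(\gamma)(t)=\Gamma(t)$ for $t<T$ by construction. The quantitative key is that, each $u(t)$ being a linear isometry, $\n{\dot\gamma(t)}_g=\n{\dot\xi(t)}=\n{\dot\Gamma(t)}_{g_{x_0}}$; hence the $g$-length of $\gamma$ on $[0,t]$ equals the length of $\Gamma$ there and is bounded above by the finite total length $R$ of $\Gamma$ on $[0,1]$.

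For the ``if'' direction, suppose $(M,g)$ is complete. Then $\gamma$ stays in the closed metric ball $\overline{B}(x_0,R)$, which is compact by Hopf--Rinow; as the fibre $\mathrm{O}(n)$ is compact, the whole lift $(\gamma,u)$ remains in the compact set $\mathcal F|_{\overline{B}(x_0,R)}$. Because a maximal Carath\'eodory solution with $T<1$ must eventually leave every compact subset of the state space, this forces $T=1$ for every a.c. $\Gamma$, and the solution extends to the closed interval $[0,1]$. Conversely, if $(M,g)$ is incomplete then, by Hopf--Rinow, $\exp^\nabla_{x_0}$ is not defined on all of $T_{x_0}M$, so there is a unit $v\in T_{x_0}M$ and $a<\infty$ with $\sigma(t)=\exp^\nabla_{x_0}(tv)$ maximally defined on $[0,a)$. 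Since a geodesic has parallel velocity, its development is the straight ray, so choosing $L>a$ and $\Gamma(t):=Lt\,v$ makes the (unique) anti-development equal to the constant-speed geodesic $t\mapsto\sigma(Lt)$, which exists only for $Lt<a$, i.e. up to $T=a/L<1$. Hence not every $\Gamma$ reaches $T=1$, which is the contrapositive.

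The main obstacle I expect is the careful bookkeeping at the endpoint $T$: one must justify the escape dichotomy for the non-autonomous Carath\'eodory equation and note that, in the incomplete case, the finite-length curve $\gamma$ is Cauchy yet has no limit in $M$, so the supremum $T$ is genuinely not attained there. Using the orthonormal (rather than the full linear) frame bundle is what makes this clean, since it confines the fibre component to the compact $\mathrm{O}(n)$ and thereby reduces non-escape of the lift to non-escape of the base curve, which the length bound and Hopf--Rinow control.
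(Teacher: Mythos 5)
Your proof is correct and follows the classical route --- horizontal lift of the time-dependent standard horizontal vector field on the orthonormal frame bundle, Carath\'eodory existence and the escape-from-compacta dichotomy, and Hopf--Rinow --- which is precisely the argument in Kobayashi--Nomizu that the paper cites in lieu of giving a proof. The only point worth keeping visible is the endpoint convention you already flag: in the incomplete case the maximal interval of existence is half-open, so the ``maximal $T$'' of the statement must be read as a supremum rather than an attained closed-interval endpoint.
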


By identification of $T^*_x M \otimes \mathbb{R}^n$ as the space of all $\R$-linear maps from the tangent space $T_x M$ at $x \in M$ onto the tangent space of $\R^n$ at $\hx \in \R^n$, one gets the following definitions. 

\begin{definition}
Let $(x_0,\hx_0) \in M \times \R^N$, $A_0 \in T^*_{x_0} M \otimes \mathbb{R}^n$ and an a.c. curve $\gamma: [0,1] \rarrow M$ such that $\gamma (0) = x_0$. We define the development of $\gamma$ onto $\R^n $ through $A_0$ with respect to $\nablabar$ as the a.c. curve $\Lambda^{\nablabar}_{A_0} (\gamma) : [0,T] \rarrow M$ given by
\[
\Lambda^{\nablabar}_{A_0} (\gamma) (t) : = (\Lambda^{\hnabla^n}_{\hx_0} )^{-1} (A_0 \circ \Lambda^{\nabla}_{x_0} (\gamma)) (t), \quad \quad t \in [0,T]
\]
with $T=T(\gamma)$ as in the previous definition.

We also define the relative parallel transport of $A_0$ along $ \gamma$ with respect to $\nablabar$ to be the linear map
\begin{align*}
&(P^{\nablabar})_0^t (\gamma) A_0: T_{\gamma (t)} M \rarrow T_{\Lambda^{\nablabar}_{A_0} (\gamma) (t)} \hM, \text{ such that for $t \in [0,1]$},\\
& (P^{\nablabar})_0^t (\gamma) A_0 := (P^{\hnabla^n})_0^t (\Lambda^{\nablabar}_{A_0} (\gamma) ) \circ A_0 \circ (P^{\nabla})_t^0 (\gamma)  =  A_0 \circ (P^{\nabla})_t^0 (\gamma).
\end{align*}
\end{definition}

We define the No-Spinning development lift of $(X,\hat{X}) \in T_{(x,\hx)} (M \times \R^n)$, the Rolling development lift and the Rolling development distribution of $X \in T_x M$ respectively as follows.

\begin{definition}
Let $\q \in Q$, $(X,\hat{X}) \in T_{(x,\hx)} (M \times \R^n)$ and $\gamma$ (resp. $\hgamma$) be an a.c. curve on $M$ (resp. on $\R^n$) starting at $x$ (resp. $\hx$) with initial velocity $X$ (resp. $\hX$). The No-Spinning development lift of $(X,\hat{X})$ is the unique vector $\lns(X, \hat{X}) |_{q}$ of $T_q Q$ at $\q$ given by
$$
\lns(X, \hX)|_q := \frac{d}{dt} \big|_0 (P^{\hnabla^n})_0^t (\hgamma) \circ A \circ (P^{\nabla})_{t}^0 (\gamma) = \frac{d}{dt} \big|_0 A \circ (P^{\nabla})_{t}^0 (\gamma).
$$
If, moreover, the initial velocity of $\hgamma$ is $AX$, then we define the Rolling lift $\lr $ at $\q \in Q$ to be the injective map from $T_x M$ onto $T_q Q$, such that for every $X \in T_{x}M$,
$$
\lr(X)|_q:= \lns (X, AX) |_{q} = \frac{d}{dt} \big|_0 (P^{\hnabla^n})_0^t (\hgamma) \circ A \circ (P^{\nabla})_{t}^0 (\gamma) = \frac{d}{dt} \big|_0 A \circ (P^{\nabla})_{t}^0 (\gamma).
$$
The Rolling distribution $\dr$ at $\q \in Q$ is an $n$-dimensional smooth distribution defined by
$$
\dr |_{q} := \lr (T_{x} M) |_{q}.
$$
\end{definition}

We say that an a.c. curve $t \mapsto q(t)=(\gamma(t), \hgamma(t); A(t))$ on $Q$, is a rolling curve if and only if it is tangent to $\dr$ for a.e. $t\in I$, where $I$ is a bounded interval of $\mathbb{R}$, i.e. if and only if $\dot{q} (t) = \lr (\dot{\gamma} (t)) |_{q(t)}$ for a.e. $t \in I$.
For the proof of next proposition, see \cite{ChitourKokkonen,KobayashiNomizu}.

\begin{proposition}\label{p3.1}
For any $q_0 := (x_0 , \hx_0 ; A_0) \in Q$ and any a.c. curve $\gamma : [0,1] \rarrow M$ starting at $x_0$, there exist  unique a.c. curves $ \hgamma (t) := \Lambda^{\nablabar}_{A_0} (\gamma) (t) $ and $A (t) := (P^{\nablabar})_0^t (\gamma) A_0$ such that $ A (t) \dot{\gamma} (t) = \dot{\hgamma} (t)$ and $\nablabar_{(\dot{\gamma} (t), \dot{\hgamma} (t))} A(t) = 0$, for all $t \in [0,T]$,
for a maximal $T=T(\gamma)$ such that $0<T\leq 1$.
We refer to $t \mapsto q_{\dr} (\gamma,q_0) : = (\gamma (t), \hgamma (t); A (t))$ as the rolling  curve along $\gamma$ with initial position $q_0$.

Moreover, if $(\hat{M},\hat{g})$ is a complete Riemannian manifold and $\hat{\nabla}$
is the corresponding Levi-Civita connection,
then one may take above $T=1$ for all $\gamma$'s.
\end{proposition}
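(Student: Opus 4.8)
\emph{The plan} is to disentangle the two rolling conditions and to build $\hgamma$ and $A(t)$ separately, observing that the only place where a maximal time $T<1$ can occur is the anti-development into $\hM$, which is precisely where completeness of $(\hM,\hg)$ enters. Everything on the $M$-side is linear and globally defined on $[0,1]$.

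First I would reduce the no-spin condition to relative parallel transport. A section $A(t)\in T^*_{\gamma(t)}M\otimes T_{\hgamma(t)}\hM$ along the curve $(\gamma,\hgamma)$ in $M\times\hM$ satisfies $\nablabar_{(\dot\gamma,\dot{\hgamma})}A=0$ if and only if it carries $\nabla$-parallel fields along $\gamma$ to $\hnabla$-parallel fields along $\hgamma$; equivalently $A(t)=(P^{\hnabla})_0^t(\hgamma)\circ A_0\circ (P^{\nabla})_t^0(\gamma)$, which is exactly the relative parallel transport $(P^{\nablabar})_0^t(\gamma)A_0$. Thus, once $\hgamma$ is known, the no-spin condition determines $A(t)$ uniquely and produces the claimed formula. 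Next I would show that, with this expression for $A(t)$, the no-slip condition $A(t)\dot\gamma(t)=\dot{\hgamma}(t)$ is equivalent to $\hgamma$ being an anti-development. Set $\Gamma(t):=A_0\circ\Lambda^{\nabla}_{x_0}(\gamma)(t)\in T_{\hx_0}\hM$. Since $\nabla$-parallel transport along $\gamma$ solves a linear ODE, $\Lambda^{\nabla}_{x_0}(\gamma)$ and hence $\Gamma$ are defined on all of $[0,1]$ with $\dot\Gamma(t)=A_0\,(P^{\nabla})_t^0(\gamma)\dot\gamma(t)$. Differentiating the defining integral for $\Lambda^{\hnabla}_{\hx_0}(\hgamma)$ shows that $\Lambda^{\hnabla}_{\hx_0}(\hgamma)=\Gamma$ is equivalent to $(P^{\hnabla})_t^0(\hgamma)\dot{\hgamma}(t)=\dot\Gamma(t)$, i.e. to $\dot{\hgamma}(t)=(P^{\hnabla})_0^t(\hgamma)\,A_0\,(P^{\nabla})_t^0(\gamma)\dot\gamma(t)=A(t)\dot\gamma(t)$. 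Hence no-slip holds precisely when $\hgamma=(\Lambda^{\hnabla}_{\hx_0})^{-1}(\Gamma)$.

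I would then invoke the development proposition recalled above, applied to the a.c. curve $\Gamma:[0,1]\to T_{\hx_0}\hM$: since $\hnabla$ is the Levi-Civita connection of $\hg$, there is a maximal $T\in(0,1]$ and an a.c. curve $\hgamma:[0,T]\to\hM$ with $\Lambda^{\hnabla}_{\hx_0}(\hgamma)=\Gamma$ on $[0,T]$, unique because the development map is injective. Defining $A(t)$ by the relative-parallel-transport formula then gives a solution of both rolling equations on $[0,T]$, the verification being the differentiation above together with the composition-of-parallel-transports identity used to check $A(t)\dot\gamma(t)=\dot{\hgamma}(t)$. Uniqueness of the rolling curve follows at once: no-spin forces the formula for $A(t)$, and then no-slip forces $\hgamma$ to be the (unique) anti-development of $\Gamma$. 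For the moreover clause, completeness of $(\hM,\hg)$ is exactly the hypothesis under which the development proposition yields $T=1$ for every tangent-space curve, hence $T=1$ for every $\gamma$.

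The main obstacle is conceptual rather than computational: one must recognize that the maximal time $T$ is governed entirely by the anti-development $(\Lambda^{\hnabla}_{\hx_0})^{-1}$ into $\hM$. The $M$-side constructions, namely parallel transport along $\gamma$ and the development $\Lambda^{\nabla}_{x_0}(\gamma)$ into the fixed vector space $T_{x_0}M$, are linear and defined on all of $[0,1]$, so no completeness of $M$ is required. Identifying the anti-development equation as a geodesic-type flow on the frame bundle of $\hM$, whose solutions extend as long as $\hgamma$ remains in $\hM$, is the crux; it is precisely this extendability that completeness guarantees, thereby forcing $T=1$.
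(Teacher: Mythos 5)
Your proposal is correct and takes essentially the same route as the paper, which gives no inline argument but defers to \cite{ChitourKokkonen,KobayashiNomizu} after setting up, in the definitions immediately preceding the proposition, exactly your construction: $\hgamma:=\Lambda^{\nablabar}_{A_0}(\gamma)=(\Lambda^{\hnabla}_{\hx_0})^{-1}\bigl(A_0\circ\Lambda^{\nabla}_{x_0}(\gamma)\bigr)$ and $A(t):=(P^{\hnabla})_0^t(\hgamma)\circ A_0\circ (P^{\nabla})_t^0(\gamma)$. Your two reductions (no-spin forces the relative parallel transport formula for $A(t)$; no-slip is then equivalent to the anti-development equation for $\hgamma$) together with the recalled Kobayashi--Nomizu development proposition, which governs the maximal $T$ and yields $T=1$ under completeness of $(\hM,\hg)$, constitute precisely the intended proof, including the correct observation that the $M$-side parallel transport is a linear ODE defined on all of $[0,1]$.
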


Consider the smooth bundle $\pq: Q \rarrow M \times \R^n$ and $q \in Q$. We define $V |_{q} (\pq)$ to be the set of all $B \in T |_{q} Q$ such that the tangent application $(\pq)_{*} (B) = 0$. The collection of spaces $V |_{q} (\pq)$, $q \in Q$ defines a smooth submanifold $V (\pq)$ of $TQ$. We will write an element of $V |_{q} (\pq)$ at $\q \in Q$ as $\nu (B) \onq$ where $B \in T^\ast_x  M \otimes \R^n $ verifies $B \in A \; \mathfrak{so}(T_x M)$. Intrinsically, to know what it means to take the derivative with respect to $\nu (B) \onq$.
Then, for all smooth maps $f$ defined on (an open subset of) $Q$
with values in the manifold of $(m,k)$-tensors of $M$,
we define
\centerline{
$\nu (B) \mid_{q} (f):= \frac{d}{dt} |_0 f(q+tB),$
}
that we call the vertical derivative of $f$ at $q$ in the direction of $B$.

We next present the main computation tools obtained in Proposition 3.7, Lemma 3.18, Proposition 3.24, Proposition 3.26, Proposition 4.1, Proposition 4.6 \cite{Kokkonen2}.

\begin{proposition}
Let $\Oh \subset T^* M \otimes \mathbb{R}^n$ be an immersed submanifold, $\overline{Z} = (Z , \hZ)$, $\sbar = (S, \hS) \in C^{\infty} (\pi_{\Oh}, \pi_{T^* M \otimes \mathbb{R}^n})$ be such that for all $\q \in \Oh$, $\lns (\overline{Z} (q)) |_{q}$, $\lns (\sbar (q)) |_{q} \in T_{q} \Oh$ and $U$, $V \in C^{\infty} (\pi_{\Oh}, \pi_{T^* M \otimes \mathbb{R}^n})$, be such that for all $\q \in \Oh$, $\nu (U(q)) \onq$, $\nu(V(q)) \onq \in T_{q} \Oh$. Then, one has
\begin{eqnarray}\label{e3.23}
\leftline{
$
\lns(\overline{Z} (A)) |_{q} \sbar (\cdot) := \nablabar_{\overline{Z} (A)} (\sbar(A)),
$
}
\end{eqnarray}
\begin{eqnarray}\label{e3.12}
\leftline{
$
\begin{array} {rcl}
[\lns(\overline{Z} (\cdot)), \lns(\sbar(\cdot))] \onq& = & \lns (\lns(\overline{Z} (A)) \onq \sbar (\cdot) - \lns(\sbar (A)) \onq \overline{Z} (\cdot)) \onq \\
\\
&   - & \lns (T^{\nabla} (Z,S)) \onq + \nu (A R^{\nabla}(Z, S)) \onq,
\end{array}
$
}
\end{eqnarray}
\begin{eqnarray}\label{e3.33}
\leftline{
$
[\lr(Z), \lr(S)] \onq = \lr([Z,S]) \onq + \lns (A T^{\nabla} (Z,S)) \onq + \nu (A R^{\nabla}(Z(q), S(q))) \onq,
$
}
\end{eqnarray}
\begin{eqnarray}\label{e3.13}
\leftline{
$
[\lns(\overline{Z} (\cdot)), \nu(U(\cdot))] \onq  = - \lns (\nu(U (A)) \onq \overline{Z} (\cdot)) \onq + \nu (\lns( \overline{Z} (A)) \onq U (\cdot)) \onq,
$
}
\end{eqnarray}
\begin{eqnarray}\label{e3.14}
\leftline{
$
[\nu(U(\cdot)) , \nu(V(\cdot))] \onq = \nu (\nu(U(A)) \onq V - \nu(V(A)) \onq U)\onq.
$
}
\end{eqnarray}
Both sides of the equalities in \eqref{e3.23}, \eqref{e3.12}, \eqref{e3.33}, \eqref{e3.13} and \eqref{e3.14} are tangent to $\Oh$.
\end{proposition}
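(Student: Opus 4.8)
The five identities are nothing but the structure equations of the product connection $\nablabar$ on the bundle $\pq:Q\rarrow M\times\R^n$, expressed through the no-spinning lift $\lns$ and the vertical derivative $\nu$. My plan is to prove the foundational identity \eqref{e3.23} first, and then to deduce the four bracket formulas from it by decomposing every vector field on $Q$ into its $\pq$-horizontal (i.e. $\lns$-lifted) part and its vertical part $\nu(\cdot)$. The sections $\overline{Z}=(Z,\hZ)$ and $\sbar=(S,\hS)$ play the role of $T(M\times\R^n)$-valued maps on $\Oh$ (feeding $\lns$), while $U,V$ are $T^*M\otimes\R^n$-valued (feeding $\nu$).

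First I would establish the interpretation \eqref{e3.23}. By construction, $\lns(\overline{Z})\onq$ is the $\nablabar$-horizontal lift of $\overline{Z}$: along the no-spinning curve $q(t)=(\gamma(t),\hgamma(t);A(t))$ issuing from $q$ with base velocity $\overline{Z}$, the frame $A(t)$ is $\nablabar$-parallel, by Proposition~\ref{p3.1}. Applying the derivation $\lns(\overline{Z}(A))\onq$ to the $T(M\times\R^n)$-valued map $\sbar$ thus amounts to differentiating $t\mapsto\sbar(q(t))$ at $t=0$; using the characterization $\nabla_X W=\frac{d}{dt}\big|_0 (P^{\nabla})_t^0(\gamma)W(\gamma(t))$ of the covariant derivative together with the flatness of $\hnabla^n$ (so that parallel transport in the $\R^n$-slot is the identity), this derivative is exactly $\nablabar_{\overline{Z}(A)}(\sbar(A))$, which is \eqref{e3.23}.

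Next I would carry out the bracket computations in a local frame. Choosing coordinates $(x^i)$ on $M$ with Christoffel symbols $\Gamma^k_{ij}$, the standard coordinates on $\R^n$, and trivializing $Q$ by the matrix entries $A^a_i$ of $A$, parallelism of $A(t)$ gives the explicit expressions
\[
\lns(\overline{Z})\onq = Z^i\partial_{x^i}+\hZ^a\partial_{\hx^a}+\Gamma^j_{ki}Z^kA^a_j\,\partial_{A^a_i},\qquad \nu(B)\onq=B^a_i\,\partial_{A^a_i}.
\]
From these, \eqref{e3.14} is immediate since the verticals only involve $\partial_{A^a_i}$, so the bracket collapses to the vertical derivatives $\nu(U)V-\nu(V)U$ of the coefficients. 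Identity \eqref{e3.13} follows from a single bracket computation in which the Christoffel term of $\lns(\overline{Z})$ meets $\nu(U)$: its vertical output reorganizes into $\nu(\lns(\overline{Z})U)$ (the covariant derivative supplying the $-\Gamma ZU$ correction) minus the fiber part of $\lns(\nu(U)\overline{Z})$. For \eqref{e3.12}, the base components of $[\lns(\overline{Z}),\lns(\sbar)]$ reproduce the Lie bracket of the projections, which \eqref{e3.23} lets me rewrite as the $\lns$ of $\lns(\overline{Z}(A))\sbar-\lns(\sbar(A))\overline{Z}$ after subtracting the antisymmetrized Christoffel terms, that is $-\lns(T^{\nabla}(Z,S))$; the fiber components collect the combination $\partial\Gamma-\partial\Gamma+\Gamma\Gamma-\Gamma\Gamma$, which is precisely $AR^{\nabla}(Z,S)$, giving the term $\nu(AR^{\nabla}(Z,S))$. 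Finally \eqref{e3.33} is the specialization $\hZ=AZ$, $\hS=AS$; here the only new point is that differentiating the $\R^n$-component $A^a_jS^j$ of $\lr(S)$ along $\lr(Z)$ and antisymmetrizing yields $A[Z,S]+AT^{\nabla}(Z,S)$, so that converting back to $\lr([Z,S])$ leaves the extra summand $\lns(AT^{\nabla}(Z,S))$.

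The tangency claim is then clear: by hypothesis $\lns(\overline{Z}),\lns(\sbar),\nu(U),\nu(V)$ are tangent to $\Oh$, each right-hand side is again a lift or a vertical derivative of a section of the pulled-back bundle and hence tangent to $\Oh$, and the Lie bracket of vector fields tangent to $\Oh$ is tangent to $\Oh$. The part I expect to be most delicate is the bookkeeping of the base/fiber splitting together with the correct signs of the torsion and curvature contributions: because $\overline{Z},\sbar,U,V$ are genuine functions of the full point $q$ (including the fiber variable $A$), one must scrupulously separate the geometric, Christoffel-driven terms from the section-variation terms $\lns(\overline{Z})\sbar$, $\nu(U)\overline{Z}$, and it is exactly this separation that produces the compact covariant form of the identities rather than the bare Lie brackets.
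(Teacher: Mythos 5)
Your proposal is sound, but note first that the paper itself offers no proof of this proposition: it is imported verbatim as a collection of ``computation tools'' from Proposition 3.7, Lemma 3.18, Propositions 3.24, 3.26, 4.1 and 4.6 of \cite{Kokkonen2}, where the identities are derived invariantly by differentiating one-parameter families of parallel transports and developments. So there is nothing in the paper to match your argument against line by line; what you have written is a self-contained alternative route via a local trivialization. That route is correct in outline: the coordinate expression $\lns(\overline{Z})\onq=Z^i\partial_{x^i}+\hZ^a\partial_{\hx^a}+\Gamma^j_{ki}Z^kA^a_j\partial_{A^a_i}$ follows from $\nablabar_{(\dot\gamma,\dot{\hat\gamma})}A=0$, the identification \eqref{e3.23} of the lifted derivative of a section with its covariant derivative is the right starting point, \eqref{e3.14} is indeed immediate, and your derivations of \eqref{e3.12} and \eqref{e3.33} correctly locate the torsion in the antisymmetrized Christoffel part of the base component (via $[Z,S]=\nabla_ZS-\nabla_SZ-T^{\nabla}(Z,S)$) and the curvature in the $\partial\Gamma+\Gamma\Gamma$ combination of the fiber component, with \eqref{e3.33} then following from \eqref{e3.12} by substituting $\hZ=AZ$, $\hS=AS$ and using $\lr(W)=\lns(W)+\lns(AW)$. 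What your write-up does not do is actually display the computations for \eqref{e3.12} and \eqref{e3.13}; these are the only places where errors could realistically hide (the sign of the curvature term, the index order in $\Gamma^j_{ki}$ when the connection has torsion, and the separation of the $A$-dependence of the sections $\overline{Z},\sbar,U,V$ from the geometric Christoffel terms --- a point you correctly flag as delicate). As a proof sketch it is acceptable and arguably more elementary than the invariant derivation in \cite{Kokkonen2}; to stand as a complete proof it would need those two bracket computations written out. The final tangency assertion is fine: the hypotheses place all four vector fields tangent to the immersed submanifold $\Oh$, hence their brackets are tangent to $\Oh$, and each right-hand side equals such a bracket.
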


We use $\Aff (M)$ to denote the affine group of all invertible affine transformations from the affine manifold $M$ onto itself. In particular, the affine group of $\R^n$ is denoted by $\Aff(n)$. One can extend readily  Proposition 3.10 of \cite{ChitourKokkonen2} to get the following result.

\begin{proposition}\label{p3.9}
For any $f\in \Aff (M)$, $\hf \in \Aff (n)$ and any $\qz \in Q$, define the following smooth right and left actions of $\Aff (M)$ and $\Aff(n)$ on $Q$
$$
q_0 \cdot f := (f^{-1} (x_0), \hx_0; A_0 \circ f_\star |_{f^{-1} (x_0)} ), \quad \quad  \hf \cdot q_0 := (x_0, \hf( \hx_0) ; \hf_{\star} |_{\hx_0} \circ A_0).
$$
Then, for any a.c. curve $\gamma : [0,1] \rarrow M$ starting at $x_0$, one has for a.e. $t \in [0,1]$
$$
\hf \cdot q_{\dr} (\gamma, q_0) (t) \cdot f = q_{\dr} (f^{-1} \circ \gamma , \hf \cdot  q_0 \cdot f) (t).
$$
\end{proposition}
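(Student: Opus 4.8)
The plan is to rely on the uniqueness of rolling curves furnished by Proposition~\ref{p3.1}. Write the rolling curve as $q_{\dr}(\gamma,q_0)(t)=(\gamma(t),\hgamma(t);A(t))$ and compute the left-hand side by applying first the right action by $f$ and then the left action by $\hf$ as defined above. Setting $\beta:=f^{-1}\circ\gamma$, this gives
\[
\hf\cdot q_{\dr}(\gamma,q_0)(t)\cdot f = \bigl(\beta(t),\, \hf(\hgamma(t));\, \hf_\star|_{\hgamma(t)}\circ A(t)\circ f_\star|_{\beta(t)}\bigr).
\]
Denote this curve by $(\beta(t),\hat{\beta}(t);B(t))$, where $\hat{\beta}=\hf\circ\hgamma$ and $B(t)=\hf_\star|_{\hgamma(t)}\circ A(t)\circ f_\star|_{\beta(t)}$. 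Since $(\R^n,\hnabla^n)$ is complete, the completeness clause of Proposition~\ref{p3.1} ensures $T=1$ for every base curve involved, so by uniqueness it suffices to show that $(\beta,\hat{\beta};B)$ starts at $\hf\cdot q_0\cdot f$ and is a rolling curve along $\beta$, i.e. that it satisfies the no-slipping relation $B(t)\dot\beta(t)=\dot{\hat{\beta}}(t)$ and the no-spinning relation $\nablabar_{(\dot\beta,\dot{\hat{\beta}})}B(t)=0$.

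The initial value $\hf\cdot q_0\cdot f$ is read off directly from the two actions. For the no-slipping relation I would differentiate $\beta=f^{-1}\circ\gamma$ and $\hat{\beta}=\hf\circ\hgamma$, obtaining $\dot\beta(t)=(f_\star|_{\beta(t)})^{-1}\dot\gamma(t)$ and $\dot{\hat{\beta}}(t)=\hf_\star|_{\hgamma(t)}\dot{\hgamma}(t)$; substituting into $B(t)\dot\beta(t)$, the pair $f_\star|_{\beta(t)}\circ(f_\star|_{\beta(t)})^{-1}$ cancels, so the claim collapses to the no-slipping relation $A(t)\dot\gamma(t)=\dot{\hgamma}(t)$ of the original rolling curve $q_{\dr}(\gamma,q_0)$.

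The main point is the no-spinning relation. By the formula for relative parallel transport recalled above together with the triviality of Euclidean parallel transport, this relation is equivalent to $B(t)=B(0)\circ(P^{\nabla})_t^0(\beta)$. Because $\hf$ is affine on $\R^n$ its differential is constant, so $\hf_\star|_{\hgamma(t)}=\hf_\star|_{\hx_0}$; after inserting $A(t)=A_0\circ(P^{\nabla})_t^0(\gamma)$ the constant factor $\hf_\star|_{\hx_0}$ and the invertible $A_0$ cancel on both sides, and the identity reduces to
\[
(P^{\nabla})_t^0(\gamma)\circ f_\star|_{\beta(t)} = f_\star|_{f^{-1}(x_0)}\circ(P^{\nabla})_t^0(\beta).
\]
This is exactly the affineness of $f$ expressed by \eqref{e3.32}: writing that relation for the curve $\beta$ and its image $\gamma=f\circ\beta$ at time $t$ yields $(P^{\nabla})_0^t(\gamma)=f_\star|_{\beta(t)}\circ(P^{\nabla})_0^t(\beta)\circ(f_\star|_{\beta(0)})^{-1}$, and inverting both sides gives the displayed identity since $\beta(0)=f^{-1}(x_0)$. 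I expect the only genuinely delicate point to be this last step, namely keeping track of the reversed transports $(P^{\nabla})_t^0$ versus $(P^{\nabla})_0^t$ and of the base points at which $f_\star$ is evaluated; once these are aligned, uniqueness of the rolling curve gives the asserted equality for all $t\in[0,1]$, and in particular for a.e. $t$.
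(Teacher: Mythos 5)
Your proof is correct and follows essentially the same route as the paper: the paper's proof simply invokes the affineness identity \eqref{e3.32} and then defers the remaining verification to Proposition 3.10 of \cite{ChitourKokkonen2}, and what you have written out (checking the initial condition, the no-slip relation, and the no-spin relation for the transformed curve, then concluding by uniqueness from Proposition~\ref{p3.1}) is precisely that delegated argument carried out explicitly, with the base points and transport directions handled correctly.
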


\begin{proof}
By the definition of an affine transformation $f$ on $M$, we have Eq. \eqref{e3.32} for any a.c. curve $\gamma :[0,1] \rarrow M$. This implies that, for a.e. $t \in [0,1]$
$$
f_\star |_{\gamma (t)} \circ (P^\nabla)_0^t (\gamma) = (P^\nabla)_0^t (f \circ \gamma) \circ f_\star  |_{\gamma (0)} .
$$
We have the same conclusion for affine transformations $\hf$ on $\R^n$. Then, since $\Aff (n)$ is a Lie group and by what precedes, one can repeat the steps of the proof of Proposition 3.10 in \cite{ChitourKokkonen2} with the group $\Aff (n)$ instead of isometry groups on $M$ and $\R^n$ to get the claim.
\end{proof}

Recall that if $G$ is a Lie group, then a smooth bundle $\pi: E \rarrow M$ is a principal $G$-bundle over $M$ if there exists a smooth and free action of $G$ on $E$ which preserves the fibers of $\pi$,  cf. \cite{Joyce}. Furthermore, we recall that the affine group $\Aff(n)$ is equal to $\mathbb{R}^n \rtimes GL(n)$ and its product group $\diamond$ is given by
$$
(v,L) \diamond (u , K) := (Lu+v, L \circ K).
$$

Using the previous proposition, one can extend immediately the simple but crucial Proposition 4.1 in \cite{ChitourKokkonen2} to derive the next result.

\begin{proposition}\label{p3.3}
The bundle $\pi_{Q,M}: Q \rarrow M$ is a principal $\Aff(n)$-bundle with the left action $\mu: \Aff(n) \times Q \rarrow Q$;
$$
\mu((\hy,C), (x,\hx;A)) = (x, C \hx + \hy ; C \circ A).
$$
The action $\mu$ preserves $\dr$, i.e. for any $q \in Q$ and $B \in \Aff(n)$, we have $(\mu_B)_{\ast} \dr |_q = \dr |_{\mu_B (q)}$ where $\mu_B :Q \rarrow Q$; $q \mapsto \mu(B,q)$. Moreover, for any $\q \in Q$, there exists a unique subgroup $\mathcal{H}^\nabla_q$ of $\Aff(n)$, called the affine holonomy group of $(M,\nabla)$ verifying
\[
\mu (\mathcal{H}^\nabla_q \times \{q\}) = \odr (q) \cap \pi_{Q,M}^{-1} (x).
\]
If $q' = (x,\hx';A') \in Q$ belongs to the same $\pi_{Q,M}$-fiber as $q$, then $\mathcal{H}^\nabla_q$ and $\mathcal{H}^\nabla_{q'}$ are conjugate in $\Aff(n)$ and all conjugacy classes of $\mathcal{H}^\nabla_q$ are of the form $\mathcal{H}^\nabla_{q'}$. This conjugacy class is denoted by $\mathcal{H}^\nabla$ and its projection in $GL(n)$ is equal to $H^{\nabla}$ the holonomy group of the affine connection $\nabla$. 
\end{proposition}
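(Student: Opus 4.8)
The plan is to translate each assertion of Proposition~\ref{p3.3} into a statement about the action $\mu$ and the rolling curves, deducing everything from Proposition~\ref{p3.9} together with the orbit structure of $\dr$, exactly as in the extension of Proposition~4.1 of \cite{ChitourKokkonen2}. First I would verify that $\mu$ is a smooth left action: writing $B_i=(\hy_i,C_i)$, a direct computation gives
\[
\mu(B_1,\mu(B_2,q)) = (x,\, C_1C_2\hx + C_1\hy_2+\hy_1;\, C_1 C_2\circ A) = \mu(B_1\diamond B_2, q),
\]
using the product $\diamond$ of $\Aff(n)=\R^n\rtimes\GL(n)$. Since the base point $x$ is untouched, $\mu$ preserves the $\pqm$-fibers; it is free (if $\mu(B,q)=q$ then $C\circ A=A$ forces $C=\id$ as $A$ is invertible, whence $\hy=0$) and transitive on each fiber (given $(x,\hx;A)$ and $(x,\hx';A')$ take $C:=A'\circ A^{-1}\in\GL(n)$ and $\hy:=\hx'-C\hx$). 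Together with the local triviality inherited from $T^*M\otimes\R^n$, this yields the principal $\Aff(n)$-bundle structure.

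Invariance of $\dr$ I would obtain from Proposition~\ref{p3.9} applied with $f=\id_M$: for $B=(\hy,C)$ the affine map $\hf(\hx):=C\hx+\hy$ has $\hf_\star=C$, so $\mu_B$ coincides with the left $\hf$-action, and $\hf\cdot q_{\dr}(\gamma,q)=q_{\dr}(\gamma,\hf\cdot q)$ shows that $\mu_B$ sends rolling curves to rolling curves. As $\dr$ is spanned pointwise by velocities of rolling curves, this gives $(\mu_B)_\star\dr|_q=\dr|_{\mu_B(q)}$, equivalently $\mu_B(\odr(q))=\odr(\mu_B(q))$.

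Next I would set $\mathcal{H}^\nabla_q:=\{B\in\Aff(n)\mid \mu(B,q)\in\odr(q)\}$, which by simple transitivity on the fiber satisfies $\mu(\mathcal{H}^\nabla_q\times\{q\})=\odr(q)\cap\pqm^{-1}(x)$ and is determined uniquely by this identity. The hard part is showing $\mathcal{H}^\nabla_q$ is a subgroup, and here I would lean on three facts: $q\in\odr(q)$ (the constant loop gives $e\in\mathcal{H}^\nabla_q$); the orbit relation is an equivalence relation (reflexive, symmetric by reversal of rolling curves, transitive by concatenation), so distinct orbits are disjoint and $\mu(B,q)\in\odr(q)$ forces $\odr(\mu(B,q))=\odr(q)$; and the $\mu_B$-invariance above. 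Closure then follows by applying $\mu_{B_2}$ to $\mu(B_1,q)\in\odr(q)=\odr(\mu_{B_2}(q))$, which gives $\mu(B_2\diamond B_1,q)\in\odr(q)$; inverses follow from $\mu(B^{-1}\diamond B,q)=q\in\odr(\mu(B^{-1},q))$. For the conjugacy statement, writing $q'=\mu(B_0,q)$ (unique $B_0$ by transitivity) the same manipulations yield $\mathcal{H}^\nabla_{q'}=B_0\,\mathcal{H}^\nabla_q\,B_0^{-1}$, while conversely every conjugate is realized by the point $q'=\mu(B_0,q)$ of the fiber.

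Finally, for the $\GL(n)$-part I would evaluate the rolling curve along a loop $\gamma\in\Omega_M(x)$. Its endpoint is $\mu(B_\gamma,q)$ with $C_\gamma\circ A=A\circ(P^\nabla)_1^0(\gamma)$, so $C_\gamma=A\circ\big((P^\nabla)_0^1(\gamma)\big)^{-1}\circ A^{-1}$; as $\gamma$ ranges over $\Omega_M(x)$ these $C_\gamma$ fill out $A\,H^\nabla|_x\,A^{-1}$. Hence the image of $\mathcal{H}^\nabla_q$ under the projection $\Aff(n)\to\GL(n)$ is conjugate to $H^\nabla|_x$, i.e. the conjugacy class $\mathcal{H}^\nabla$ projects onto $H^\nabla$. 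I expect the only genuine obstacle to be the subgroup property in the third step; the rest is bookkeeping that translates relative parallel transport into the $\mu$-action.
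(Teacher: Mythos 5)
Your proposal is correct and follows essentially the same route as the paper: the paper's proof simply invokes Proposition~\ref{p3.9} and refers the reader to the proof of Proposition~4.1 in \cite{ChitourKokkonen2}, and the steps you write out (simple transitivity of $\mu$ on the $\pi_{Q,M}$-fibers, equivariance of rolling curves under the left $\Aff(n)$-action, the orbit-partition argument giving the subgroup and conjugacy properties, and the identification $C_\gamma=A\circ\big((P^\nabla)_0^1(\gamma)\big)^{-1}\circ A^{-1}$ for the $\GL(n)$-part) are exactly the content being delegated to that reference. No gaps.
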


\begin{proof}
Let $\q \in Q$ and $B = (\hy,C) \in \Aff(n)$. Since $C \circ A$ is in $\GL(n)$, then $\mu(B,q) \in Q$. In order to prove that $\mu$ is transitive and proper, we can follow the same steps of the proof of Proposition 4.1 in \cite{ChitourKokkonen2} due to Proposition \ref{p3.9}.
\end{proof}

%%%%%%%%%%%%%%%%%%%%%%%%%%%%%%%%%%%%%%%%%%%%%%%%%%%%%%%%%%%%%%%%%%%%%%%%
\subsubsection{Affine Holonomy Group of $\Delta$}

Consider now a smooth completely controllable distribution $\Delta$ on $(M,\nabla)$. We will determine the sub-distribution of $\dr$ by restriction to $\Delta$
instead of considering the whole tangent space of $M$.

\begin{definition}
The rolling distribution $\Dr$ on $\Delta$ is the smooth sub-distribution of $\dr$ defined on $(x,\hx;A) \in Q$ by
\begin{equation}\label{e3.15}
\Dr |_{(x, \hx; A)} = \lr (\Delta |_x) |_{(x,\hx;A)}.
\end{equation}
\end{definition}

Since $\Delta$ is completely controllable, we use Proposition \ref{p3.1} to obtain the next corollary.

\begin{corollary}
For any $q_0 = (x_0 , \hx_0 ; A_0) \in Q$ and any a.c. $\Delta$-admissible curve $\gamma : [0,1] \rarrow M$ starting at $x_0$, there exists a unique a.c. $\Dr$-admissible curve $q_{\Delta_R} (\gamma,q_0) : [0,T] \rarrow Q$ where $0<T\leq 1$.
\end{corollary}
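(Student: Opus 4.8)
The plan is to deduce this corollary directly from Proposition~\ref{p3.1} by exploiting the fact that $\Dr$ is, by its very definition \eqref{e3.15}, a sub-distribution of the full rolling distribution $\dr$. The entire content of the statement is that $\Delta$-admissibility of the base curve $\gamma$ is precisely what forces the (already unique) rolling lift furnished by Proposition~\ref{p3.1} to remain tangent to the smaller distribution $\Dr$; beyond this, only the uniqueness clause of that proposition is needed.

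For existence, I would first regard the $\Delta$-admissible curve $\gamma:[0,1]\rarrow M$ merely as an a.c. curve starting at $x_0$ and apply Proposition~\ref{p3.1} to the pair $(\gamma,q_0)$. This produces, on a maximal interval $[0,T]$ with $0<T\leq 1$, the rolling curve $q_{\dr}(\gamma,q_0)(t)=(\gamma(t),\hgamma(t);A(t))$ characterized by $\dot{q}(t)=\lr(\dot{\gamma}(t))|_{q(t)}$ for a.e. $t$. I would then observe that, since $\gamma$ is $\Delta$-admissible, one has $\dot{\gamma}(t)\in\Delta|_{\gamma(t)}$ for a.e. $t$, whence by the definition \eqref{e3.15} of $\Dr$,
\[
\dot{q}(t)=\lr(\dot{\gamma}(t))|_{q(t)}\in \lr(\Delta|_{\gamma(t)})|_{q(t)}=\Dr|_{q(t)},\quad\text{for a.e. }t.
\]
Thus the rolling curve $q_{\dr}(\gamma,q_0)$ is in fact $\Dr$-admissible, and setting $q_{\Dr}(\gamma,q_0):=q_{\dr}(\gamma,q_0)$ yields the desired curve on $[0,T]$.

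For uniqueness, suppose $t\mapsto q(t)$ is any a.c. $\Dr$-admissible curve on $[0,T]$ starting at $q_0$ and projecting onto $\gamma$ under $\pqm$. Because $\Dr\subset\dr$, such a curve is a fortiori $\dr$-admissible and is therefore a rolling curve along $\gamma$ with initial position $q_0$; the uniqueness part of Proposition~\ref{p3.1} then forces $q(t)=q_{\dr}(\gamma,q_0)(t)$ for all $t\in[0,T]$, and the maximal interval $[0,T]$ is inherited verbatim from that proposition. The only point requiring a little care---hardly an obstacle---is to make explicit that $\lr$ sends $\Delta|_x$ into $\Dr|_q$ by construction, so that for a curve projecting to $\gamma$, tangency to $\dr$ together with $\Delta$-admissibility of the base is equivalent to tangency to $\Dr$; everything else is a transcription of Proposition~\ref{p3.1}.
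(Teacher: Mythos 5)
Your argument is correct and is precisely the paper's intended route: the paper offers no separate proof, simply asserting that the corollary follows from Proposition~\ref{p3.1} together with the definition \eqref{e3.15} of $\Dr$ as $\lr(\Delta)$, which is exactly the observation you make explicit. The added detail on uniqueness (a $\Dr$-admissible lift of $\gamma$ is a fortiori $\dr$-admissible, so the uniqueness clause of Proposition~\ref{p3.1} applies) is a faithful elaboration rather than a deviation.
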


Since we can easily restrict the proof of Proposition \ref{p3.3} (cf. \cite{ChitourKokkonen2}) on $\Dr$, we get the next proposition.

\begin{corollary}\label{c3.2}
The action $\mu$ mentioned in Proposition \ref{p3.3} preserves the distribution $\Dr$. Moreover, for every $q\in Q$, there exists a unique \emph{algebraic} subgroup $\mathcal{H}^{\;\nabla}_{\Dr |q}$ of $\mathcal{H}^\nabla_q$, called the affine holonomy group of $\Delta_R$, such that
$$
\mu (\mathcal{H}^{\;\nabla}_{\Dr |q} \times \{q\}) = \oDr (q) \cap \pi_{Q,M}^{-1} (x),
$$
where $x= \pi_{Q,M}(q)$ and $\oDr (q)$ is the $\Dr$-orbit at $q$.
\end{corollary}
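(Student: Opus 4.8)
The plan is to obtain each assertion by restricting to $\Dr$ the corresponding facts already established for the full rolling distribution $\dr$ in Proposition~\ref{p3.3}, using the two features that single out $\Dr$: the action $\mu$ fixes the $M$-coordinate of a point of $Q$, and the loop space $\Omega_{\Delta}(x)$ is closed under concatenation and inversion. First I would prove the invariance $(\mu_B)_\ast\Dr|_q=\Dr|_{\mu_B(q)}$. Since $\pqm\circ\mu_B=\pqm$, the map $\mu_B$ leaves the base point $x$ fixed, and the $M$-projection of $\lr(X)|_q$ is $X$ itself; hence, applying Proposition~\ref{p3.9} with $f=\id$ (so that $\mu_B$ sends the rolling curve along $\gamma$ to the rolling curve along the same $\gamma$) and differentiating at $t=0$ gives $(\mu_B)_\ast\lr(X)|_q=\lr(X)|_{\mu_B(q)}$ for every $X\in T_xM$. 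Restricting this identity to $X\in\Delta|_x$ and recalling $\Dr|_q=\lr(\Delta|_x)|_q$ yields the invariance.

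Next I would produce the subgroup. By the principal bundle structure of Proposition~\ref{p3.3}, the orbit map $B\mapsto\mu(B,q)$ is a bijection of $\Aff(n)$ onto the fiber $\pqm^{-1}(x)$, so the only candidate is
$$
\mathcal{H}^{\;\nabla}_{\Dr|q}:=\{\,B\in\Aff(n)\mid\mu(B,q)\in\oDr(q)\,\}.
$$
With this definition the required identity $\mu(\mathcal{H}^{\;\nabla}_{\Dr|q}\times\{q\})=\oDr(q)\cap\pqm^{-1}(x)$ holds by construction, and uniqueness of a set with this property is immediate from the bijectivity of $\mu(\cdot,q)$. Moreover $\Dr\subset\dr$ forces $\oDr(q)\subset\odr(q)$, so intersecting with the fiber and comparing with Proposition~\ref{p3.3} gives $\mathcal{H}^{\;\nabla}_{\Dr|q}\subset\mathcal{H}^\nabla_q$.

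It then remains to check that $\mathcal{H}^{\;\nabla}_{\Dr|q}$ is a subgroup. Because $(\R^n,\hnabla^n)$ is complete, Proposition~\ref{p3.1} allows $T=1$, so for each $\Delta$-admissible $\gamma$ the rolling curve $q_{\Dr}(\gamma,q)$ is defined on all of $[0,1]$. Every $\Dr$-admissible curve issued from $q$ projects under $\pqm$ to a $\Delta$-admissible curve $\gamma$ of $M$ and, by the uniqueness in the preceding corollary, coincides with $q_{\Dr}(\gamma,q)$; it ends in $\pqm^{-1}(x)$ exactly when $\gamma\in\Omega_{\Delta}(x)$. Writing $\mu(B_\gamma,q)=q_{\Dr}(\gamma,q)(1)$, this identifies $\mathcal{H}^{\;\nabla}_{\Dr|q}$ with $\{B_\gamma\mid\gamma\in\Omega_{\Delta}(x)\}$. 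The assignment $\gamma\mapsto B_\gamma$ satisfies $B_{\delta\cdot\gamma}=B_\delta\diamond B_\gamma$ and $B_{\gamma^{-1}}=B_\gamma^{-1}$, by the same computation on the relative parallel transport $(P^{\nablabar})$ that yields the full affine holonomy group in \cite{ChitourKokkonen2}; since $\Omega_{\Delta}(x)$ is closed under ``$\cdot$'' and inversion by Proposition~\ref{p3.8}, the image is closed under $\diamond$ and inversion, hence an \emph{algebraic} subgroup of $\mathcal{H}^\nabla_q$ (algebraic, since at this stage no differentiable structure is claimed).

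The step I expect to require the most care is the last one: establishing that $\gamma\mapsto B_\gamma$ is a genuine homomorphism compatible with concatenation, and that \emph{every} point of $\oDr(q)\cap\pqm^{-1}(x)$ is realized by a loop in $\Omega_{\Delta}(x)$ so that no holonomy element is lost. Both reduce to the uniqueness of rolling curves combined with Proposition~\ref{p3.9}, but they are the substantive content behind the word ``easily'' in the statement; everything else is a direct restriction of Proposition~\ref{p3.3}.
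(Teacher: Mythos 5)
Your proposal is correct and follows essentially the same route as the paper: the paper itself disposes of this corollary by saying the proof of Proposition~\ref{p3.3} restricts to $\Dr$, and the substantive content you supply (equivariance of rolling curves via Proposition~\ref{p3.9}, identification of the fiber of the orbit with $\{B_\gamma\mid\gamma\in\Omega_\Delta(x)\}$, and closure under concatenation and inversion) is exactly the argument the paper carries out explicitly at the start of the proof of Proposition~\ref{p3.10}. The only quibble is the order of the product: with the concatenation convention of \eqref{he1} and the left action $\mu$, the paper's computation gives $B_{\delta\cdot\gamma}=B_{\gamma}\diamond B_{\delta}$ rather than $B_{\delta}\diamond B_{\gamma}$, but this is immaterial for the subgroup conclusion.
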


As before, one gets the following: if $q' = (x,\hx';A') \in Q$ belongs to the same $\pi_{Q,M}$-fiber as $q$, then $\mathcal{H}^{\;\nabla}_{\Dr |q} $ and $\mathcal{H}^{\;\nabla}_{\Dr |{q'}} $ are conjugate in $\Aff(n)$ and all conjugacy classes of $\mathcal{H}^{\;\nabla}_{\Dr |q} $ are of the form $\mathcal{H}^{\;\nabla}_{\Dr |{q'}} $. This conjugacy class is denoted by $\mathcal{H}^{\;\nabla}_{\Dr } $ and its projection in $GL(n)$ is a subgroup of $H^{\nabla}$ which is equal to the 
$\Delta$-horizontal holonomy group associated with $\Delta$ and the affine connection $\nabla$.

\begin{definition}
We denote by $\Oh_{\Dr}^{loop} (q_0)$ the set of the end points of the rolling development curves with initial conditions any point $\qz$ and any a.c. $\Delta$-admissible loop at $x_0$, i.e., for $\qz \in Q$,
\begin{align*}
\Oh_{\Dr}^{loop} (q_0) = \{ q_{\Delta_R} (\gamma,q_0)(1) \mid \gamma : [0,1] \rarrow M, \text{ a.c. $\Delta$-admissible loop at $x_0$} \}.
\end{align*}
\end{definition}

If we fix a point $q_0$ of $Q = Q(M,\R^n)$ where the initial contact point on $M$ is equal to $x_0$ and that on $\R^n$ is the origin, then we may consider the rolling development of $M$ along a loop based at $x_0$. Then, one obtains a control problem whose state space is the fiber $\pi^{-1}_{Q,M}(x_0)$ and the reachable set is in the fiber $\pi^{-1}_{Q,M}(x_0)$ (for more details, cf. \cite{ChitourKokkonen2}). Then, $\Oh_{\Dr}^{loop} (q_0)$ is trivially in bijection with $\oDr (q_0) \cap \pi_{Q,M}^{-1} (x_0) $ and so $\mu (\mathcal{H}^{\;\nabla}_{\Dr |q_0} \times \{q_0\}) \simeq \Oh_{\Dr}^{loop} (q_0)$.

\begin{proposition}\label{pr:submersion}
For any $q_0=(x_0,\hat{x}_0;A_0)\in Q$ the restriction of $\pi_{Q,M}:Q\to M$
into the orbit $\oDr (q_0)$ is a submersion onto $M$.
\end{proposition}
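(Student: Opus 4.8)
The plan is to establish the two defining properties of a surjective submersion separately: that $\pqm$ maps the immersed submanifold $\oDr (q_0)$ onto all of $M$, and that its differential is surjective at every point of the orbit. Throughout I would exploit the single structural fact that the rolling lift intertwines the two settings: since $(\pqm)_*\big(\lr(X)|_q\big)=X$ for every $X\in T_xM$, each field $\lr(Z)$ with $Z$ a section of $\Delta$ is $\pqm$-related to $Z$. Hence $\pqm$ carries $\Dr$-admissible curves to $\Delta$-admissible curves, while, conversely, every $\Delta$-admissible curve admits a rolling lift.

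For surjectivity onto $M$, I would fix $x\in M$ and use complete controllability of $\Delta$ to pick a $\Delta$-admissible curve $\gamma:[0,1]\rarrow M$ from $x_0$ to $x$. Its rolling lift $q_{\Dr}(\gamma,q_0)$ is $\Dr$-admissible, and because $\hM=\R^n$ with its Euclidean (complete) metric falls under the last assertion of Proposition \ref{p3.1}, this lift is defined on all of $[0,1]$; its endpoint then lies in $\oDr (q_0)$ and projects to $x$. Thus $\pqm(\oDr (q_0))=\Oh_\Delta(x_0)=M$.

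The main work is showing the differential is onto, and here the hard part is that complete controllability need not force $\text{Lie}_x(\Delta)=T_xM$ in the smooth category, so I cannot simply project the Lie algebra generated by $\Dr$. Instead I would invoke the full Orbit Theorem description of the tangent space (cf. \cite{AgrachevSachkov,Jean}): at $q\in\oDr (q_0)$, the space $T_q\oDr (q_0)$ is the span of all vectors $(P_*\lr(Z))|_q$, where $Z$ ranges over sections of $\Delta$ and $P$ over the (local) diffeomorphisms generated by flows of fields tangent to $\Dr$. Since each $\lr(Z)$ is $\pqm$-related to $Z$, each such $P$ projects to the corresponding composition $\ol{P}$ of flows of $\Delta$-tangent fields, and a direct chase of the defining formula for the pushforward yields $(\pqm)_*\big((P_*\lr(Z))|_q\big)=(\ol{P}_* Z)|_x$.

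Consequently $(\pqm)_*\big(T_q\oDr (q_0)\big)$ is exactly the span of the vectors $(\ol{P}_* Z)|_x$, which by the Orbit Theorem applied to $\Delta$ equals $T_x\Oh_\Delta(x_0)$; complete controllability makes this all of $T_xM$. This proves $\pqm|_{\oDr (q_0)}$ is a submersion, and combined with the surjectivity above, a submersion onto $M$. The only points demanding care, which I would verify explicitly, are that the projectable generators $\lr(Z)$ already span $\Dr$ pointwise (so they define the same orbit as all sections of $\Dr$), and that local flows suffice throughout, since the fields involved need not be complete.
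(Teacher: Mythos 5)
Your proposal is correct, but it takes a genuinely different route from the paper. The paper proves surjectivity of the differential through control-theoretic machinery: it introduces the end-point maps $E^{x,t}$ and $\tilde E^{q,t}$ of $\Delta$ and $\Dr$, invokes the appendix result (Proposition \ref{regular-0}) to produce an \emph{o-regular} control $\ol{u}$ for $E^{x_0,1}$ that also lies in the domain of $\tilde E^{q_0,1}$ (this is where completeness of $\R^n$ and Proposition \ref{p3.1} enter), and then, for an arbitrary $X\in T_{x_0}M$, builds via a Moore--Penrose/continuation argument a $C^1$ family of controls $u(s)$ realizing $X$ as the $s$-derivative of $E^{x_0,1}(u(s))$ at $s=0$; lifting the whole family through $\tilde E^{q_0,1}$ and projecting gives $X\in(\pqm)_*T_{q_0}\oDr(q_0)$. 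You instead work directly with item 3 of Theorem \ref{th:orbit}: since the generators $\lr(Z)$ of $\Dr$ are $\pqm$-related to the generators $Z$ of $\Delta$, their flows --- and hence the pushforwards $\varphi_*\lr(Z)$ spanning $T_q\oDr(q_0)$ --- project onto the pushforwards $\ol{\varphi}_*Z$ spanning $T_x\Oh_{\Delta}(x_0)=T_xM$. This is shorter and bypasses o-regular controls entirely, though it does rely on the two points you flag: that the projectable generators $\lr(Z)$ span $\Dr$ pointwise (so the orbits agree), and that every element of $G_{\Delta}$ lifts to an element of $G_{\Dr}$ defined at the relevant point, which again uses the global existence of rolling lifts over the complete target $\R^n$. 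Both arguments correctly avoid assuming LARC; the paper relegates the bracket-based argument to a remark restricted to that case. What the paper's route buys is the reusable existence statement for o-regular controls (Proposition \ref{regular-0}), of independent interest; what yours buys is economy, resting only on the Orbit Theorem already quoted in the appendix.
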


\begin{proof}
Clearly it is enough to show that $(\pi_{Q,M})_*T_{q_0}\oDr (q_0)=T_{x_0} M$.
Also recall that by the assumption of complete controllability of $\Delta$ we have $M=\mc{O}_{\Delta}(x_0)$.

Write $E^{x,t}(u)$ and $\tilde{E}^{q,t}(u)$ for the end-point maps of $\Delta$ and $\Delta_R$
starting from $x\in M$ and $q\in Q$, respectively.
One easily sees that $E$ and $\tilde{E}$ are related by
\begin{align}\label{eq:endproj}
\pi_{Q,M}\circ \tilde{E}^{q,t}=E^{x,t},
\end{align}
for any $q=(x,\hat{x};A)\in Q$ and $t$ where defined.
We also denote by $k$ the rank of $\Delta$ (i.e. the rank of $\Delta_R$).

Let $\ol{u}\in L^2([0,1],\R^k)$ be any o-regular control of $E^{x_0,1}$
which belongs to the domain of definition of $\tilde{E}^{q_0,1}$.
The existence of such an $\ol{u}$ is guaranteed by an application
of Proposition~\ref{regular-0} given in the appendix
and Proposition \ref{p3.1}, as in this case $(\hat{M},\hat{g})=\R^n$ is complete.

Let then $X\in T_{x_0} M$ be arbitrary,
and notice that $T_{x_0} \mc{O}_{\Delta}(x_0)=T_{x_0} M$.
By o-regularity of $\ol{u}$ with respect to $E^{x_0,1}$,
there exists a $C^1$-map $u:I\to L^2([0,1],\R^k)$,
where $I$ is an open neighbourhood of $0$,
such that $u(0)=\ol{u}$ and $h(t,s):=E^{x_0,t}(u(s))$, $(t,s)\in [0,1]\times I$,
satisfy $\pa{s}h(1,s)|_{s=0}=X$.
Indeed, let $G:I\to \mc{O}_{\Delta}(x_0)$ be any smooth curve such that $\dot{G}(0)=X$.
The o-regularity of $\ol{u}$ means that $D_{u} E^{x_0,1}$, i.e. the differential of $E^{x_0,1}$ at $u$,
is surjective linear map from $L^2([0,1],\R^k)$ onto $T_{E^{x_0,1}(u)}\mc{O}_{\Delta}(x_0)$ when $u=\ol{u}$,
and hence for all $u$ close to $\bar{u}$ in $L^2([0,1],\R^k)$.
One next defines $P(u)$ as the Moore-Penrose inverse of $D_{u} E^{x_0,1}$ and one considers the Cauchy 
problem $\frac{du(s)}{ds}=P(u(s))\frac{dG(s)}{ds}$, $u(0)=\bar{u}$. Then \cite[Proposition 2]{chitour} asserts that the maximal solution $u(\cdot)$ of the Cauchy problem is well-defined on a non 
empty interval centered at zero, which concludes the argument of the claim (after shrinking $I$ if necessary).

Write $\tilde{h}(t,s)=\tilde{E}^{q_0,t}(u(s))$ for $(t,s)\in [0,1]\times I$.
For each $i=1,\dots,d$ and $s\in I$,
the maps $t\mapsto h(t,s)$ and $t\mapsto \tilde{h}(t,s)$ are absolutely continuous
and $\Delta$- and $\Delta_R$-admissible curves, respectively,
and $h(t,s)=\pi_{Q,M}(\tilde{h}(t,s))$ by \eqref{eq:endproj}.
In particular, $\pa{s}\tilde{h}(1,s)|_{s=0}$ is a vector in $T_{q_0}\oDr(q_0)$
and
\[
(\pi_{Q_,M})_*\big(\pa{s}\tilde{h}(1,s)|_{s=0}\big)=\pa{s}h(1,s)|_{s=0}=X,
\]
which shows that $X\in (\pi_{Q_,M})_*(T_{q_0}\oDr(q_0))$.
Because $X$ was arbitrary tangent vector of $M$ at $x_0$,
we conclude that
$T_{x_0} M\subset (\pi_{Q_,M})_*(T_{q_0}\oDr(q_0))$.

The opposite inclusion
$(\pi_{Q_,M})_*(T_{q_0}\oDr(q_0))\subset T_{x_0} M$ being trivially true,
this completes the proof.
\end{proof}

\begin{remark}
Here is an alternative proof
in the case that the distribution $\Delta$ satisfies LARC on a connected manifold $M$ i.e. $Lie_x(\Delta)=T_x M$ for all $x\in M$.

Given vector fields $Y_1,\dots,Y_r$
and a subset $J=\{i_1,\dots,i_l\}$ of $\{1,\dots,r\}$ we
write $Y_J$ for the iterated bracket $[Y_{i_1},[Y_{i_2},\dots[Y_{i_{l-1}},Y_{i_l}]\dots]$
of length $l$.
Given $X\in T_{x_0} M = T_{x_0} \mc{O}_{\Delta}(x_0)$,
there are, by the assumption, vector fields $Y_1,\dots,Y_r$ tangent to $\Delta$,
subsets $J_1,\dots,J_t$ of $\{1,\dots,r\}$
and numbers $a_1,\dots,a_t$ such that
$X=\sum_{s=1}^t a_s Y_{J_s}|_{x_0}$.
The lifts $\lr(Y_{i})$, $i=1,\dots,r$ are tangent to $\Delta_R$
and satisfy $(\pi_{Q,M})_*\lr(Y_{i})=Y_i$,
hence if we write $\lr(Y)_J$ for $[\lr(Y_{i_1}),[\lr(Y_{i_2}),\dots[\lr(Y_{i_{l-1}}),\lr(Y_{i_l})]\dots]$
when $J$ is as above, we have that $\lr(Y)_{J_s}$ is tangent to $\oDr(q_0)$ for every $s=1,\dots,t$
\[
(\pi_{Q,M})_*|_{q_0}\big(\sum_{s=1}^t a_s \lr(Y)_{J_s}\big)=\sum_{s=1}^t a_s Y_{J_s}|_{x_0}=X,
\]
i.e. $X\in (\pi_{Q,M})_*T_{q_0}\oDr(q_0)$. By arbitrariness of $X$ in $T_{x_0} M$
we have the claimed submersivity of $\pi_{Q,M}$.
\end{remark}

Classical results now apply to give the following.

\begin{corollary}\label{cor:fiber_submanifold}
In particular, for any $x\in M$ the fiber $\pi_{Q,M}^{-1}(x)\cap \oDr (q_0)$
of $\oDr (q_0)$ over $x$ is either empty or a (closed) embedded submanifold of $\oDr (q_0)$
of dimension $\delta=\dim \oDr (q_0)-\dim M$.
\end{corollary}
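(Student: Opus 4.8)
The plan is to read this off as the standard preimage theorem for submersions, with Proposition~\ref{pr:submersion} supplying the crucial submersivity. The one point needing care is that Proposition~\ref{pr:submersion} is phrased at the base point $q_0$ only, whereas the preimage theorem needs $\pi_{Q,M}$ to be submersive at \emph{every} point of the orbit. To obtain this, I would first recall that the $\Dr$-orbit is an equivalence class: for any $q'\in\oDr (q_0)$ one has $\oDr (q')=\oDr (q_0)$, since $q_0$ and $q'$ are reachable from one another by $\Dr$-admissible curves. Moreover, all the hypotheses used in the proof of Proposition~\ref{pr:submersion}---complete controllability of $\Delta$ (so that $M=\mc{O}_{\Delta}(x')$ for the base point $x'$ of $q'$) and completeness of $(\R^n,\hnabla^n)$ (so that Proposition~\ref{p3.1} yields $T=1$)---hold verbatim when $q_0$ is replaced by an arbitrary $q'$. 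Hence Proposition~\ref{pr:submersion} applies with $q'$ in place of $q_0$ and gives $(\pi_{Q,M})_*T_{q'}\oDr (q')=T_{x'} M$; since $\oDr (q')=\oDr (q_0)$, this says exactly that $\pi_{Q,M}$ restricted to $\oDr (q_0)$, viewed as a smooth map from the orbit (with its intrinsic immersed-manifold structure, provided by the Orbit Theorem) onto $M$, has surjective differential at $q'$. As $q'$ was arbitrary, this restriction is a smooth submersion.

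Next I would invoke the submersion preimage theorem: if $\pi:N\to M$ is a smooth submersion, then every point of $M$ is a regular value, and consequently for each $x\in M$ the set $\pi^{-1}(x)$ is either empty or a closed embedded submanifold of $N$ of codimension $\dim M$. Applying this with $N=\oDr (q_0)$ and $\pi=\pi_{Q,M}|_{\oDr (q_0)}$ shows that $\pi_{Q,M}^{-1}(x)\cap\oDr (q_0)$ is either empty or an embedded submanifold of $\oDr (q_0)$ of dimension $\dim\oDr (q_0)-\dim M=\delta$, which is the asserted statement. Its closedness in $\oDr (q_0)$ is immediate, being the preimage of the closed singleton $\{x\}$ under the continuous map $\pi$.

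Finally, I would note that the ``empty'' alternative is stated only for completeness: since $\Delta$ is completely controllable, the image of $\pi$ is $\mc{O}_{\Delta}(x_0)=M$, so in fact each fiber is nonempty. This is not needed for the corollary but clarifies the dichotomy. The argument involves no genuine obstacle; the only mildly delicate step is the promotion of the base-point submersivity of Proposition~\ref{pr:submersion} to submersivity at all points of the orbit, and this is resolved purely by the orbit-invariance $\oDr (q')=\oDr (q_0)$ together with the uniformity, in the choice of base point, of the hypotheses underlying that proposition.
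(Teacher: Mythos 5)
Your proof is correct and follows essentially the same route the paper intends: the paper simply invokes ``classical results'' after Proposition~\ref{pr:submersion}, meaning precisely the submersion preimage theorem you apply. Your explicit promotion of submersivity from the base point $q_0$ to all points of the orbit via $\oDr(q')=\oDr(q_0)$ is the same observation the paper's proof of Proposition~\ref{pr:submersion} makes implicitly with its ``clearly it is enough to show'' reduction, so there is no substantive difference.
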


We arrive at the main result of this subsection.

\begin{proposition}\label{p3.10}
Assume that $\Delta$ is a constant rank completely controllable distribution on $(M,\nabla)$ where $M$ is a connected smooth manifold and $\nabla $ an affine connection. Then, the $\Delta$-horizontal holonomy group $H_{\Delta}^{\;\nabla}$  and the affine holonomy group $\mathcal{H}^{\;\nabla}_{\Dr}$ of $\Delta_R$ as defined previously are Lie subgroups of $\Aff(n)$. 
\end{proposition}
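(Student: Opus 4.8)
The plan is to work entirely inside the rolling bundle and reduce everything to the fibre of the orbit $\oDr(q_0)$ over the base point, using the principal bundle structure of Proposition~\ref{p3.3}. Fix $\qz\in Q$ and set $\mathcal{H}:=\mathcal{H}^{\;\nabla}_{\Dr|q_0}$ and $\mathcal{F}:=\oDr(q_0)\cap \pi_{Q,M}^{-1}(x_0)$. By Corollary~\ref{cor:fiber_submanifold} (which rests on the submersivity of Proposition~\ref{pr:submersion}), $\mathcal{F}$ is a closed embedded submanifold of the orbit $\oDr(q_0)$, and by Corollary~\ref{c3.2} the map $B\mapsto \mu(B,q_0)$ is a bijection from $\mathcal{H}$ onto $\mathcal{F}$, since $\mu(\mathcal{H}\times\{q_0\})=\mathcal{F}$ and $\mu$ is free (being a principal bundle action). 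First I would transport the smooth structure of $\mathcal{F}$ to $\mathcal{H}$ through this bijection and check that it makes the inclusion $\mathcal{H}\hookrightarrow \Aff(n)$ an injective immersion: indeed $\Aff(n)\to \pi_{Q,M}^{-1}(x_0)$, $B\mapsto \mu(B,q_0)$, is a diffeomorphism onto the embedded fibre of $\pi_{Q,M}$, and $\mathcal{F}$, being embedded in the immersed orbit $\oDr(q_0)$ and set-theoretically contained in that embedded fibre, is an immersed submanifold of the fibre.

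Next I would verify that multiplication is smooth for this structure. The key observation is that for $B\in\mathcal{H}$ the diffeomorphism $\mu_B$ of $Q$ preserves both the fibre $\pi_{Q,M}^{-1}(x_0)$ and the orbit $\oDr(q_0)$ — the latter because $\mu_B$ preserves $\Dr$ by Corollary~\ref{c3.2}, hence sends $\Dr$-orbits to $\Dr$-orbits, together with $\mu_B(q_0)\in\oDr(q_0)$ — so $\mu_B$ restricts to a diffeomorphism of $\mathcal{F}$. Under the identification $\mathcal{H}\cong\mathcal{F}$ the group multiplication of $\mathcal{H}$ is exactly the action map $\mathcal{H}\times\mathcal{F}\to\mathcal{F}$, $(B,q)\mapsto \mu(B,q)$. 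To prove this map smooth I would restrict the smooth global action $\mu:\Aff(n)\times Q\to Q$ to the immersed submanifold $\mathcal{H}\times\mathcal{F}$; its image lies in $\oDr(q_0)$, so the corestriction to the orbit is smooth precisely because orbits of families of vector fields are weakly embedded (initial) submanifolds by the Orbit Theorem; since the image in fact lies in the embedded submanifold $\mathcal{F}$ of $\oDr(q_0)$, the corestriction $\mathcal{H}\times\mathcal{F}\to\mathcal{F}$ is smooth as well. A smooth manifold carrying a group structure with smooth multiplication is automatically a Lie group (left translations are diffeomorphisms, so inversion is smooth by the inverse function theorem applied to $(a,b)\mapsto(a,ab)$), hence $\mathcal{H}$ is a Lie group and, by the previous paragraph, an immersed Lie subgroup of $\Aff(n)$.

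Finally, for $H^{\;\nabla}_\Delta$ I would use that it is the image of $\mathcal{H}$ under the canonical projection $\mathrm{pr}:\Aff(n)=\R^n\rtimes\GL(n)\to\GL(n)$, which is a Lie group homomorphism. The restriction $\mathrm{pr}|_{\mathcal{H}}:\mathcal{H}\to\GL(n)$ is then a smooth homomorphism of Lie groups, and its image is, by the standard theorem on homomorphic images, an immersed Lie subgroup of $\GL(n)$, namely $H^{\;\nabla}_\Delta$. Since the objects $\mathcal{H}^{\;\nabla}_{\Dr|q}$ (resp.\ $H^{\;\nabla}_\Delta|_x$) are mutually conjugate in $\Aff(n)$ (resp.\ $\GL(n)$), the conclusion for the chosen representative yields it for the whole conjugacy class $\mathcal{H}^{\;\nabla}_{\Dr}$ (resp.\ $H^{\;\nabla}_\Delta$).

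The hard part, and the only step beyond bookkeeping with the earlier propositions, is the smoothness of the group multiplication: the obstruction is that the orbit $\oDr(q_0)$ is a priori only immersed in $Q$, so a smooth map into $Q$ whose image lies in $\oDr(q_0)$ need not be smooth as a map into $\oDr(q_0)$. This is exactly resolved by the weakly embedded (initial submanifold) property furnished by the Orbit Theorem, which I would be careful to invoke explicitly; everything else reduces to the submersion and embedded-fibre statements already established and to standard facts about Lie groups and principal bundles.
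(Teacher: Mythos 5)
Your proposal is correct and follows essentially the same route as the paper: identify $\mathcal{H}^{\;\nabla}_{\Dr|q_0}$ with the fibre $\oDr(q_0)\cap\pi_{Q,M}^{-1}(x_0)$, use Proposition~\ref{pr:submersion} and Corollary~\ref{cor:fiber_submanifold} to see this fibre is embedded in the orbit (hence immersed in $\Aff(n)$ via $\mu$), and invoke the initial/weakly-embedded property of orbits from the Orbit Theorem to get smoothness of the multiplication. The only cosmetic difference is that you obtain closure under multiplication and smoothness via the action map $(B,q)\mapsto\mu(B,q)$ and the fact that $\mu_B$ preserves the orbit, whereas the paper verifies closure by concatenating horizontal loops and then pulls back the multiplication of $\Aff(n)$; both are valid and rest on the same two key lemmas.
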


\begin{proof}
It is enough to prove the claim for $\mathcal{H}^{\;\nabla}_{\Dr}$.
We first argue that $\mathcal{H}^{\;\nabla}_{\Dr |q_0}$ is an algebraic subgroup of $\Aff(n)$.
To this end, to any $p\in \pi_{Q,M}^{-1} (x_0)$ (i.e. $p$ is an arbitrary element of
the fiber of $Q$ over $x_0$)
we match a unique $(y_p,C_p)\in \Aff(n)$ such that $\mu((y_p,C_p),q_0)=p$.
Recall that $\oDr (q_0) \cap\pi_{Q,M}^{-1} (x_0)$ is identified with $\mathcal{H}^{\;\nabla}_{\Dr |q_0}
$ through this correspondence.

Then given $p_1, p_2\in \oDr (q_0) \cap\pi_{Q,M}^{-1} (x_0)$,
there are $\Delta$-admissible (piecewise smooth) loops $\gamma_1,\gamma_2\in\Omega_M(x_0)$ in $M$
based at $x_0$
such that $p_i=q_{\Delta_R}(\gamma_i,q_0)(1)$ for $i=1,2$.
Letting $p=q_{\Delta_R}(\gamma_1\cdot\gamma_2,q_0)(1)$ we have
\[
\mu((y_p,C_p),q_0)
=&p=q_{\Delta_R}(\gamma_1\cdot\gamma_2,q_0)(1)
=q_{\Delta_R}\big(\gamma_1,q_{\Delta_R}(\gamma_2,q_0)\big)(1)
=q_{\Delta_R}\big(\gamma_1,p_2\big)(1) \\
=&q_{\Delta_R}\big(\gamma_1,\mu((y_{p_2},C_{p_2}),q_0)\big)(1)
=\mu((y_{p_2},C_{p_2}),q_{\Delta_R}\big(\gamma_1,q_0)(1)\big) \\
=&\mu((y_{p_2},C_{p_2}),p_1)
=\mu\big((y_{p_2},C_{p_2}),\mu((y_{p_1},C_{p_1}),q_0)\big) \\
=&\mu\big((y_{p_2},C_{p_2})(y_{p_1},C_{p_1}),q_0\big),
\]
i.e. $(y_p,C_p)=(y_{p_2},C_{p_2})(y_{p_1},C_{p_1})$,
because the action $\mu$ is free.
Since $\gamma_1\cdot\gamma_2$ is $\Delta$-admissible loop, we have
$p=q_{\Delta_R}(\gamma_1\cdot\gamma_2,q_0)(1)\in \oDr (q_0) \cap\pi_{Q,M}^{-1} (x_0)$
i.e. $(y_p,C_p)\in \mathcal{H}^{\;\nabla}_{\Dr |q_0}$,
and therefore $\mathcal{H}^{\;\nabla}_{\Dr |q_0}$ is indeed an algebraic subgroup of $\Aff(n)$
as claimed.

In other words we have shown that if $m:\Aff(n)\times\Aff(n)\to\Aff(n)$
is the smooth group multiplication operation on $\Aff(n)$,
then
\[
m(\mathcal{H}^{\;\nabla}_{\Dr |q_0}\times \mathcal{H}^{\;\nabla}_{\Dr |q_0})\subset \mathcal{H}^{\;\nabla}_{\Dr |q_0}.
\]
By the orbit theorem \ref{th:orbit} as given in the appendix (see also \cite{harms12}),
we know that any smooth map $f:Z\to Q$ for any smooth manifold $Z$
such that $f(Z)\subset \oDr (q_0)$
is smooth as a map $f:Z\to \oDr (q_0)$.
In other words, $\oDr (q_0)$ is an \emph{initial submanifold} of $M$ (cf. \cite{harms12}).

By Corollary \ref{cor:fiber_submanifold}
$\oDr (q_0)\cap\pi_{Q,M}^{-1} (x_0)$ is a smooth embedded submanifold of $\oDr (q_0)$,
hence an initial submanifold of $Q$.
Since $\oDr (q_0)\cap\pi_{Q,M}^{-1} (x_0)\subset \pi_{Q,M}^{-1} (x_0)$
and $\pi_{Q,M}^{-1} (x_0)$ is diffeomorphic to $\Aff(n)$ using the action $\mu$,
we have that $\mathcal{H}^{\;\nabla}_{\Dr |q_0}$ is a smooth immersed submanifold of $\Aff(n)$
as well.
Now the group multiplication $m$ restricted to $\mathcal{H}^{\;\nabla}_{\Dr |q_0}$
which we write as $m'$ is a smooth map $m':\mathcal{H}^{\;\nabla}_{\Dr |q_0}\times \mathcal{H}^{\;\nabla}_{\Dr |q_0}\to\Aff(n)$ whose image is a subset of $\mathcal{H}^{\;\nabla}_{\Dr |q_0}$.
Pulling this map back by the action $\mu$ on $Q$
we obtain a smooth map $M:(\oDr (q_0)\cap\pi_{Q,M}^{-1} (x_0))\times (\oDr (q_0)\cap\pi_{Q,M}^{-1} (x_0))\to Q$
whose image is contained in $\oDr (q_0)\cap\pi_{Q,M}^{-1} (x_0)$.
As mentioned above, $\oDr (q_0)\cap\pi_{Q,M}^{-1} (x_0)$ is an initial submanifold
of $Q$, hence $M$ is smooth as a map into $\oDr (q_0)\cap\pi_{Q,M}^{-1} (x_0)$.
This then is reflected, by applying the action $\mu$ once more,
in the fact that $m'$ is smooth as a map into $\mathcal{H}^{\;\nabla}_{\Dr |q_0}$.
Thus the latter space is a Lie-subgroup of $\Aff(n)$.
\end{proof}

\begin{remark}
The situation described in Remark~\ref{rieman0} with the rolling formalism can be treated as the 
rolling system without spinning nor slipping of two oriented connected Riemannian manifolds $(M, g)$ 
and $(\R^n, s_n)$, where $s_n$ is the Euclidean metric on $\R^n$. Thus, the state space $Q(M,\R^n)$ 
is a principal $\SE(n)$-bundle (cf \cite{ChitourKokkonen}, \cite{ChitourKokkonen1} and 
\cite{ChitourKokkonen2} for more details).
\end{remark}

%%%%%%%%%%%%%%%%%%%%%%%%%%%%%%%%%%%%%%%%%%%%%%%%%%%%%%%%%%%%%%%%%%%%%%
\subsection{Integrability of $\Delta_R$}
%%%%%%%%%%%%%%%%%%%%%%%%%%%%%%%%%%%%%%%%%%%%%%%%%%%%%%%%%%%%%%%%%%%%%%

A natural question arises in the framework of horizontal holonomy,
namely under which conditions the horizontal holonomy
group $\mc{H}_{\Delta_R}^\nabla$ is trivial.
More generally, we pose this question on the level of Lie algebra,
which translates on the group level to asking when $\mc{H}_{\Delta_R}^\nabla$ is discrete
in its underlying Lie group topology.
As $\mc{H}_{\Delta_R}^\nabla$ is identified with a fiber $\oDr (q_0) \cap\pi_{Q,M}^{-1} (x_0)$ of the orbit $\oDr (q_0)$,
we see that answering question comes down to
studying when the distribution $\Delta_R$ itself is involutive.

In the case where $\Delta=TM$, it is known that the answer is that
$\mc{H}_{\Delta_R}^\nabla$ is discrete if and only if $(M,\nabla)$ has vanishing curvature and torsion.
This justifies the following definition.

\begin{definition}
We say that the triple $(M,\nabla)$ is \emph{$\Delta$-horizontally flat} 
is $\Delta_R$ is involutive.
\end{definition}

By \eqref{e3.12}, we see that for any vector field $X,Y$ tangent to $\Delta$ we have
for any $q=(x,\hat{x};A)\in Q$,
\[
[\lr(X),\lr(Y)]|_q=\lr([X,Y])|_q-\lns(T^{\nabla}(X,Y))|_q+\nu(AR^{\nabla}(X,Y))|_q
\]
where, as before, $T^{\nabla}$ and $R^{\nabla}$ are the torsion and the curvature
of $\nabla$, respectively.

This formula immediately implies a simple characterization of the involutivity of $\Delta_R$.

\begin{proposition}
The manifold with connection $(M,\nabla)$ is $\Delta$-horizontally flat
if and only if $\Delta$ is involutive and
for all $x\in M$ and $X,Y\in \Delta|_x$,
\[
T^{\nabla}(X,Y)=0,\quad
R^{\nabla}(X,Y)=0
\]
\end{proposition}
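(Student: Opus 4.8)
The plan is to reduce involutivity of $\Delta_R$ to a pointwise condition on a local frame and then read off the three requirements from the bracket identity displayed just above the statement. First I would fix, locally around an arbitrary $x\in M$, a frame $X_1,\dots,X_k$ of sections of $\Delta$, where $k$ is the rank of $\Delta$. Since $\lr$ is injective and $\Delta_R|_q=\lr(\Delta|_x)|_q$ by \eqref{e3.15}, the lifts $\lr(X_1),\dots,\lr(X_k)$ form a local frame of $\Delta_R$. Because the Lie bracket of two sections of a distribution differs from a $C^\infty(M)$-bilinear expression in the frame only by terms already tangent to the distribution, $\Delta_R$ is involutive if and only if $[\lr(X_i),\lr(X_j)]|_q\in\Delta_R|_q$ for all $i,j$ and all $q=(x,\hat x;A)\in Q$; likewise the tensorial conditions $T^{\nabla}(X,Y)=0$ and $R^{\nabla}(X,Y)=0$ for $X,Y\in\Delta$ are equivalent to their validity on $X_1,\dots,X_k$. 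Thus it suffices to analyze a single bracket.

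Second, I would exploit the splitting $T_qQ=\mathcal{H}\oplus V|_q(\pi_Q)$ associated with the bundle $\pi_Q:Q\to M\times\R^n$, where $V|_q(\pi_Q)=\{\nu(B)\}$ is the vertical space and $\mathcal{H}=\lns(T_xM\times T_{\hat x}\R^n)$ is the no-spinning horizontal space on which $(\pi_Q)_*$ restricts to an isomorphism. By linearity of $\lns$, the horizontal space splits further into the $M$-directions $\lns(\cdot,0)$ and the $\R^n$-directions $\lns(0,\cdot)$, and in this decomposition $\Delta_R|_q$ lies entirely in $\mathcal{H}$ as the graph $\{\lns(W,AW):W\in\Delta|_x\}$, since $\lr(W)=\lns(W,AW)$. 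Applying this to
\[
[\lr(X_i),\lr(X_j)]|_q=\lr([X_i,X_j])|_q-\lns(T^{\nabla}(X_i,X_j))|_q+\nu(AR^{\nabla}(X_i,X_j))|_q,
\]
the vertical component of the right-hand side is exactly $\nu(AR^{\nabla}(X_i,X_j))$, while the horizontal component is $\lns([X_i,X_j]-T^{\nabla}(X_i,X_j),\,A[X_i,X_j])$ (reading $\lns(T^{\nabla}(X_i,X_j))=\lns(T^{\nabla}(X_i,X_j),0)$).

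Third, I would read off the conditions. A vector lies in $\Delta_R|_q$ precisely when its vertical component vanishes and its horizontal component has the graph form $\lns(W,AW)$ with $W\in\Delta|_x$. Since $\nu$ is injective and $A$ is invertible, vanishing of the vertical part is equivalent to $R^{\nabla}(X_i,X_j)=0$. For the horizontal part, matching the $M$- and $\R^n$-directions forces $A([X_i,X_j]-T^{\nabla}(X_i,X_j))=A[X_i,X_j]$, hence $T^{\nabla}(X_i,X_j)=0$; once this holds the common vector is $W=[X_i,X_j]$, and the membership $W\in\Delta|_x$ is exactly involutivity of $\Delta$. Running this equivalence over all $i,j$, all $x$, and all $A$ yields that $\Delta_R$ is involutive if and only if $\Delta$ is involutive and both $T^{\nabla}$ and $R^{\nabla}$ vanish on $\Delta$, which is the assertion.

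The part requiring the most care is the second step: one must verify that the vertical lifts, the $M$-horizontal lifts and the $\R^n$-horizontal lifts are genuinely in direct sum in $T_qQ$, so that the three components can be annihilated independently, and that $\Delta_R$ is captured by the precise graph condition $\hat V=AV$ with $V\in\Delta$ rather than merely being contained in $\mathcal{H}$. Once this structural bookkeeping is in place, the remainder is a direct, sign-tracking comparison against the already-established bracket formula.
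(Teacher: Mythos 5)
Your proposal is correct and follows the same route as the paper, which simply displays the bracket identity $[\lr(X),\lr(Y)]|_q=\lr([X,Y])|_q-\lns(T^{\nabla}(X,Y))|_q+\nu(AR^{\nabla}(X,Y))|_q$ and declares the characterization immediate; you have merely filled in the bookkeeping (reduction to a local frame by tensoriality/$C^\infty$-linearity, the direct-sum splitting of $T_qQ$ into vertical and no-spinning horizontal parts, and the identification of $\Delta_R|_q$ as the graph $\{\lns(W,AW):W\in\Delta|_x\}$) that the paper leaves implicit. The conclusion is unaffected by the sign/placement discrepancy between the paper's two versions of the torsion term (\eqref{e3.33} versus the display preceding the proposition), since $A$ is invertible either way.
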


For the rest of this subsection, we assume that $M$ is a Riemannian
manifold with metric $g$ and that $\nabla$ is the associated Levi-Civita connection.

Let $\Delta^\perp$ be the $g$-orthogonal complement of $\Delta$,
and let $P:TM\to \Delta$ be $P^\perp:TM\to \Delta^\perp$ be the orthogonal projections onto
$\Delta$ and $\Delta^\perp$, respectively.
Define the fundamental $\fII$ form of $\Delta$ by
\[
\fII(X,Y)=P^\perp(\nabla_X Y),
\quad \forall X,Y\in \Delta|_x,\ x\in M.
\]
When $\xi\in \Delta^{\perp}|_x$ is given,
one defines the shape operator $S_{\xi}:\Delta|_x\to\Delta|_x$ of $\Delta$ with respect to $\xi$ to be
given by
\[
g(S_{\xi}(X),Y)=-g(\xi,\fII(X,Y)),\quad \forall X,Y\in \Delta|_x.
\]

In the case where $\Delta$ is involutive $\fII$ is symmetric,
and we define the \emph{induced $\Delta$-connection} $D$
and \emph{induced $\Delta^\perp$-connection} $D^\perp$ by
\[
D_X Y&=P(\nabla_X Y), \\
D^\perp_X \xi&=P^\perp(\nabla_X \xi),
\]
for $X\in \Delta|_x$, for any vector field $Y$ tangent to $\Delta$
and for any vector field $\xi$ tangent to $\Delta^\perp$.
Furthermore, if one defines for $X,Y,Z\in \Delta|_x$, $\xi\in \Delta^\perp|_x$, where $x\in M$,
\[
R^D(X,Y)Z&=D_X D_Y Z-D_Y D_X Z-D_{[X,Y]} Z, \\
R^{\perp}(X,Y)\xi&=D^{\perp}_X D^{\perp}_Y \xi-D^{\perp}_Y D^{\perp}_X \xi-D^{\perp}_{[X,Y]} \xi,
\]
then the following result holds.

\begin{corollary}
The Riemannian manifold $(M,g)$ is $\Delta$-horizontally flat
if and only if $\Delta$ is involutive and
for all $X,Y,U,V\in \Delta|_y$, $\xi,\eta\in\Delta^\perp|_y$ and $y\in M$,
\begin{align}
g(R^D(X,Y)U,V)&=g\big(\fII(X,U),\fII(Y,V)\big)-g\big(\fII(X,V),\fII(Y,U)\big), \label{eq:gauss} \\
g((\nabla_X \fII)(Y,U),\xi)&=g((\nabla_Y \fII)(X,U),\xi), \label{eq:codazzi} \\
g(R^\perp(X,Y)\xi,\eta)&=g(S_{\xi}(X),S_{\eta}(Y))-g(S_{\eta}(X),S_{\xi}(Y)). \label{eq:ricci}
\end{align}
\end{corollary}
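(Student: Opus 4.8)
The plan is to read the desired statement off the preceding Proposition. Since $\nabla$ is the Levi-Civita connection of $g$, it is torsion-free, so $T^{\nabla}\equiv 0$, and it is metric. Consequently the preceding characterization of $\Delta$-horizontal flatness reduces to the single requirement that $\Delta$ be involutive and that $R^{\nabla}(X,Y)=0$ as an endomorphism of $T_xM$ for all $x\in M$ and all $X,Y\in\Delta|_x$. The entire task is therefore to show that this vanishing of the curvature endomorphism ``along $\Delta$'' is equivalent to the simultaneous validity of \eqref{eq:gauss}, \eqref{eq:codazzi} and \eqref{eq:ricci}.

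First I would exploit that $\nabla$ is metric: for fixed $X,Y\in\Delta|_x$ the operator $R^{\nabla}(X,Y)$ is $g$-skew-symmetric on $T_xM$, so $R^{\nabla}(X,Y)=0$ is equivalent to $g(R^{\nabla}(X,Y)Z,W)=0$ for all $Z,W\in T_xM$. Next I would decompose $T_xM=\Delta|_x\oplus\Delta^{\perp}|_x$ by means of $P$ and $P^{\perp}$. By bilinearity in $(Z,W)$ and the skew-symmetry just noted, the condition splits into exactly three independent scalar families, according to whether $Z,W$ lie in $\Delta|_x$ or in $\Delta^{\perp}|_x$: the purely tangential case $Z,W\in\Delta|_x$, the mixed case $Z\in\Delta|_x$, $W\in\Delta^{\perp}|_x$, and the purely normal case $Z,W\in\Delta^{\perp}|_x$.

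The core step is to identify each of these three families with one of the displayed equations. To do so I would first record the Gauss and Weingarten formulas that follow directly from the definitions of $D$, $D^{\perp}$, $\fII$ and $S_{\xi}$: for $X$ tangent to $\Delta$, any $Y$ tangent to $\Delta$ and any $\xi$ tangent to $\Delta^{\perp}$ one has $\nabla_X Y=D_X Y+\fII(X,Y)$ and $\nabla_X\xi=S_{\xi}(X)+D^{\perp}_X\xi$, the sign in the latter being dictated by the convention $g(S_{\xi}X,Y)=-g(\xi,\fII(X,Y))$. Here the involutivity of $\Delta$ is used to guarantee $[X,Y]\in\Delta$ (and thereby the symmetry of $\fII$), so that the Gauss formula also applies to the term $\nabla_{[X,Y]}$. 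Expanding $R^{\nabla}(X,Y)U$ for $U\in\Delta|_x$ with these formulas and projecting, the $\Delta$-component tested against $V\in\Delta|_x$ yields precisely \eqref{eq:gauss}, while its $\Delta^{\perp}$-component tested against $\xi$ yields \eqref{eq:codazzi}. Expanding $R^{\nabla}(X,Y)\xi$ for $\xi\in\Delta^{\perp}|_x$ and pairing the result with $\eta\in\Delta^{\perp}|_x$ yields \eqref{eq:ricci}; the remaining mixed component reproduces \eqref{eq:codazzi} by skew-symmetry and so carries no new information. Since $\Delta$ is involutive its leaves are genuine immersed submanifolds, and these three identities are nothing but the classical Gauss, Codazzi and Ricci equations.

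I expect the only real obstacle to be the careful bookkeeping in the last step: keeping track of which orthogonal projection produces which equation, and verifying that the sign conventions induced by the given definitions of $\fII$ and $S_{\xi}$ make the computed expressions coincide term-by-term with the right-hand sides of \eqref{eq:gauss}--\eqref{eq:ricci}. Everything else is a direct consequence of the preceding Proposition together with the metric and torsion-free properties of the Levi-Civita connection.
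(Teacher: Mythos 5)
Your proposal is correct and follows essentially the same route as the paper: both reduce, via the preceding Proposition and the fact that the Levi-Civita connection is torsion-free and metric, to the statement that $R^{\nabla}(X,Y)=0$ for $X,Y\in\Delta$ is equivalent to the classical Gauss, Codazzi--Mainardi and Ricci equations of the leaves of the (involutive) distribution. The only difference is that the paper simply cites these equations from Besse's book (Theorem 1.72), whereas you propose to rederive them from the Gauss and Weingarten formulas and the splitting $T_xM=\Delta|_x\oplus\Delta^{\perp}|_x$; that is a matter of self-containedness rather than a different argument.
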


\begin{proof}
Indeed, if $L$ is a leaf of $\Delta$, $h$ is the metric on $L$ induced by $g$,
then $D$ is exactly the Levi-Civita connection of $h$,
and $\fII$ restricted to $L$ is the second fundamental form of $(L,h)$ in $(M,g)$.
The result follows from these observations combined (\cite[Theorem 1.72]{besse87}) with the Gauss, Codazzi-Mainardi and Ricci equations,
which are \eqref{eq:gauss}, \eqref{eq:codazzi} and \eqref{eq:ricci}, respectively.
\end{proof}

%%%%%%%%%%%%%%%%%%%%%%%%%%%%%%%%%%%%%%%%%%%%%%%%%%%%%%%%%%%%%%%%%%%%%%
\section{Case Study: Holonomy of Free Step-two Homogeneous Carnot Group}
%%%%%%%%%%%%%%%%%%%%%%%%%%%%%%%%%%%%%%%%%%%%%%%%%%%%%%%%%%%%%%%%%%%%%%
The goal of this section is to provide an example of a triple $(M,\nabla,\Delta)$ such that $\Delta$ verifies the LARC (and thus is completely controllable) and $H^{\;\nabla}_{\Delta}$ is a Lie group strictly included in $H^{\nabla}$. After giving the required definitions to treat the example, we first compute $H^{\nabla}$
and then $H^{\;\nabla}_{\Delta}$ using the rolling formalism.

\subsection{Definitions}
%%%%%%%%%%%%%%%%%%%%%%%%%%%%%%%%%%%%%%%%%%%%%%%%%%%

The affine manifold $(M,\nabla)$ we consider is the free step-two homogeneous Carnot group $\G$ endowed with a Riemannian metric and its Levi-Civita connection. To describe it, we will use the definitions of Jacobian basis, homogeneous group and Carnot group of Chapters 1 and 2 of \cite{BonfiglioliLanconelliUguzzoni}.

For  $m$ positive integer greater than or equal to $2$, set  $m+n$ where $n := m(m-1)/2$ and
$\I:= \{(h,k) \mid 1 \leq k< h \leq m\}$ of cardinal $n$. Let $S^{(h,k)}$ be the $m\times m$ real skew-symmetric matrix whose entries are $-1$ in the position $(h,k),$ $+1$ in the position $(k,h)$ and $0$ elsewhere.
On $\R^{m+n}$ where an arbitrary point is written $(v,\gamma)$ with $v \in \R^m$, and $ \gamma \in \R^n$, define the group law $\star$ by setting
\begin{equation}\label{e3.16}
(v,\gamma) \star (v',\gamma') =  \left (\begin{array}{c}
v_i +v'_i, \quad i=1,\dots,m\\
\gamma_{h,k} + \gamma'_{h,k}+ \frac{1}{2}(v_h v'_k - v_k v'_h), \quad (h,k)\in \I
\end{array}\right).
\end{equation}
Then it is easy to verify that $\G:=( \R^{m+n},\star)$ is a Lie group, more precisely a free step-two homogeneous Carnot group of $m$ generators.
Indeed, a trivial computation shows that the dilation $\delta_\lambda$ given by
\begin{equation}\label{e3.2}
\delta_\lambda: \R^{m+n} \rightarrow \R^{m+n}; \quad \delta_\lambda(v,\gamma)=(\lambda v, {\lambda}^2 \gamma),
\end{equation}
is an automorphism of $\G$ for every $\lambda > 0$.
On the other hand, the (Jacobian) basis of the Lie algebra $\mathfrak{g}$ of $\G$ is given by
$X_h$, $\Gamma_{h,k}$ where

\begin{align*}
 X_h &= \frac{\partial}{\partial v_h} + \frac{1}{2} \sum_{1 \leq j<i \leq m} \left (\begin{array}{c}
\sum_{l=1}^m {S^{(i,j)}_{h,l} v_l}
\end{array}\right)(\frac{\partial}{\partial \gamma_{i,j}}), \\
\\
&=  \left\{\begin{array}{lll}
\frac{\partial}{\partial v_1} + \frac{1}{2} \sum_{1 < i \leq m} v_i \frac{\partial}{\partial \gamma_{i,1}}  \quad & \hbox{if} \; h=1,\\
\\
\frac{\partial}{\partial v_h} + \frac{1}{2} \sum_{h < i \leq m} v_i \frac{\partial}{\partial \gamma_{i,h}} - \frac{1}{2} \sum_{1 \leq j < h} v_j
\frac{\partial}{\partial \gamma_{h,j}}  \quad & \hbox{if} \; 1<h<m,\\
\\
\frac{\partial}{\partial v_m} - \frac{1}{2} \sum_{1\leq j < m} v_j \frac{\partial}{\partial \gamma_{m,j}}  \quad & \hbox{if} \; h=m,
\end{array} \right.\\
\\
\Gamma_{h,k} &=\frac{\partial}{\partial \gamma_{h,k}}, \quad \quad (h,k) \in \I.
\end{align*}
while the Lie brackets on $\G = (\R^N, \star)$ are given by
\begin{align*}
[X_h,X_k] &= \sum_{1 \leq j<i \leq m} {S^{(i,j)}_{h,k} \frac{\partial}{\partial\gamma_{i,j}}} = \frac{\partial}{\partial\gamma_{h,k}}= \Gamma_{h,k} \\
[X_h,\Gamma_{i,j}]&=0,\ 
[\Gamma_{h,k},\Gamma_{i,j}]=0.
\end{align*}
Then,
$$ \hbox{rank}(Lie\{X_1,\dots,X_m\})=\hbox{dim}(\hbox{span}\big\{\frac{\partial}{\partial v_1},\dots,\frac{\partial}{\partial v_m},(\Gamma_{h,k})_{(h,k) \in \I}\big\}) = N = \dim \mathfrak{g}.
$$
Therefore, we can conclude that $\G$ is a homogeneous Carnot group of step $2$ and $m$ generators $X_1,\dots,X_m$. The Lie algebra $\mathfrak{g}$ is equal to $V_1 \oplus V_2$, where $V_1 = \text{ span} \{X_1 , \dots, X_m\}$ and $V_2 = \text{ span} \{\Gamma_{h,k}, \; (h,k) \in \I\}$.

Moreover, $(\G,g)$ is an analytic manifold where the metric $g$, with respect to the previous basis, is given by
\begin{align}\label{e3.24}
\begin{array}{lll}
&g(X_i,X_j) = \delta_{i,j}, \quad & \text{if} \; \; i,j \in \{1,\dots,m\},\\
\\
&g(X_i, \Gamma_{h,k})=0 , \quad & \text{if} \;\;  i \in \{1,\dots,m\} \; \; \text{and} \;\; (h,k)\in \I,\\
\\
&g(\Gamma_{h,k},\Gamma_{i,j})=\delta_{h,i} \delta_{k,j}, \quad & \text{if} \; \; (i,j), (h,k) \in \I.
\end{array}
\end{align}

In the sequel of this article, we find useful to introduce the following notation of vector fields instead of $\Gamma_{h,k}$, for $ h,k \in \{1,\dots,m\}$, in order to facilitate computations by avoiding the confusion between the two cases $k <h$ and $h <k$.

\begin{definition}
For every $ h,k \in \{1,\dots,m\}$, we define,
\begin{equation}\label{e3.17}
\Omega_{h,k} = \left\{\begin{array}{cl}
\Gamma_{h,k} \quad & \text{if}\; \; h>k,\\
- \Gamma_{k,h} \quad & \text{if} \; \; h<k,\\
0 \quad & \text{if} \; \; h=k.
\end{array} \right.
\end{equation}
\end{definition}

By the above definition, the Lie bracket $[X_h,X_k]$ is equal to $\Omega_{h,k},$ for any $ h,k \in \{1,...,m\}$. Furthermore, let $\nabla^g$ be the Levi-Civita connection associated to the Riemannian metric in \eqref{e3.24}.

\begin{lemma}\label{l3.1}
For $h,k,l,s,t \in \{1,\dots,m\}$, we have the following covariant derivatives on $(\G,g)$,
\begin{align*}
&\nabla^g_{X_h} X_k = \frac{1}{2} \Omega_{h,k},& &\nabla^g_{\Omega_{h,k}}{\Omega_{s,t}} = 0, \\
&\nabla^g_{X_l}{\Omega_{h,k}} = \frac{1}{2} (\delta_{kl}X_h - \delta_{hl}X_k),& &\nabla^g_{\Omega_{h,k}}{X_l} = \frac{1}{2} (\delta_{kl}X_h - \delta_{hl}X_k).
\end{align*}
\end{lemma}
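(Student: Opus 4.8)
\emph{Plan.} The connection $\nabla^g$ is the unique torsion-free metric connection, so every covariant derivative between frame fields is pinned down by the Koszul formula. Since $\{X_1,\dots,X_m\}\cup\{\Gamma_{h,k}\}_{(h,k)\in\I}$ is a $g$-orthonormal frame by \eqref{e3.24}, all the metric coefficients $g(\cdot,\cdot)$ between frame fields are \emph{constant} (equal to $0$ or $1$). Consequently, whenever $E,F,G$ are chosen among the frame fields, the three ``derivative of the metric'' terms in Koszul's identity drop out and one is left with
\[
2g(\nabla^g_E F,G)=g([E,F],G)-g([E,G],F)-g([F,G],E).
\]
Thus the whole computation reduces to inserting the known brackets $[X_h,X_k]=\Omega_{h,k}$, $[X_h,\Gamma_{i,j}]=0$ and $[\Gamma_{h,k},\Gamma_{i,j}]=0$ (equivalently $[X_l,\Omega_{h,k}]=0$ and $[\Omega_{h,k},\Omega_{i,j}]=0$, since each $\Omega$ is $\pm\Gamma$ or $0$), and then reading off the answer by testing against each frame vector $G$ in turn.

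First I would record the single auxiliary identity
\[
g(\Omega_{a,b},\Omega_{c,d})=\delta_{ac}\delta_{bd}-\delta_{ad}\delta_{bc},
\]
which follows from \eqref{e3.24} and \eqref{e3.17} by checking the cases $a>b$ and $a<b$ (and likewise for $c,d$); the antisymmetry $\Omega_{a,b}=-\Omega_{b,a}$ makes all cases collapse to this one formula. With this in hand each of the four assertions is a short calculation. For $\nabla^g_{X_h}X_k$, testing against $X_l$ gives $0$ (all three brackets land in $V_2\perp V_1$), while testing against $\Gamma_{i,j}$ leaves only $g([X_h,X_k],\Gamma_{i,j})=g(\Omega_{h,k},\Gamma_{i,j})$, yielding $\nabla^g_{X_h}X_k=\frac12\Omega_{h,k}$. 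For $\nabla^g_{\Omega_{h,k}}\Omega_{s,t}$ every bracket that can appear is of the form $[\Gamma,\Gamma]$ or $[\Gamma,X]$, hence $0$ against every $G$, giving $\nabla^g_{\Omega_{h,k}}\Omega_{s,t}=0$. For $\nabla^g_{X_l}\Omega_{h,k}$ the only surviving term is $-g([X_l,X_s],\Omega_{h,k})=-g(\Omega_{l,s},\Omega_{h,k})$ when $G=X_s$, and $0$ when $G=\Gamma_{i,j}$; the auxiliary identity then produces exactly $\frac12(\delta_{kl}X_h-\delta_{hl}X_k)$.

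The fourth formula needs no new Koszul computation: since $\nabla^g$ is torsion-free and $[\Omega_{h,k},X_l]=0$, one has $\nabla^g_{\Omega_{h,k}}X_l=\nabla^g_{X_l}\Omega_{h,k}$, which is the expression just computed. The only point demanding care is the sign bookkeeping across the mixed index ranges $h>k$ versus $h<k$; but this is precisely what the notation $\Omega_{h,k}$ of \eqref{e3.17} was introduced to absorb, so once the bilinear identity for $g(\Omega_{a,b},\Omega_{c,d})$ is established the remaining steps are mechanical and case-free. I therefore expect no genuine obstacle beyond this bookkeeping; as a sanity check one can verify metric compatibility (e.g. $X_h g(X_k,X_k)=0$ matches $2g(\nabla^g_{X_h}X_k,X_k)=g(\Omega_{h,k},X_k)=0$) and the symmetry relation $\nabla^g_{X_h}X_k-\nabla^g_{X_k}X_h=[X_h,X_k]=\Omega_{h,k}$, both of which confirm the normalization $\frac12$.
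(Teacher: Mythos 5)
Your proof is correct and follows essentially the same route as the paper's: both expand $\nabla^g_XY$ in the orthonormal frame and determine the coefficients from Koszul's formula, which reduces to the three bracket terms because the frame's inner products are constant. Your write-up merely makes explicit the auxiliary identity $g(\Omega_{a,b},\Omega_{c,d})=\delta_{ac}\delta_{bd}-\delta_{ad}\delta_{bc}$ and the torsion-freeness shortcut for the fourth formula, details the paper leaves to the reader.
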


\begin{proof}
Let us denote by $\nabla^g_X Y$ the covariant differential of a vector field $Y$ in the direction of another vector field $X$ on $\G$. It is equal to
\begin{align}\label{e3.25}
\nabla^g_XY = \sum_{h=1}^{m} {\alpha_h(X,Y) X_h} + \sum_{1 \leq k<h \leq m}{\beta_{(h,k)}(X,Y) \Omega_{h,k}}.
\end{align}
On the other hand, by Koszul's formula (cf. \cite{Sakai}), we have
\begin{align}\label{e3.26}
2 g (\nabla^g_X Y , Z) = g ([X,Y] ,Z) - g ([X,Z],Y) - g ([Y,Z],X).
\end{align}
Combining \eqref{e3.25} and \eqref{e3.26}, we easily find the coefficients $\alpha_h (X,Y)$ and $\beta_{(h,k)} (X,Y)$ and hence we obtain the claim.
\end{proof}

%%%%%%%%%%%%%%%%%%%%%%%%%%%%%%%%%%%%%%%%%%%%%%%%%%%%%%%%%%%%%%%%
\subsection{Riemannian Holonomy Group of $(\G,g)$}
%%%%%%%%%%%%%%%%%%%%%%%%%%%%%%%%%%%%%%%%%%%%%%%%%%%%%%%%%%%%%%%%

The main of this subsection is to prove the following theorem.

\begin{theorem}\label{t3.2}
Let $(\G,\nabla^g)$ be a free step-two homogeneous Carnot group of dimension $N$ endowed with the Levi-Civita connection $\nabla^g$ given in Lemma \ref{l3.1}. Then, $(\G,\nabla^g)$ has full holonomy group $H^{\nabla^g} = \SO(m+n)$.
\end{theorem}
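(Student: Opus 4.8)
\emph{Strategy.} The plan is to compute the holonomy Lie algebra $\mathfrak{hol}\subseteq\mathfrak{so}(T_e\G)$ at the identity $e\in\G$ and to show that it exhausts $\mathfrak{so}(m+n)$. Since $\G$ is diffeomorphic to $\R^{m+n}$, hence simply connected, the holonomy group $H^{\nabla^g}$ is connected, so establishing $\mathfrak{hol}=\mathfrak{so}(m+n)$ immediately yields $H^{\nabla^g}=\SO(m+n)$. By the Ambrose--Singer holonomy theorem (cf. \cite{KobayashiNomizu}), $\mathfrak{hol}$ is a Lie subalgebra of $\mathfrak{so}(T_e\G)$ containing every curvature endomorphism $R^{\nabla^g}(u,v)$, $u,v\in T_e\G$. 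Because the metric \eqref{e3.24} is left-invariant, all these operators are determined by their values at $e$, which are computed directly from Lemma~\ref{l3.1}. It therefore suffices to produce enough curvature operators that the Lie algebra they generate is all of $\mathfrak{so}(m+n)$.

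I would exploit the $g$-orthogonal splitting $T_e\G=V_1\oplus V_2$, with $V_1=\mathrm{span}\{X_i\}$ and $V_2=\mathrm{span}\{\Omega_{h,k}\}$, together with the induced block decomposition $\mathfrak{so}(m+n)=\mathfrak{so}(V_1)\oplus\mathfrak{m}\oplus\mathfrak{so}(V_2)$, where $\mathfrak{m}\cong V_1\otimes V_2$ consists of the ``off-diagonal'' skew operators exchanging $V_1$ and $V_2$. The only curvature genuinely needed is the mixed one: using $[X_i,\Omega_{s,t}]=0$ and $\nabla^g_{\Omega}\Omega=0$ from Lemma~\ref{l3.1}, one finds
\begin{equation*}
R^{\nabla^g}(X_i,\Omega_{s,t})X_k=\tfrac14\big(\delta_{kt}\,\Omega_{i,s}-\delta_{ks}\,\Omega_{i,t}\big),
\end{equation*}
so each $R^{\nabla^g}(X_i,\Omega_{s,t})$ is an off-diagonal operator sending $V_1$ into $V_2$. (The operators $R^{\nabla^g}(X_i,X_j)$ and $R^{\nabla^g}(\Omega_{a,b},\Omega_{c,d})$ are block-diagonal and are not even needed for the lower bound.)

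The core step is to show that these mixed curvatures span the entire off-diagonal block $\mathfrak{m}=V_1\otimes V_2$. Writing the associated tensor as $g_{i,s,t}=X_t\otimes\Omega_{i,s}-X_s\otimes\Omega_{i,t}$, two specializations suffice: setting $s=i$ gives $X_i\otimes\Omega_{i,t}$, handling the repeated-index elements, while for distinct $i,s,t$ the cyclic sum $g_{i,s,t}+g_{s,t,i}+g_{t,i,s}$ reproduces the totally antisymmetric vector $X_i\otimes\Omega_{s,t}+X_s\otimes\Omega_{t,i}+X_t\otimes\Omega_{i,s}$; subtracting $g_{i,s,t}$ then isolates each generic $X_i\otimes\Omega_{s,t}$. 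Together these exhaust a basis of $V_1\otimes V_2$, the degenerate case $m=2$ (where $n=1$) being checked by hand. Hence $\mathfrak{hol}\supseteq\mathfrak{m}$.

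Finally I would invoke the symmetric-pair structure of $\mathfrak{so}(m+n)=\mathfrak{so}(V_1)\oplus\mathfrak{m}\oplus\mathfrak{so}(V_2)$: bracketing two off-diagonal operators $X_a\otimes\xi$ and $X_b\otimes\eta$ yields, for $\xi=\eta$, the generator $X_a\wedge X_b$ of $\mathfrak{so}(V_1)$, and for $X_a=X_b$ the generator $\xi\wedge\eta$ of $\mathfrak{so}(V_2)$, so that $[\mathfrak{m},\mathfrak{m}]=\mathfrak{so}(V_1)\oplus\mathfrak{so}(V_2)$. As $\mathfrak{hol}$ is a Lie algebra containing $\mathfrak{m}$, it therefore contains $\mathfrak{m}\oplus[\mathfrak{m},\mathfrak{m}]=\mathfrak{so}(m+n)$, forcing $\mathfrak{hol}=\mathfrak{so}(m+n)$ and thus $H^{\nabla^g}=\SO(m+n)$. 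I expect the spanning claim of the previous paragraph to be the only delicate point, the passage from $\mathfrak{m}$ to all of $\mathfrak{so}(m+n)$ being the standard Grassmannian symmetric-pair computation.
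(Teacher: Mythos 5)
Your proof is correct, but it follows a genuinely different route from the paper. The paper works entirely inside the rolling formalism: Proposition~\ref{p3.7} computes iterated Lie brackets of the lifts $\lr(X_h)$, $\lr(\Omega_{h,k})$ and of vertical fields on the state space $Q$, shows that the whole vertical fiber $\nu\big(A\,\mathfrak{so}(T_x\G)\big)$ is tangent to the orbit $\odr(q_0)$, and then concludes via complete controllability of the rolling system (Corollary 5.21 of \cite{ChitourKokkonen}) and Theorem 4.3 of \cite{ChitourKokkonen2}. The authors do remark that one could instead invoke Ozeki's theorem on the infinitesimal holonomy algebra (curvature together with all its covariant derivatives); your argument is essentially the Ambrose--Singer incarnation of that remark, and it is leaner in two respects: you only need the curvature tensor itself, and in fact only the mixed block $R^{\nabla^g}(X_i,\Omega_{s,t})$ (which matches the paper's formula $R(X_l,\Omega_{h,k})=\tfrac14(X_h\wedge\Omega_{k,l}+X_k\wedge\Omega_{l,h})$ of Lemma~\ref{l3.2} after relabelling), since your repeated-index and cyclic-sum specializations do span all of $\mathfrak{m}\cong V_1\otimes V_2$ and the symmetric-pair identity $\mathfrak{m}\oplus[\mathfrak{m},\mathfrak{m}]=\mathfrak{so}(m+n)$ finishes the job. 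Your appeal to simple connectedness of $\G\cong\R^{m+n}$ to get connectedness of $H^{\nabla^g}$ is also the right way to pass from the Lie algebra to the group. What the paper's heavier approach buys is reusability: the same bracket computations on $Q$ are the ones needed in the next subsection to determine the \emph{horizontal} holonomy group $H^{\;\nabla}_\Delta$, where no Ambrose--Singer-type theorem is available and one must work with the orbit of the subdistribution $\Dr$ directly.
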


To this end, we compute the Riemannian tensor curvature $R$ and as well as part of its covariant derivation of $(\G,\nabla^g)$.

\begin{lemma}\label{l3.2}
For any $h,k,l,i,j \in \{1,\dots,m\}$, the Riemannian curvature tensor $R$ of $(\G, \nabla^g))$ is given by the following skew-symmetric matrices,
\begin{align}
&R(X_h , X_k) = \frac{3}{4} (X_h \wedge  X_k) + \frac{1}{4} \sum_{j=1}^{m} \Omega_{h,j} \wedge \Omega_{k,j}, \label{e3.5}
\\
&R(X_l,\Omega_{h,k}) = \frac{1}{4} ( X_h \wedge \Omega_{k,l} + X_k \wedge \Omega_{l,h} ), \label{e3.6}
\\
&R (\Omega_{i,j} , \Omega_{h,k}) = \frac{1}{4} (\delta_{ik} X_h \wedge X_j + \delta_{jk} X_i \wedge X_h + \delta_{ih} X_j \wedge X_k + \delta_{jh} X_k \wedge X_i). \label{e3.7}
\end{align}
\end{lemma}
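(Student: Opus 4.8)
The plan is to compute $R$ directly from its definition
$R^{\nabla^g}(X,Y)Z = \nabla^g_X\nabla^g_Y Z - \nabla^g_Y\nabla^g_X Z - \nabla^g_{[X,Y]}Z$,
feeding in the covariant derivatives of Lemma~\ref{l3.1} together with the bracket relations $[X_h,X_k]=\Omega_{h,k}$, $[X_h,\Omega_{i,j}]=0$ and $[\Omega_{i,j},\Omega_{h,k}]=0$. Since $\{X_1,\dots,X_m\}\cup\{\Omega_{h,k}\}_{k<h}$ is a global $g$-orthonormal frame and $\nabla^g$ is metric, each $R(X,Y)$ is a $g$-skew-symmetric endomorphism of the tangent space, hence a linear combination of the elementary skew endomorphisms $U\wedge V$ acting by $(U\wedge V)W = g(U,W)V - g(V,W)U$. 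It therefore suffices to evaluate $R(X,Y)$ on the frame vectors and read off the coefficients; by bilinearity and antisymmetry of $R$ in its first two slots, the three cases $(X_h,X_k)$, $(X_l,\Omega_{h,k})$ and $(\Omega_{i,j},\Omega_{h,k})$ exhaust all types of pairs.

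First I would record the ingredients used repeatedly: the identities $\nabla^g_{X_h}X_k=\tfrac12\Omega_{h,k}$, $\nabla^g_{X_l}\Omega_{h,k}=\nabla^g_{\Omega_{h,k}}X_l=\tfrac12(\delta_{kl}X_h-\delta_{hl}X_k)$ and $\nabla^g_{\Omega_{h,k}}\Omega_{s,t}=0$ from Lemma~\ref{l3.1}, together with the inner products $g(\Omega_{h,j},\Omega_{s,t})=\delta_{hs}\delta_{jt}-\delta_{ht}\delta_{js}$ and the antisymmetry $\Omega_{h,k}=-\Omega_{k,h}$. Then, for \eqref{e3.5}, I would evaluate $R(X_h,X_k)X_l$ and $R(X_h,X_k)\Omega_{s,t}$ separately: the first is a two-step iteration of the $\nabla^g X X$ and $\nabla^g X\Omega$ rules, corrected by the bracket term $\nabla^g_{\Omega_{h,k}}X_l$, and collecting the Kronecker deltas gives $\tfrac34(\delta_{hl}X_k-\delta_{kl}X_h)=\tfrac34(X_h\wedge X_k)X_l$; the second produces only vertical components which, after summing over $j$ and using the inner-product formula, reproduce $\tfrac14\sum_j(\Omega_{h,j}\wedge\Omega_{k,j})\Omega_{s,t}$. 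The mixed case \eqref{e3.6} and the purely vertical case \eqref{e3.7} proceed identically, the latter simplified by $\nabla^g_{\Omega}\Omega=0$, so that only the iterated $\nabla^g_{\Omega}X$ terms contribute and the bracket term drops out entirely.

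As a running consistency check I would verify the first Bianchi identity and the pair-symmetry $g(R(A,B)C,D)=g(R(C,D)A,B)$, which over-determine the coefficients and immediately expose sign slips. The main obstacle I anticipate is combinatorial rather than conceptual: in \eqref{e3.7} the two surviving second-order terms each expand, through $g(\Omega_{h,j},\Omega_{s,t})=\delta_{hs}\delta_{jt}-\delta_{ht}\delta_{js}$ and the antisymmetry of $\Omega$, into several delta-monomials that must be matched carefully against the four terms $\delta_{ik}X_h\wedge X_j+\delta_{jk}X_i\wedge X_h+\delta_{ih}X_j\wedge X_k+\delta_{jh}X_k\wedge X_i$ on the right. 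Keeping the index ranges and the $h\lessgtr k$ sign conventions of $\Omega_{h,k}$ consistent, rather than any real difficulty, is where the care is required.
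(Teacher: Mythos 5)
Your proposal is correct and follows essentially the same route as the paper: a direct evaluation of $R(X,Y)Z$ from its definition using the covariant derivatives of Lemma~\ref{l3.1} and the bracket relations, evaluated on the orthonormal frame and then repackaged into the wedge-product form. The Bianchi/pair-symmetry consistency checks you add are sensible but not part of the paper's argument, which simply collects the frame evaluations.
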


\begin{proof}
From Lemma \ref{l3.1} and the intrinsic definition of $R$,
$$
R(X,Y)Z = \nabla^g_X \nabla^g_Y Z - \nabla^g_Y \nabla^g_X Z - \nabla^g_{[X,Y]}Z, \quad \quad \forall X,\; Y,\; Z \in T_x \G,
$$
we get, for any $h,k,l,i,j \in \{1, \dots, m\}$,
\begin{eqnarray*}
\begin{array}{l}
R(X_h, X_k)X_l = \frac{3}{4} (\delta_{hl}X_k - \delta_{kl}X_h  ) , \\
\\
R(X_h, X_k)\Omega_{i,j}=\frac{1}{4} ( \delta_{ih} \Omega_{k,j} + \delta_{jh} \Omega_{i,k} - \delta_{ik} \Omega_{h,j} - \delta_{jk} \Omega_{i,h}).
\end{array}
\end{eqnarray*}
Similarly, for any $h,k,l, i,j,t \in \{1,\dots,m\}$, $R(X_l, \Omega_{h,k})$ is given by
\begin{eqnarray*}
\begin{array}{l}
R(X_l,\Omega_{h,k})X_t =\frac{1}{4} ( \delta_{th} \Omega_{k,l} - \delta_{tk} \Omega_{h,l} ),\\
\\
R(X_l,\Omega_{h,k})\Omega_{i,j}=\frac{1}{4} ((\delta_{jk}\delta_{il}-\delta_{jl}\delta_{ki})X_h
+(\delta_{jl}\delta_{hi}-\delta_{il}\delta_{jh})X_k).
\end{array}
\end{eqnarray*}
Finally, for any $i,j,h,k,l \in \{1,\dots,m\}$, $R(\Omega_{i,j},\Omega_{h,k})$ is given by
\begin{align*}
R(\Omega_{i,j},\Omega_{h,k})X_l = \frac{1}{4} & \big( (\delta_{lk}\delta_{jh} - \delta_{hl}\delta_{jk} ) X_i + ( \delta_{hl}\delta_{ik} - \delta_{lk}\delta_{ih} ) X_j\\
&  + (\delta_{il}\delta_{jk} - \delta_{jl}\delta_{ik} ) X_h + ( \delta_{lj}\delta_{ih} - \delta_{hj}\delta_{il} ) X_k \big),\\
\\
R(\Omega_{i,j},\Omega_{h,k})\Omega_{s,t}=0, &  \quad \quad \forall s,t \in \{1,\dots,m\}.
\end{align*}
Collecting the above equalities, we get Eq. \eqref{e3.5}, Eq. \eqref{e3.6} and Eq. \eqref{e3.7}.
\end{proof}

Using the definition of the covariant derivative of tensors, which is,
$$
(\nabla^g_ZR(X,Y))(W) = \nabla^g_Z(R(X,Y)W) - R(X,Y)\nabla^g_Z W, \quad \quad \forall X,\; Y,\; Z,\; W \in T_x \G,
$$
we deduce the following lemma.

\begin{lemma}\label{l3.3}
The covariant derivatives of $R$ in the direction of a vector fields $X_t$ on $\G$, for $t \in \{1,\dots,m\}$, are
\begin{align*}
&\nabla^g_{X_t} R(X_h,X_k)= -R(X_t, \Omega_{h,k}) + \frac{1}{8}  \sum_{j=1}^{m} \big( \delta_{kt} X_j \wedge \Omega_{h,j} - \delta_{ht} X_j \wedge \Omega_{k,j} \big),\\
&\nabla^g_{X_t} R(X_l,\Omega_{h,k})= \frac{1}{8} \big( \Omega_{t,h} \wedge \Omega_{k,l}  + \Omega_{t,k} \wedge \Omega_{l,h} + 2 \delta_{lt} X_h \wedge X_k + \delta_{ht} X_k \wedge X_l - \delta_{kt} X_h \wedge X_l  \big),\\
&\nabla^g_{X_t} R(\Omega_{i,j},\Omega_{h,k})= \frac{1}{8} \big(  \delta_{ik} R(X_j, \Omega_{h,t}) + \delta_{jk} R(X_h, \Omega_{i,t}) + \delta_{ih} R(X_k, \Omega_{j,t}) +\delta_{jh} R(X_i, \Omega_{k,t})\big),
\end{align*}
where $h,k,l,i,j$ are any integers in $\{1,\dots,m\}$.

Similarly, the covariant derivatives of $R$ in the direction of a vector fields $\Omega_{s,t}$ on $\G$, for every $s,t \in \{1,\dots,m\}$, are
\begin{align*}
\nabla^g_{\Omega_{s,t}} R(X_h,X_k)&= \frac{3}{8} \big( \delta_{th} X_s \wedge X_k - \delta_{sh} X_t \wedge X_k + \delta_{tk} X_h \wedge X_s - \delta_{sk} X_h \wedge X_t \big),\\
\nabla^g_{\Omega_{s,t}} R(X_l,\Omega_{h,k})&= \frac{1}{8} \big( \delta_{th} X_s \wedge \Omega_{k,l} - \delta_{sh} X_t \wedge \Omega_{k,l} + \delta_{tk} X_s \wedge \Omega_{l,h} - \delta_{sk} X_t \wedge \Omega_{l,h}\big),\\
\nabla^g_{\Omega_{s,t}} R(\Omega_{i,j},\Omega_{h,k})
&= \frac{1}{8} \big( (\delta_{ih}\delta_{tk}-\delta_{ik}\delta_{th}) X_j \wedge X_s + (\delta_{ik}\delta_{jt}-\delta_{jk}\delta_{ti}) X_h \wedge X_s \\
& \quad \quad + (\delta_{jh}\delta_{ti}-\delta_{ih}\delta_{tj}) X_k \wedge X_s + (\delta_{jk}\delta_{th}-\delta_{jh}\delta_{tk}) X_i \wedge X_s \\
& \quad \quad - (\delta_{ik}\delta_{js}-\delta_{jk}\delta_{is}) X_h \wedge X_t - (\delta_{jk}\delta_{sh}-\delta_{jh}\delta_{sk}) X_i \wedge X_t \\
& \quad \quad -(\delta_{jh}\delta_{si}-\delta_{ih}\delta_{sj}) X_k \wedge X_t -(\delta_{ih}\delta_{sk}-\delta_{ik}\delta_{sh}) X_j \wedge X_t \big),
\end{align*}
where $h,k,l,i,j$ are any integers in $\{1,\dots,m\}$.
\end{lemma}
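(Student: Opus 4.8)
The plan is to reduce all six identities to a single Leibniz-type rule for the covariant derivative acting on skew-symmetric endomorphisms written as wedge products. Recall that for $U,V\in T_x\G$ the endomorphism $U\wedge V$ is characterized by $(U\wedge V)W=g(U,W)V-g(V,W)U$, which is precisely the convention implicit in the statement of Lemma \ref{l3.2} (reading off $R(X_h,X_k)X_l$ from \eqref{e3.5}). Because $\nabla^g$ is the Levi-Civita connection, hence metric, the identity $Z\big(g(U,W)\big)=g(\nabla^g_Z U,W)+g(U,\nabla^g_Z W)$ gives after a one-line check the Leibniz rule
\[
\nabla^g_Z(U\wedge V)=(\nabla^g_Z U)\wedge V+U\wedge(\nabla^g_Z V),
\]
where the covariant derivative of an endomorphism field $E$ is understood through $(\nabla^g_Z E)W=\nabla^g_Z(EW)-E(\nabla^g_Z W)$, i.e. exactly the operation written in the display just preceding the lemma. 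This is the only structural fact needed.

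First I would substitute the curvature formulas \eqref{e3.5}, \eqref{e3.6} and \eqref{e3.7} into the left-hand sides and expand by the Leibniz rule above, differentiating each wedge factor in turn. The only inputs are the four first-order derivatives of Lemma \ref{l3.1}, namely $\nabla^g_{X_t}X_h=\frac12\Omega_{t,h}$, $\nabla^g_{X_t}\Omega_{h,k}=\frac12(\delta_{kt}X_h-\delta_{ht}X_k)$, $\nabla^g_{\Omega_{s,t}}X_l=\frac12(\delta_{tl}X_s-\delta_{sl}X_t)$ and $\nabla^g_{\Omega_{s,t}}\Omega_{h,k}=0$. For the model case $\nabla^g_{X_t}R(X_h,X_k)$, differentiating $\frac34 X_h\wedge X_k+\frac14\sum_j\Omega_{h,j}\wedge\Omega_{k,j}$ produces, after using the antisymmetry relations $\Omega_{a,b}=-\Omega_{b,a}$ and $\Omega_{a,a}=0$, a combination of terms of type $X\wedge\Omega$ together with two $\delta$-weighted sums $\sum_j X_j\wedge\Omega_{\cdot,j}$; comparing the $X\wedge\Omega$ part with \eqref{e3.6} lets me recognize it as $-R(X_t,\Omega_{h,k})$, leaving exactly the residual sums claimed. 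The remaining five identities follow by the same procedure; in the three $\Omega_{s,t}$-directions the vanishing $\nabla^g_{\Omega_{s,t}}\Omega=0$ annihilates every contribution from wedge factors that are $\Omega$'s, which is precisely why those formulas contain only $X\wedge X$ and $X\wedge\Omega$ terms.

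The computation is thus mechanical once the Leibniz rule is in place, so the work is bookkeeping rather than conceptual. The hard part will be exactly that bookkeeping: keeping signs consistent through $\Omega_{a,b}=-\Omega_{b,a}$, correctly merging the coefficients $\frac34$ and $\frac14$ coming from the two parts of $R(X_h,X_k)$ (these conspire to produce the clean factor $\frac18$ and to let the $X\wedge\Omega$ remainder collapse into the single term $-R(X_t,\Omega_{h,k})$), and handling the contractions in $\sum_j\Omega_{h,j}\wedge\Omega_{k,j}$ so that free and summed indices are never confused. A secondary care point is to verify that every output regroups back into the reduced basis, i.e. that the spurious $X\wedge X$, $X\wedge\Omega$ and $\Omega\wedge\Omega$ pieces either cancel or reassemble into the stated skew-symmetric matrices and into the curvature terms $R(X_j,\Omega_{\cdot,t})$ appearing on the right-hand side of the last formula in each block.
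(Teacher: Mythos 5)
Your proposal is correct and follows essentially the same route as the paper, which likewise obtains the lemma by direct computation from the definition $(\nabla^g_Z E)W=\nabla^g_Z(EW)-E(\nabla^g_Z W)$ together with Lemmas \ref{l3.1} and \ref{l3.2}; your Leibniz rule $\nabla^g_Z(U\wedge V)=(\nabla^g_Z U)\wedge V+U\wedge(\nabla^g_Z V)$ is just a convenient repackaging of that definition using metricity. I checked the first and fourth identities against your recipe and they come out exactly as stated (for instance the $\frac{3}{4}$ and $\frac{1}{4}$ coefficients combine via $\Omega_{a,b}=-\Omega_{b,a}$ to give $\frac{1}{4}\big(X_h\wedge\Omega_{t,k}+\Omega_{t,h}\wedge X_k\big)=-R(X_t,\Omega_{h,k})$ plus the $\frac{1}{8}$ remainder), so the remaining work is indeed only bookkeeping.
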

We next deduce from the two previous lemma the main computational result of the section.
\begin{proposition}\label{p3.7}
Fix some $q_0 \in Q(\G,\R^N)$ and let $\q \in \odr (q_0)$, then $\SO(T_x M) \subset \odr (q_0)$.
\end{proposition}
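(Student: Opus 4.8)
The plan is to reduce the statement to the purely infinitesimal assertion that the \emph{holonomy algebra} of $(\G,\nabla^g)$ at $x$ is all of $\mathfrak{so}(T_x\G)$, and then to verify this last fact by a linear-algebra computation fed by Lemmas~\ref{l3.2} and~\ref{l3.3}. Since by the Orbit Theorem the tangent space $T_q\odr(q_0)$ contains the evaluation at $q$ of the Lie algebra generated by the vector fields tangent to $\dr$, it suffices to fill the vertical subspace $\{\nu(AC)\onq \mid C\in\mathfrak{so}(T_x\G)\}$ of $T_q\odr(q_0)$: these are exactly the infinitesimal generators of the $\SO$-part of the structure-group action on the fiber, so their presence in $T_q\odr(q_0)$ is equivalent to $\SO(T_x\G)\subset\odr(q_0)$. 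First I would note that, since $\nabla^g$ is the Levi-Civita connection, its torsion vanishes, so the bracket formula \eqref{e3.33} collapses to
\[
\lr([Z,S])\onq+\nu(AR^{\nabla}(Z,S))\onq=[\lr(Z),\lr(S)]\onq,
\]
whence every curvature vertical $\nu(AR^{\nabla}(Z,S))\onq$ lies in $T_q\odr(q_0)$.

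Next I would generate the covariant derivatives of the curvature. Bracketing a rolling lift $\lr(Z)$ against a curvature vertical and using \eqref{e3.13} together with \eqref{e3.23} (which identifies the no-spin derivative $\lns(\cdot)\,U$ with $\nablabar_{(\cdot)}U$), the vertical component produced is $\nu\big(A(\nabla^g_Z R^{\nabla})(X,Y)\big)\onq$, modulo terms already tangent to the orbit. Recalling from \eqref{e3.14} that the verticals are closed under brackets, one sees that the set $\mathfrak{h}:=\{C\in\mathfrak{so}(T_x\G)\mid \nu(AC)\onq\in T_q\odr(q_0)\}$ is a linear subspace (in fact a subalgebra) containing all the operators $R(X,Y)$ and $(\nabla^g_Z R)(X,Y)$. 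Thus the proposition follows once one shows that these first-order data already span $\mathfrak{so}(m+n)$.

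The remaining step is linear algebra, and it is where I expect the bookkeeping to be heaviest. In the orthonormal frame $\{X_1,\dots,X_m\}\cup\{\Omega_{h,k}\}_{(h,k)\in\I}$, the space $\mathfrak{so}(m+n)$ is spanned by the simple bivectors $X_a\wedge X_b$, $X_a\wedge\Omega_{h,k}$ and $\Omega_{i,j}\wedge\Omega_{h,k}$, and the task is to realize each of these as a rational combination of the expressions in Lemmas~\ref{l3.2} and~\ref{l3.3}. Concretely, a suitable choice of indices in \eqref{e3.7} isolates every $X_a\wedge X_b$ (for instance $R(\Omega_{i,j},\Omega_{h,k})$ with exactly one coincidence among the four indices); equation \eqref{e3.6} then yields the mixed bivectors $X_a\wedge\Omega_{h,k}$ after solving the small two-term linear system on its right-hand side; and finally $\nabla^g_{X_t}R(X_l,\Omega_{h,k})$ from Lemma~\ref{l3.3}, once its $X\wedge X$ part is removed using the bivectors already obtained, delivers the pure terms $\Omega_{t,h}\wedge\Omega_{k,l}+\Omega_{t,k}\wedge\Omega_{l,h}$, which can be disentangled to give each $\Omega_{i,j}\wedge\Omega_{h,k}$.

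The main obstacle is precisely this disentangling, carried out uniformly in $m$: one must check that the index ranges leave enough freedom to separate the summed terms (notably the sum $\sum_j\Omega_{h,j}\wedge\Omega_{k,j}$ in \eqref{e3.5}), and that the low-rank cases $m=2$ and $m=3$, where very few $\Omega$-indices are available, are still covered by the argument. Once the span is shown to be all of $\mathfrak{so}(m+n)$, we conclude $\mathfrak{h}=\mathfrak{so}(T_x\G)$, so the fiber $\pi_{Q,M}^{-1}(x)\cap\odr(q_0)$ contains a full copy of $\SO(T_x\G)$, which is the claimed inclusion $\SO(T_x\G)\subset\odr(q_0)$.
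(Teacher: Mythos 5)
Your overall strategy --- fill the vertical subspace $\{\nu(AC)\onq \mid C\in\mathfrak{so}(T_x\G)\}$ of $T_q\odr(q_0)$ by showing that the curvature operators and their covariant derivatives span $\mathfrak{so}(m+n)$ --- is exactly the alternative route the paper itself points to (via Ozeki's theorem) in the proof of Theorem~\ref{t3.2}, and your linear-algebra plan is sound: the span claim is true, the three cyclic combinations of $R(X_a,\Omega_{b,c})$ do isolate the mixed bivectors, the analogous cyclic trick on $\nabla^g_{X_t}R(X_l,\Omega_{h,k})$ isolates the $\Omega\wedge\Omega$ bivectors, and the low-rank cases $m=2,3$ go through (for $m=2$ the $\Omega$-sum in \eqref{e3.5} vanishes, so $R(X_1,X_2)=\tfrac34 X_1\wedge X_2$ directly). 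However, there is a genuine gap in the step where you generate the covariant-derivative verticals. The bracket of a rolling lift against a curvature vertical is, by \eqref{e3.13} and \eqref{e3.23},
\[
[\lr(Z),\nu(AR^{\nabla}(X,Y))]\onq=-\lns\big(AR^{\nabla}(X,Y)Z\big)\onq+\nu\big(A(\nabla_ZR^{\nabla})(X,Y)\big)\onq+\nu\big(AR^{\nabla}(\nabla_ZX,Y)+AR^{\nabla}(X,\nabla_ZY)\big)\onq,
\]
and the first term is a pure $\hat{M}$-translation $\lns(0,\hat v)$ which is \emph{not} ``already tangent to the orbit'' at that stage; you cannot simply project onto the vertical component of a bracket, since only the full bracket is guaranteed to lie in $T_q\odr(q_0)$. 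This is precisely where the paper's proof does its real work: it first obtains the verticals $\nu(A(X_h\wedge\Omega_{k,i}+X_k\wedge\Omega_{i,h}))\onq$ from brackets of two rolling lifts, then uses them to peel the vertical part off $[\lr(X_i),\nu((\cdot)(X_h\wedge X_k))]\onq$ and thereby establish that the translations $\lns(AX_k)\onq$ are tangent, and only then can further verticals be isolated.

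To repair your argument you have two options. Either carry out the bookkeeping the paper does (track the $\lns(A\,\cdot)$ terms explicitly and show they are tangent before discarding them), or invoke Ozeki's theorem as a black box to get that the holonomy algebra of $(\G,\nabla^g)$ contains all $R$ and $\nabla^kR$ evaluations, and then transfer this to the vertical tangent space of $\odr(q_0)$ via the identification of the fiber $\odr(q_0)\cap\pi_{Q,M}^{-1}(x)$ with the affine holonomy group in Proposition~\ref{p3.3}. Either way, the remaining linear algebra should be written out rather than announced, since the disentangling of the sums $\sum_j\Omega_{h,j}\wedge\Omega_{k,j}$ and the two-term sums $\Omega_{t,h}\wedge\Omega_{k,l}+\Omega_{t,k}\wedge\Omega_{l,h}$ is the quantitative heart of the statement; as it stands your text identifies the obstacle but does not remove it.
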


\begin{proof}
Fix some $\qz \in Q$, for any $h,k,i,j \in \{1,\dots,m\}$ such that $i \neq j$ and $k\neq h$, the first order Lie brackets on $\odr(q_0)$ are
\begin{align*}
[\lr(X_h),\lr(X_k)] \onq &= \lr(\Omega_{h,k}) \onq + \nu ( A R (X_h, X_k)) \onq\\
&=  \lr(\Omega_{h,k}) \onq + \frac{3}{4} \nu ( A (X_h \wedge X_k)) \onq + \frac{1}{4} \nu \big( A (\sum_{j=1}^{m} \Omega_{h,j}  \wedge \Omega_{k,j} ) \big) \onq,\\
[\lr(\Omega_{i,j}),\lr(\Omega_{h,k})] \onq & = \frac{1}{4} \nu (A (\delta_{ik} X_h \wedge X_j + \delta_{jk} X_i \wedge X_h + \delta_{ih} X_j \wedge X_k + \delta_{jh} X_k \wedge X_i) ) \onq,\\
[\lr(X_i),\lr(\Omega_{h,k})] \onq &= \frac{1}{4} \nu ( A (X_h \wedge \Omega_{k,i}  + X_k \wedge \Omega_{i,h} )) \onq.
\end{align*}
By taking $i=k$ in the bracket $[\lr(\Omega_{i,j}),\lr(\Omega_{h,k})] \onq$, we get that, for any $h,j \in \{1,\dots,m\}$, $\nu ( A (X_h \wedge X_j )) \onq$ is tangent to $\odr(q_0)$. In addition, from the first and the last brackets of the above Lie brackets, we obtain that $\nu \big( A (\sum_{j=1}^{m} \Omega_{h,j}  \wedge \Omega_{k,j} ) \big) \onq$ and $\nu ( A (X_h \wedge \Omega_{k,i}  + X_k \wedge \Omega_{i,h} )) \onq$ are tangent to $\odr(q_0)$, for any $h,k,i,j \in \{1,\dots,m\}$. Thus, we can compute the next bracket in $T_q \odr (q_0)$, for $q \in \odr(q_0)$,

\begin{eqnarray*}
\leftline{
$
\begin{array}{rcl}
[\lr(X_i),\nu ((\cdot) ( X_h \wedge X_k))] \onq & = & \delta_{ki} \lns (A X_h) \onq - \delta_{hi} \lns (A X_k) \onq \\
\\
& - & \frac{1}{2} \nu ( A (X_h \wedge \Omega_{k,i}  + X_k \wedge \Omega_{i,h} )) \onq.
\end{array}
$
}
\end{eqnarray*}
Using $[\lr(X_i),\lr(\Omega_{h,k})] \onq$ and then putting $i=h$ in the last Lie bracket, we obtain that $\lns (A X_k) \onq$ is tangent to $\odr(q_0)$, for all $k \in \{1,\dots,m\}$. In addition, we have
\begin{eqnarray*}
\leftline{
$
\begin{array}{cl}
& [\lr(\Omega_{t,s}), \nu ( (\cdot) (X_h \wedge \Omega_{k,i}  + X_k \wedge \Omega_{i,h} ))] \onq\\
\\
= & (\delta_{tk} \delta_{ls} - \delta_{tl}\delta_{sk}) \lns (A X_h) \onq + (\delta_{ht}\delta_{ls} - \delta_{tl}\delta_{hs}) \lns (A X_k) \onq \\
\\
+ & \frac{1}{2} \big( \delta_{sh} \nu (A ( X_t \wedge \Omega_{k,l})) \onq + \delta_{sk} \nu (A ( X_t \wedge \Omega_{l,h})) \onq \\
\\
& \quad - \delta_{th} \nu (A ( X_s \wedge \Omega_{k,l})) \onq + \delta_{tk} \nu (A ( X_s \wedge \Omega_{l,h} )) \onq \big).
\end{array}
$
}
\end{eqnarray*}

Since $\lns (A X_k) \onq$ is tangent to $\odr(q_0)$, for all $k \in \{1,\dots,m\}$, then $\nu (A (X_t \wedge \Omega_{h,k})) \onq$ is also tangent for any distinct integers $h,k,t \in \{1,\dots,m\}$. The last Lie bracket to compute is
\begin{align*}
[\lns(X_t),\nu  \big(A ( X_l \wedge \Omega_{h,k} ) \big)] \onq = \frac{1}{2} \big( & \delta_{tk}\nu (A ( X_l \wedge X_h))\onq -  \delta_{th}\nu (A ( X_l \wedge X_k ))\onq\\
& + \nu (A ( \Omega_{t,l} \wedge \Omega_{h,k}))\onq  \big).
\end{align*}
Therefore, for every $h, k, t, l \in \{ 1,\dots,m\},$ $\nu (A ( \Omega_{t,l} \wedge \Omega_{h,k}))\onq $ is tangent to $\odr(q_0).$ Hence, for all $q \in \odr(q_0)$ the following vector fields
\begin{eqnarray*}
\nu ( A (X_h \wedge X_k)) \onq,\;\; \nu ( A ( X_t  \wedge \Omega_{h,k} )) \onq,\;\; \nu ( A ( \Omega_{t,l}  \wedge \Omega_{h,k} )) \onq,
 \end{eqnarray*}
are tangent to $\odr(q_0)$. This completes the proof because we have that $\nu (AB) \onq \in T_q \odr(q_0)$ if and only if $B \in \mathfrak{so} (T_x \G)$ for $\q \in \odr(q_0)$.
\end{proof}

We return to prove the main theorem in the beginning of the current subsection.

\begin{proof}[Proof of Theorem \ref{t3.2}].
As the vertical bundle of $Q$ is included in the tangent space of $\odr (q_0)$ by Proposition \ref{p3.7}, then the rolling problem $(\Sigma)_R$ is completely controllable (see Corollary 5.21 in \cite{ChitourKokkonen}). According to Theorem 4.3 in \cite{ChitourKokkonen2}, the holonomy group of $\G$ is equal to $\SO(m+n)$. Note that one could have used as well the main result in \cite{Ozeki} stating that the tangent space of the holonomy group at every point $x\in M$ contains the evaluations at $x$ of the curvature tensor and its covariant derivatives at any order.
\end{proof}

%%%%%%%%%%%%%%%%%%%%%%%%%%%%%%%%%%%%%%%%%%%%%%%%%%%%%%%%%%%%%%%%%%%%%%
\subsection{Horizontal Holonomy Group of $(\G,g)$}
%%%%%%%%%%%%%%%%%%%%%%%%%%%%%%%%%%%%%%%%%%%%%%%%%%%%%%%%%%%%%%%%%%%%%%

We define the distribution $\Delta : = \hbox{span}\{X_1 , \dots , X_m\}$ on $\G$ and $\qz \in Q$. Note that is of cnstant rank $m$. We will first compute a basis of $T_q \oDr (q_0)$ for any $q \in \oDr (q_0)$ and then determine the holonomy group $\mathcal{H}^{\;\nabla}_{\Dr}$ of rolling of $(\G,g)$ against $(\R^N, s_N)$, where $s_N$ is the Euclidean metric on $\R^N$.
%%%%%%%%%%%%%%%%%%%%%%%%%%%%%%%%%%%%%%%%%%%%%%%%%%%%%%%%%%%%%%%%%%%%%%
\subsubsection{The Tangent Space of $\Oh_{\Delta_R}({q_0})$}
%%%%%%%%%%%%%%%%%%%%%%%%%%%%%%%%%%%%%%%%%%%%%%%%%%%%%%%%%%%%%%%%%%%%%%

\begin{proposition}\label{p3.5}
For any $q_0\in Q$, the tangent space of $\Oh_{\Delta_R}({q_0})$ is generated by the following linearly independent vector fields:
\begin{align}\label{e3.22}
\lns (X_h) \onq, \; & \lns (A X_h) \onq, \; \lns(A \Omega_{h,k}) \onq, \; \lns (\Omega_{h,k}) \onq + \frac{1}{2} \nu ( A (X_h \wedge X_k)) \onq,\\
\nonumber & \nu \big( A (\sum_{j=1}^{m} X_j  \wedge \Omega_{h,j} ) \big) \onq, \; \nu \big( A (X_h \wedge X_k + \sum_{j=1}^{m} \Omega_{h,j}  \wedge \Omega_{k,j} ) \big) \onq.
\end{align}
\end{proposition}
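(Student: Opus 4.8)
Throughout I identify the no-spinning lifts of the partial velocities by writing $\lns(X)$ for $\lns\big((X,0)\big)$ and $\lns(\hX)$ for $\lns\big((0,\hX)\big)$, so that the rolling lift splits as $\lr(X)\onq=\lns(X)\onq+\lns(AX)\onq$. The plan is to prove that $T_q\oDr(q_0)=\mathcal{D}_q$, where $\mathcal{D}$ is the span of the six families in \eqref{e3.22}, by a two-sided argument. Since $\Dr$ is generated by the rolling lifts $\lr(X_h)$, $h=1,\dots,m$, the lower bound will come from showing that each field in \eqref{e3.22} is an iterated Lie bracket (or a linear combination of such) of the $\lr(X_h)$, hence tangent to $\oDr(q_0)$ via the Orbit Theorem inclusion $Lie_q(\Dr)\subset T_q\oDr(q_0)$. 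The upper bound will come from showing that $\mathcal{D}$ is involutive of constant rank $3(m+n)$: then, as $\Dr\subset\mathcal{D}$, every $\Dr$-admissible curve issued from $q_0$ stays in the leaf of $\mathcal{D}$ through $q_0$, so $T_q\oDr(q_0)\subset\mathcal{D}_q$. Throughout I would use the bracket identities \eqref{e3.12}, \eqref{e3.13}, \eqref{e3.14} and \eqref{e3.33}, the vanishing of the torsion $T^{\nabla^g}=0$, and the explicit curvature and covariant derivatives from Lemmas~\ref{l3.2} and \ref{l3.3}.

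The bracket cascade starts, exactly as in Proposition~\ref{p3.7} but now only from $\lr(X_1),\dots,\lr(X_m)$, with
\[
[\lr(X_h),\lr(X_k)]\onq=\lr(\Omega_{h,k})\onq+\nu(AR(X_h,X_k))\onq,
\]
using \eqref{e3.33} and $T^{\nabla^g}=0$; by \eqref{e3.5} this is a combination of the third, fourth and sixth families of \eqref{e3.22}, namely $\lns(A\Omega_{h,k})\onq$, $\lns(\Omega_{h,k})\onq+\tfrac12\nu(A(X_h\wedge X_k))\onq$ and $\tfrac14\nu(A(X_h\wedge X_k+\sum_j\Omega_{h,j}\wedge\Omega_{k,j}))\onq$. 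To separate these I would bracket the fields already obtained against the $\lr(X_i)$'s (through \eqref{e3.33} and \eqref{e3.12}) and against the verticals (through \eqref{e3.13} and \eqref{e3.14}), reading off the curvature terms $\nu(AR(\cdot,\cdot))$ from Lemma~\ref{l3.2} and the terms of the form $\lns\big(A(\nabla R)(\cdot,\cdot,\cdot)\big)$ from Lemma~\ref{l3.3}. This should successively isolate the two vertical families $\nu\big(A\sum_j X_j\wedge\Omega_{h,j}\big)\onq$ and $\nu\big(A(X_h\wedge X_k+\sum_j\Omega_{h,j}\wedge\Omega_{k,j})\big)\onq$, then the no-spinning lifts $\lns(AX_h)\onq$ and $\lns(A\Omega_{h,k})\onq$, and finally $\lns(X_h)\onq$ and $\lns(\Omega_{h,k})\onq+\tfrac12\nu(A(X_h\wedge X_k))\onq$, giving $\mathcal{D}_q\subset Lie_q(\Dr)\subset T_q\oDr(q_0)$.

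For the reverse inclusion it remains to check that the bracket of any two of the six families lies again in $\mathcal{D}$. With $T^{\nabla^g}=0$ this is once more a direct application of \eqref{e3.12}, \eqref{e3.13}, \eqref{e3.14} and \eqref{e3.33}, all curvature and covariant-derivative contributions being supplied by Lemmas~\ref{l3.2} and \ref{l3.3}. The algebraic core of this step is that the two vertical families span a Lie subalgebra of $A\,\mathfrak{so}(T_x\G)$: by \eqref{e3.14} one has $[\nu(AB),\nu(AB')]\onq=\nu(A[B,B'])\onq$, so one must verify that the span of the bivectors $\sum_j X_j\wedge\Omega_{h,j}$ and $X_h\wedge X_k+\sum_j\Omega_{h,j}\wedge\Omega_{k,j}$ is closed under the commutator of $\mathfrak{so}(T_x\G)$. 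Granting involutivity and the constant rank, the leaf argument above yields $T_q\oDr(q_0)\subset\mathcal{D}_q$, hence equality (alternatively, analyticity of $(\G,g)$ gives $T_q\oDr(q_0)=Lie_q(\Dr)$ via Nagano's theorem, which would bypass the explicit involutivity check).

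Linear independence of the six families is then read off the splitting $T_qQ=\lns(T_x\G)\oplus\lns(T_{\hx}\R^N)\oplus\nu\big(A\,\mathfrak{so}(T_x\G)\big)$: the first and fourth families carry the only nonzero $\lns(T_x\G)$-components (along $X_h$ and along $\Omega_{h,k}$), the second and third exhaust the $\lns(T_{\hx}\R^N)$-component (along $AX_h$ and $A\Omega_{h,k}$), while in the vertical block the fifth and sixth families are independent because $\sum_j X_j\wedge\Omega_{h,j}$ (of type $X\wedge\Omega$) and $X_h\wedge X_k+\sum_j\Omega_{h,j}\wedge\Omega_{k,j}$ (of types $X\wedge X$ and $\Omega\wedge\Omega$) involve disjoint families of basis bivectors. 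The main obstacle I anticipate is the bookkeeping of this cascade together with the evaluation of $\nabla R$ from Lemma~\ref{l3.3}; the decisive and most delicate point is to confirm that the only vertical directions produced are the two curvature-type combinations above and that the individual $\nu(A(X_h\wedge X_k))\onq$ can \emph{not} be split off — in sharp contrast with the full orbit of Proposition~\ref{p3.7}, where the generators $\lr(\Omega_{h,k})$ are available. This is exactly the mechanism that forces $H^{\;\nabla}_\Delta$ to be strictly smaller than $H^{\nabla}=\SO(m+n)$.
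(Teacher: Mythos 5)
Your plan coincides with the paper's proof: the paper invokes the analytic Nagano--Sussmann orbit theorem to get $T_q \Oh_{\Delta_R}(q_0)=Lie_q(\Delta_R)$, computes the iterated brackets $[\lr(X_{\alpha_r}),\dots,[\lr(X_{\alpha_2}),\lr(X_{\alpha_1})]\dots]$ up to depth six, isolates the six families of \eqref{e3.22} by taking suitable linear combinations with special index choices, verifies involutivity of their span by first-order brackets, and checks linear independence exactly as you describe. The only cosmetic difference is that the paper separates the generators purely via linear combinations of the horizontal iterated brackets (setting $p=s=t=l=k$, etc.) rather than by bracketing against the vertical fields, and you correctly identify the decisive point that $\nu(A(X_h\wedge X_k))$ cannot be split off from $\lns(\Omega_{h,k})$.
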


\begin{proof}
Recall that all the data of the problem are analytic. Then, by the analytic version of the orbit theorem of Nagano-Sussmann (cf. \cite{AgrachevSachkov1}), the orbit $\Oh_{\Delta_R}({q_0})$ is an immersed analytic submanifold in the state space $Q$ and $T_q \Oh_{\Delta_R}({q_0}) = Lie_q(\Delta_R)$. Therefore, we are left to determine vector fields spanning $Lie_q(\Delta_R)$, i.e., to compute enough iterated Lie brackets of $\Delta$.

For any $h,k,l,s,t,p \in \{1,\dots,m\}$, we have
\begin{eqnarray}\label{e3.27}
\leftline{
$
\begin{array}{lcl}
& & [\lr(X_h),\lr(X_k)] \onq = \lr(\Omega_{h,k}) \onq + \nu ( A R (X_h, X_k)) \onq\\
\\
& = & \lr(\Omega_{h,k}) \onq + \frac{3}{4} \nu ( A (X_h \wedge X_k)) \onq + \frac{1}{4} \nu \big( A (\sum_{j=1}^{m} \Omega_{h,j}  \wedge \Omega_{k,j} ) \big) \onq.
\end{array}
$
}
\end{eqnarray}
\newline
\begin{eqnarray}\label{e3.28}
\leftline{
$
\begin{array}{lcl}
& & [\lr (X_l),[\lr(X_h),\lr(X_k)]] \onq\\
\\
& = & \delta_{kl} \; \big( \frac{3}{4} \lns(A X_h) \onq + \frac{1}{8} \nu ( A \sum_{j=1}^{m} X_j  \wedge \Omega_{h,j} ) ) \onq \big)\\
\\
& - & \delta_{hl} \; \big( \frac{3}{4} \lns(A X_k) \onq + \frac{1}{8} \nu ( A (\sum_{j=1}^{m} X_j  \wedge \Omega_{k,j} ) ) \onq \big).
\end{array}
$
}
\end{eqnarray}
\newline
\begin{eqnarray}\label{e3.29}
\leftline{
$
\begin{array}{lcl}
& & [ \lr (X_t) , [\lr (X_l),[\lr(X_h),\lr(X_k)]]] \onq\\
\\
& = & \delta_{kl} \; \big( \frac{1}{2} \lns(A \Omega_{t,h}) \onq +  \frac{1}{16} \nu ( A (X_t \wedge X_h)) \onq + \frac{1}{16} \nu ( A (\sum_{j=1}^{m} \Omega_{t,j}  \wedge \Omega_{h,j} ) ) \onq \big)\\
\\
& - & \delta_{hl} \; \big( \frac{1}{2} \lns(A \Omega_{t,k}) \onq +  \frac{1}{16} \nu ( A (X_t \wedge X_k)) \onq + \frac{1}{16} \nu ( A (\sum_{j=1}^{m} \Omega_{t,j}  \wedge \Omega_{k,j} ) ) \onq \big).
\end{array}
$
}
\end{eqnarray}
\newline
\begin{eqnarray}\label{e3.30}
\leftline{
$
\begin{array}{lcl}
& & [ \lr (X_s ) , [\lr (X_t) , [\lr (X_l),[\lr(X_h),\lr(X_k)]]]] \onq\\
\\
& = & \delta_{kl} \; \bigg( \delta_{hs} \big( \frac{3}{8} \lns(A X_t) \onq +  \frac{1}{32}  \nu ( A (\sum_{j=1}^{m} X_j  \wedge \Omega_{t,j} ) ) \big)\\
& & \quad \quad  - \delta_{ts} \big( \frac{3}{8} \lns(A X_h) \onq +  \frac{1}{32}  \nu ( A (\sum_{j=1}^{m} X_j  \wedge \Omega_{h,j} ) ) \big) \bigg)\\
\\
& - & \delta_{hl} \; \bigg( \delta_{ks} \big( \frac{3}{8} \lns(A X_t) \onq +  \frac{1}{32}  \nu ( A (\sum_{j=1}^{m} X_j  \wedge \Omega_{t,j} ) ) \big)\\
& & \quad \quad - \delta_{ts} \big( \frac{3}{8} \lns(A X_k) \onq +  \frac{1}{32}  \nu ( A (\sum_{j=1}^{m} X_j  \wedge \Omega_{k,j} ) ) \big) \bigg).
\end{array}
$
}
\end{eqnarray}
\newline
\begin{align}\label{e3.31}
\leftline{
$
\begin{array}{lcl}
& & [ \lr (X_p) , [\lr (X_s) , [\lr (X_t) , [\lr (X_l),[\lr(X_h),\lr(X_k)]]]]] \onq\\
\\
& = & \delta_{kl} \; \bigg( \delta_{hs} \big( \frac{7}{32} \lns(A \Omega_{p,t}) \onq +  \frac{1}{64} \nu ( A (X_p \wedge X_t)) \onq + \frac{1}{64} \nu ( A (\sum_{j=1}^{m} \Omega_{p,j}  \wedge \Omega_{t,j} ) ) \onq \big)\\
& & \quad \quad - \delta_{ts} \big( \frac{7}{32} \lns(A \Omega_{p,h}) \onq +  \frac{1}{64} \nu ( A (X_p \wedge X_h)) \onq + \frac{1}{64} \nu ( A (\sum_{j=1}^{m} \Omega_{p,j}  \wedge \Omega_{h,j} ) ) \onq \big) \bigg)\\
\\
& - & \delta_{hl} \; \bigg( \delta_{ks} \big( \frac{7}{32} \lns(A \Omega_{p,t}) \onq +  \frac{1}{64} \nu ( A (X_p \wedge X_t)) \onq + \frac{1}{64} \nu ( A (\sum_{j=1}^{m} \Omega_{p,j}  \wedge \Omega_{t,j} ) ) \onq  \big)\\
& & \quad \quad - \delta_{ts} \big( \frac{7}{32} \lns(A \Omega_{p,k}) \onq +  \frac{1}{64} \nu ( A (X_p \wedge X_k)) \onq + \frac{1}{64} \nu ( A (\sum_{j=1}^{m} \Omega_{p,j}  \wedge \Omega_{k,j} ) ) \onq \big) \bigg).
\end{array}
$
}
\end{align}
First we should remark that, by iteration, the commutators
\[
[\lr(X_{\alpha_r}), \dots [ \lr(X_{\alpha_2}) , \lr(X_{\alpha_1})]\dots],
\]
where $\alpha_i \in \{1,\dots,m\}$ and $r\geq 3$ are written either as the vectors in \eqref{e3.28} and \eqref{e3.30}, or as those in \eqref{e3.29} and \eqref{e3.31}. Therefore, one only has to prove that \eqref{e3.27}, \eqref{e3.28}, \eqref{e3.29}, \eqref{e3.30} and \eqref{e3.31} are linear combinations of the vector fields in \eqref{e3.22} and then show that the Lie algebra generated by these vector fields is involutive.
Indeed, fix some $h, k \in \{1,\dots,m\}$ such that $k \neq h$ and take $p=s=t=l=k$ in the above Lie brackets. Calculate $\frac{1}{4} \eqref{e3.28} + \eqref{e3.30}$ and $\frac{1}{2} \eqref{e3.28} + \eqref{e3.30}$, we get that
$$
\lns (A X_h) \onq, \quad \nu ( A (\sum_{j=1}^{m} X_j  \wedge \Omega_{h,j} ) )\onq
$$
are vectors in $Lie(\Delta_R)$. On the other hand, $\frac{1}{4} \eqref{e3.29} + \eqref{e3.31}$ and $\frac{7}{16} \eqref{e3.29} + \eqref{e3.31}$ imply that
$$
\lns(A \Omega_{h,k}) \onq, \quad \nu ( A (X_h \wedge X_k + \sum_{j=1}^{m} \Omega_{h,j}  \wedge \Omega_{k,j} ) ) \onq
$$
belong also to $Lie(\Delta_R)$. The last two vectors with $\eqref{e3.27}$ give us another vector in $Lie(\Delta_R)$ which is
$$
\lns (\Omega_{h,k}) \onq + \frac{1}{2} \nu ( A (X_h \wedge X_k)) \onq.
$$
To show that the vector fields given in Eq.~\eqref{e3.22} form a basis for $Lie(\Delta_R)$, it remains to compute their first order Lie brackets to see that they define an involutive Lie algebra.

We have,
\begin{equation}\label{lie1}
\leftline{
$
\begin{array}{ll}
& [ \nu ( (\cdot) (\sum_{j=1}^{m} X_j  \wedge \Omega_{h,j} ) ),\nu ( (\cdot) (\sum_{j=1}^{m} X_j \wedge \Omega_{k,j} ) )] \onq \\
\\
= & \nu ( A (X_h \wedge X_k + \sum_{j=1}^{m} \Omega_{h,j}  \wedge \Omega_{k,j} )) \onq,\\
\end{array}
$
}
\end{equation}
\newline
\begin{equation}\label{lie2}
\leftline{
$
\begin{array}{ll}
& [ \nu ( (\cdot) (\sum_{j=1}^{m} X_j  \wedge \Omega_{l,j} ) ), \nu ( (\cdot) (X_h \wedge X_k + \sum_{j=1}^{m} \Omega_{h,j}  \wedge \Omega_{k,j} )) ] \onq\\
\\
= & \delta_{kl} \nu ( A (\sum_{j=1}^{m} X_j  \wedge \Omega_{h,j} ) ) \onq - \delta_{hl} \nu ( A (\sum_{j=1}^{m} X_j  \wedge \Omega_{k,j} ) ) \onq,
\end{array}
$
}
\end{equation}
and,
\begin{equation}\label{lie3}
\leftline{
$
\begin{array}{ll}
& [ \nu ( (\cdot) (X_l \wedge X_t+ \sum_{j=1}^{m} \Omega_{l,j}  \wedge \Omega_{t,j} )) , \nu ( (\cdot) (X_h \wedge X_k + \sum_{j=1}^{m} \Omega_{h,j}  \wedge \Omega_{k,j} ))] \onq\\
\\
=  & \delta_{kl} \nu ( A (X_h \wedge X_t + \sum_{j=1}^{m} \Omega_{h,j}  \wedge \Omega_{t,j} )) \onq +  \delta_{hl} \nu ( A (X_t \wedge X_k + \sum_{j=1}^{m} \Omega_{t,j}  \wedge \Omega_{k,j} )) \onq\\
\\
+ &  \delta_{tk} \nu ( A (X_l \wedge X_h + \sum_{j=1}^{m} \Omega_{l,j}  \wedge \Omega_{h,j} )) \onq + \delta_{ht} \nu ( A (X_k \wedge X_l + \sum_{j=1}^{m} \Omega_{k,j}  \wedge \Omega_{l,j} )) \onq.
\end{array}
$
}
\end{equation}
Moreover, the Lie brackets between $\lns (\Omega_{h,k}) + \frac{1}{2} \nu ( A (X_h \wedge X_k))$ and the remaining vectors of \eqref{e3.22} are
\begin{equation*}
\begin{split}
&[\lns (X_l) , \lns (\Omega_{h,k}) + \frac{1}{2} \nu ( (\cdot) (X_h \wedge X_k)) ] \onq = 0,\\
&[\lns ((\cdot ) X_l) , \lns (\Omega_{h,k}) + \frac{1}{2} \nu ( (\cdot) (X_h \wedge X_k)) ] \onq = 0,\\
&[\lr ((\cdot ) \Omega_{i,j}) , \lns (\Omega_{h,k}) + \frac{1}{2} \nu ( (\cdot) (X_h \wedge X_k)) ] \onq = 0,\\
&[\lns ( (\cdot ) \Omega_{l,t}) , \lns (\Omega_{h,k}) + \frac{1}{2} \nu ( (\cdot) (X_h \wedge X_k)) ] \onq = 0,\\
&[\lns (\Omega_{h,k}) + \frac{1}{2} \nu ( (\cdot) (X_h \wedge X_k)) , \lns (\Omega_{l,t}) + \frac{1}{2} \nu ( (\cdot) (X_l \wedge X_t)) ] \onq = 0,\\
&[\lns (\Omega_{h,k}) + \frac{1}{2} \nu ( (\cdot) (X_h \wedge X_k)), \nu ( (\cdot) (X_l \wedge X_t + \sum_{j=1}^{m} \Omega_{l,j}  \wedge \Omega_{t,j} ))] \onq = 0,\\
&[\lns (\Omega_{h,k}) + \frac{1}{2} \nu ( (\cdot) (X_h \wedge X_k)), \nu ( (\cdot) (\sum_{j=1}^{m} X_j  \wedge \Omega_{l,j} ) )] \onq = 0.
\end{split}
\end{equation*}
Then, the vector fields
\begin{align*}
\lns (X_h) \onq, \; & \lns (A X_h) \onq, \; \lns(A \Omega_{h,k}) \onq, \; \lns (\Omega_{h,k}) \onq + \frac{1}{2} \nu ( A (X_h \wedge X_k)) \onq,\\
& \nu \big( A (\sum_{j=1}^{m} X_j  \wedge \Omega_{h,j} ) \big) \onq, \; \nu \big( A (X_h \wedge X_k + \sum_{j=1}^{m} \Omega_{h,j}  \wedge \Omega_{k,j} ) \big) \onq,
\end{align*}
form an involutive distribution and any vector fields in $Lie(\Delta_R)$ is a linear combination of them. It remains to check that they are linearly independent. It is clearly enough to do that for the family of vector fields $\nu \big( A (\sum_{j=1}^{m} X_{j}  \wedge \Omega_{h,j} ) \big) \onq$. Suppose there exists $(\alpha_h)_{1\leq h\leq m}$, such that $\sum_{h=1}^{m}{\alpha_h}\sum_{j=1}^{m} X_j \wedge \Omega_{h,j} =0 $. Then $\sum_{j=1}^{m} X_j \wedge (\sum_{h=1, h\neq j}^{m} {\alpha_h} \Omega_{h,j} )=0$ $\forall j \in\{1,\dots,m\}$. Hence, $\sum_{h=1, h\neq j}^{m} {\alpha_h} \Omega_{h,j} =0$ for every $j,h \in\{1,\dots,m\}$, so ${\alpha_h}=0$ for every $h \in\{1,\dots,m\}$. Therefore,
\begin{align*}
\lns (X_h) \onq, \; & \lns (A X_h) \onq, \; \lns(A \Omega_{h,k}) \onq, \; \lns (\Omega_{h,k}) \onq + \frac{1}{2} \nu ( A (X_h \wedge X_k)) \onq,\\
& \nu \big( A (\sum_{j=1}^{m} X_j  \wedge \Omega_{h,j} ) \big) \onq, \; \nu \big( A (X_h \wedge X_k + \sum_{j=1}^{m} \Omega_{h,j}  \wedge \Omega_{k,j} ) \big) \onq,
\end{align*}
is a global basis of $Lie_q(\Delta_R)$ and hence the dimension of $Lie_q(\Delta_R)$ is constant and equal to $3 N$. We deduce that $\dim \Oh_{\Delta_R} (q_0) = 3N$ and the tangent space of $\Oh_{\Delta_R}({q_0})$ is generated by the vectors in \eqref{e3.22}.
\end{proof}

\begin{remark}\label{r3.2}
According to this proposition, $\lns (A X_h) \onq$ and $\lns(A \Omega_{h,k}) \onq$ are tangent to $\Oh_{\Delta_R} (q_0)$. This implies that $\pi_{Q,\R^N}(\Oh_{\Delta_R}) = \R^{m+n}$ which means that all the translations along $\R^{m+n}$ are included in the tangent space of the orbit. Furthermore, the families of vector fields $\nu \big( A (\sum_{j=1}^{m} X_j  \wedge \Omega_{h,j} ) \big) \onq $ and $\nu \big( A (X_h \wedge X_k + \sum_{j=1}^{m} \Omega_{h,j}  \wedge \Omega_{k,j} ) \big) \onq $ form an involutive vertical distribution.
\end{remark}

%%%%%%%%%%%%%%%%%%%%%%%%%%%%%%%%%%%%%%%%%%%%%%%%%%%%%%%%%%
\subsubsection{Determination of $\mathcal{H}^{\;\nabla}_{\Dr |q}$}
%%%%%%%%%%%%%%%%%%%%%%%%%%%%%%%%%%%%%%%%%%%%%%%%%%%%%%%%%%

The main result of the subsection is given next.

\begin{proposition}\label{p3.4}
\begin{itemize}
\item[(i)]
The affine $\Delta$-horizontal holonomy group $\mathcal{H}^{\;\nabla}_{\Dr}$ is a Lie subgroup of $\SE(m+n)$ of dimension  $2(m+n)$.
\item[(ii)] The $\Delta$-horizontal holonomy group $H_\Delta^{\; \nabla}$  is a Lie subgroup of $\SO(m+n)$ of dimension $m+n$.
Moreover, the connected component of the identity $(H_\Delta^{\; \nabla})_0$ of
$H_\Delta^{\; \nabla}$ is compact.
\end{itemize}
\end{proposition}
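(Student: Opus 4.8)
The plan is to harvest everything from the explicit description of $T_q\oDr(q_0)$ in Proposition~\ref{p3.5} and feed it into the general structure theory of Subsection~\ref{generalrolling}. For part~(i), Proposition~\ref{p3.5} fixes $\dim\oDr(q_0)=3N$, where $N=m+n$. Since $\Delta=\mathrm{span}\{X_1,\dots,X_m\}$ satisfies the LARC it is completely controllable, so Proposition~\ref{pr:submersion} shows that $\pi_{Q,M}$ restricts to a submersion of $\oDr(q_0)$ onto $\G$, and Corollary~\ref{cor:fiber_submanifold} makes the fiber $\pi_{Q,M}^{-1}(x_0)\cap\oDr(q_0)$ an embedded submanifold of dimension $\delta=3N-N=2N=2(m+n)$. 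This fiber is identified, through the free action $\mu$, with the affine holonomy group $\mathcal{H}^{\;\nabla}_{\Dr |q_0}$. As $\G\cong\R^{m+n}$ is oriented, the rolling of $(\G,g)$ against $(\R^N,s_N)$ takes place in a principal $\SE(m+n)$-bundle, so Proposition~\ref{p3.10} already gives that $\mathcal{H}^{\;\nabla}_{\Dr}$ is a Lie subgroup of $\SE(m+n)$; its dimension equals the fiber dimension $2(m+n)$, which settles~(i).

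For the dimension in part~(ii), I would read off the Lie algebra of $\mathcal{H}^{\;\nabla}_{\Dr |q_0}$ as the space of $\pi_{Q,M}$-vertical vectors inside the basis~\eqref{e3.22}, namely the translation lifts $\lns(AX_h)$, $\lns(A\Omega_{h,k})$ and the vertical rotations $\nu(A\sum_{j}X_j\wedge\Omega_{h,j})$, $\nu(A(X_h\wedge X_k+\sum_{j}\Omega_{h,j}\wedge\Omega_{k,j}))$. Under the splitting $\mathfrak{se}(m+n)=\R^{m+n}\oplus\mathfrak{so}(m+n)$, the first two families span the whole translation ideal $\R^{m+n}$: since $A$ is invertible and $\{X_h\}\cup\{\Omega_{h,k}\}$ is a basis of $T_{x_0}\G$, their images $\{AX_h\}\cup\{A\Omega_{h,k}\}$ form a basis of $\R^{m+n}$ (this is the content of Remark~\ref{r3.2}). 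Writing $\rho:\SE(m+n)\to\SO(m+n)$ for the linear part and applying $d\rho$ to the exact sequence $0\to\R^{m+n}\to\mathrm{Lie}(\mathcal{H}^{\;\nabla}_{\Dr})\to\mathrm{Lie}(H_\Delta^{\;\nabla})\to 0$ gives $\dim H_\Delta^{\;\nabla}=2(m+n)-(m+n)=m+n$, while its Lie subgroup property is again furnished by Proposition~\ref{p3.10}.

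The decisive step, and the one I expect to need the most care, is the compactness of $(H_\Delta^{\;\nabla})_0$; my plan is to identify its Lie algebra abstractly. The rotation generators above map under $d\rho$ isomorphically onto $\mathrm{Lie}(H_\Delta^{\;\nabla})$, so it suffices to understand the Lie algebra spanned by the endomorphisms $P_h:=\sum_{j}X_j\wedge\Omega_{h,j}$ and $Q_{h,k}:=X_h\wedge X_k+\sum_{j}\Omega_{h,j}\wedge\Omega_{k,j}$. The brackets \eqref{lie1}, \eqref{lie2} and \eqref{lie3} translate (using \eqref{e3.14}) into the relations
\[
[P_h,P_k]=Q_{h,k},\qquad [P_l,Q_{h,k}]=\delta_{kl}P_h-\delta_{hl}P_k,
\]
together with the standard $\mathfrak{so}(m)$-relations among the $Q_{h,k}$. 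This is exactly the bracket table of $\mathfrak{so}(m+1)$ under $P_h\leftrightarrow e_0\wedge e_h$, $Q_{h,k}\leftrightarrow e_h\wedge e_k$, and the dimensions agree since $\dim\mathfrak{so}(m+1)=\tfrac{m(m+1)}{2}=m+n$. Hence $\mathrm{Lie}(H_\Delta^{\;\nabla})\cong\mathfrak{so}(m+1)$, a semisimple Lie algebra of compact type because $m\geq2$.

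Finally, I would conclude compactness from this identification: a connected Lie subgroup whose Lie algebra is semisimple of compact type is compact, for instance because $(H_\Delta^{\;\nabla})_0$ is then the image of the compact simply connected group $\mathrm{Spin}(m+1)$ under a Lie group homomorphism, hence a quotient by a discrete, and therefore finite, central subgroup. Being a continuous image of a compact group inside the Hausdorff group $\SO(m+n)$, $(H_\Delta^{\;\nabla})_0$ is a compact embedded subgroup, which completes~(ii). Combined with Theorem~\ref{t3.2}, this moreover yields the strict inclusion $H_\Delta^{\;\nabla}\subsetneq H^{\nabla}=\SO(m+n)$ as soon as $m+n>3$, i.e. for $m\geq3$.
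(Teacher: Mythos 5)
Your proposal is correct and reaches every claim of the proposition; it follows the paper's overall architecture (Proposition~\ref{p3.10} for the Lie-subgroup property, the basis of $T_q\oDr(q_0)$ from Proposition~\ref{p3.5} as the source of all dimension counts), but it departs from the paper at two points, both legitimately. For the dimension $2(m+n)$ in (i), you compute $\dim\oDr(q_0)-\dim\G=3N-N$ via Proposition~\ref{pr:submersion} and Corollary~\ref{cor:fiber_submanifold}, whereas the paper extracts the vertical part \eqref{goodH} of the basis \eqref{e3.22} and counts its $2(m+n)$ elements directly; the two computations are equivalent precisely because of the submersivity of $\pi_{Q,M}$, and your route makes that dependence explicit. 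The more substantial difference is the compactness step. The paper shows that $L=\mathrm{Lie}(H^{\;\nabla}_{\Delta})$ is a compact Lie algebra (as a subalgebra of $\mathfrak{so}(m+n)$) with trivial center (Subsection~\ref{semi0}), hence compact semisimple, and then invokes Weyl's theorem. You instead read off from the relations \eqref{liealL} an explicit isomorphism $L\cong\mathfrak{so}(m+1)$ via $A_h\leftrightarrow e_0\wedge e_h$, $B_{h,k}\leftrightarrow e_h\wedge e_k$ (up to sign); this is correct --- the dimensions $m+n=m(m+1)/2$ agree and the bracket tables match --- and you then realize $(H^{\;\nabla}_{\Delta})_0$ as the continuous image of the compact group $\mathrm{Spin}(m+1)$ under the homomorphism integrating $L\hookrightarrow\mathfrak{so}(m+n)$. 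Your version buys a sharper structural conclusion (the horizontal holonomy group is identified up to covering with $\SO(m+1)$ inside $\SO(m+n)$) at the cost of verifying the bracket table against that of $\mathfrak{so}(m+1)$; the paper's version avoids naming the algebra but requires the separate center computation carried out in the appendix. Both arguments are complete.
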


\begin{proof} 
As an immediate adaptation of Proposition~\ref{p3.10} to the case where one is dealing with principal $\SE(n)$-bundles, one gets that the affine holonomy group $\mathcal{H}^{\;\nabla}_{\Dr}$ is a Lie subgroup of $\SE(m+n)$. Notice then that, if $\Pi:\SE(m+n)\to \SO(m+n)$ is the projection onto the $\SO(m+n)$ factor of $\SE(m+n)$, one has, by definition 
$H^\nabla = \Pi(\mathcal{H}^\nabla)$ and 
$H^{\;\nabla}_{\Delta} = \Pi(\mathcal{H}^{\;\nabla}_{\Delta_R})$. This shows that the $\Delta$-horizontal holonomy group $H_\Delta^{\; \nabla}$ is a Lie subgroup of $\SO(m+n)$.

We next prove that for every $q'\in Q$,
a basis of $\mathrm{Lie}(\mathcal{H}^{\;\nabla}_{\Dr |q'})$ the Lie-algebra of $\mathcal{H}^{\;\nabla}_{\Dr |q'}$ is given by the evaluation at $q'$ of
the vector fields whose values at $q=(x,\hx,A)\in Q$ are
\begin{align}\label{goodH}
\lns (A X_h) \onq, \lns(A \Omega_{h,k}) \onq, \nu \big( A (\sum_{j=1}^{m} X_j  \wedge \Omega_{h,j} ) \big) \onq,\nu \big( A (X_h \wedge X_k + \sum_{j=1}^{m} \Omega_{h,j}  \wedge \Omega_{k,j} ) \big) \onq,
\end{align}
and hence this Lie algebra has dimension $2(m+n)=m(m+1)$. To see that, consider some element $V\in \mathrm{Lie}(\mathcal{H}^{\;\nabla}_{\Dr |q'})$ as a linear subspace of $T_{q'} \Oh_{\Delta_R}({q_0})$. Then $V$ is a linear combination of the vector fields described in Eq.~\eqref{e3.22} evaluated at $q'$ and $V$
projects to a zero-vector in $TM$. By an obvious computation, one deduces that 
 $V$ is a linear combination of the vector fields given in Eq.~\ref{goodH}. Conversely, it is clear that 
 the vector fields given in Eq.~\eqref{goodH} generate a distribution whose integral manifolds lie in $\oDr (q_0) \cap \pi_{Q,M}^{-1} (x')$ where $x'=\pi_{Q,M}(q')$.
 This proves that $\mathrm{Lie}(\mathcal{H}^{\;\nabla}_{\Dr |q'})$ the Lie-algebra of $\mathcal{H}^{\;\nabla}_{\Dr |q'}$ has dimension $2(m+n)=m(m+1)$. One could also check that the distribution generated by the vector fields in Eq.~\eqref{goodH} is involutive. 
 
By a similar reasoning, a basis $\mathrm{Lie}(H_{\Delta |q'}^{\; \nabla})$ of the Lie-algebra of $H_{\Delta |q'}^{\; \nabla}$ is given by
the (evaluations at $q'$ of) vector fields
(see also \eqref{liealL} below)
\begin{align*}
\nu \big( A (\sum_{j=1}^{m} X_j  \wedge \Omega_{h,j} ) \big) \onq, \; \nu \big( A (X_h \wedge X_k + \sum_{j=1}^{m} \Omega_{h,j}  \wedge \Omega_{k,j} ) \big) \onq,
\end{align*}
and hence this Lie algebra has dimension $m+n=m(m+1)/2$.

It remains to prove the last claim in (ii).
For $1\leq h\leq m$, let $A_h\in \mathfrak{so}(m+n)$ corresponding to the vertical vector $\nu \big( A (\sum_{j=1}^{m} X_j  \wedge \Omega_{h,j} ) \big)$ and, for $(h,k)\in \I$, let $B_{h,k}\in\mathfrak{so}(m+n)$ corresponding to the vertical vector $\big( A (X_h \wedge X_k + \sum_{j=1}^{m} \Omega_{h,j}  \wedge \Omega_{k,j} ) \big)$. We extend the notations for the $B_{h,k}$ to
for any $1\leq h,k\leq m$ by setting $B_{h,k}=-B_{k,h}$.
The basis of Lie algebra $L:=\mathrm{Lie}(H^{\;\nabla}_{\Delta})$ of $H^{\;\nabla}_{\Delta}$
is given by the matrices $A_h$, $1\leq h\leq m$
and $B_{h,k}$, $(h,k)\in \I$ and thanks to Eqs.~\eqref{lie1}, \eqref{lie2} and \eqref{lie3},
one has
\begin{equation}\label{liealL}
[A_i,A_j]=B_{i,j},\ [A_i,B_{h,k}]=\delta_{ki}A_h-\delta_{hi}A_k,\ [B_{l,t},B_{h,k}]=\delta_{kl}B_{h,t}+
\delta_{hl}B_{t,k}+\delta_{tk}B_{l,h}+\delta_{ht}B_{k,l}.
\end{equation}
We prove in Subsection~\ref{semi0} that $L$ is a compact semisimple Lie algebra. 
Since the connected component of 
identity $\left(H_{\Delta}^{\; \nabla}\right)_0$  of $H_{\Delta}^{\; \nabla}$ is a connected Lie with Lie algebra $L$,
it follows from Weyl's theorem (cf. \cite[Theorem 26.1]{postnikov})
that $\left(H_{\Delta}^{\; \nabla}\right)_0$ is compact in
$\SO(m+n)$ and of dimension $m+n$.
\end{proof}

Since $m+n\leq (m+n)(m+n-1)/2$
(resp. $2(m+n)\leq (m+n+1)(m+n)/2$)
for all $m\geq 2$ and equality holds
if and only if $m=2$ we have obtain our last result.

\begin{corollary}
In the set up of Proposition \ref{p3.4},
the inclusions $\mathcal{H}^{\;\nabla}_{\Dr}\subset\SE(m+n)$
and $H_\Delta^{\; \nabla}\subset \SO(m+n)$
are strict if and only if $m\geq 3$.
\end{corollary}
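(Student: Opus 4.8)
The plan is to reduce the whole statement to a comparison of dimensions, using the dimension counts established in Proposition~\ref{p3.4} together with the connectedness of the ambient groups. Set $N:=m+n=m(m+1)/2$. Proposition~\ref{p3.4} gives $\dim H_\Delta^{\;\nabla}=N$ and $\dim \mathcal{H}^{\;\nabla}_{\Dr}=2N$, while $\dim \SO(m+n)=N(N-1)/2$ and $\dim \SE(m+n)=N(N+1)/2$. The two inequalities quoted just before the statement, $N\leq N(N-1)/2$ and $2N\leq N(N+1)/2$, are each equivalent to $N\geq 3$; since $N=m(m+1)/2\geq 3$ for every $m\geq 2$ both hold, and each becomes an equality precisely when $N=3$, that is when $m=2$.

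First I would dispose of the case $m\geq 3$. Then $N\geq 6$, so both inequalities are strict and $\dim H_\Delta^{\;\nabla}<\dim \SO(m+n)$, $\dim \mathcal{H}^{\;\nabla}_{\Dr}<\dim \SE(m+n)$. A Lie subgroup of strictly smaller dimension is necessarily a proper subset of the ambient group (its underlying immersed submanifold cannot fill a manifold of larger dimension), so both inclusions are strict; this direction is immediate.

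For the converse I would treat $m=2$, where $N=3$ and the dimensions coincide: $\dim H_\Delta^{\;\nabla}=3=\dim \SO(3)$ and $\dim \mathcal{H}^{\;\nabla}_{\Dr}=6=\dim \SE(3)$. The only genuine subtlety, and the step I expect to be the main obstacle, is that coincidence of dimensions does not by itself force coincidence of the groups; I would close this gap with a connectedness argument, namely that a Lie subgroup whose dimension equals that of the ambient group is open, hence, when the ambient group is connected, equal to it. Thus the identity component $(H_\Delta^{\;\nabla})_0$, being a $3$-dimensional connected subgroup of the connected group $\SO(3)$, coincides with $\SO(3)$, and from $(H_\Delta^{\;\nabla})_0\subset H_\Delta^{\;\nabla}\subset \SO(3)$ one obtains $H_\Delta^{\;\nabla}=\SO(3)$, so the inclusion is not strict. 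The same reasoning, using that $\SE(3)=\R^3\rtimes \SO(3)$ is connected, gives $\mathcal{H}^{\;\nabla}_{\Dr}=\SE(3)$. Combining the two cases, both inclusions are strict exactly when $m\geq 3$, which is the assertion.
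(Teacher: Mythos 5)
Your argument is correct and follows essentially the same route as the paper, which also deduces the corollary from the dimension counts $m+n\leq (m+n)(m+n-1)/2$ and $2(m+n)\leq (m+n)(m+n+1)/2$, with equality precisely when $m=2$. Your additional connectedness argument for the case $m=2$ (a Lie subgroup of full dimension in a connected group must be the whole group) makes explicit a step the paper leaves implicit, and is a welcome clarification rather than a deviation.
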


%%%%%%%%%%%%%%%%%%%%%%%%%%%%%%%%%%%%%%%%%%
\section{Appendix}
\subsection{\emph{o-regular} controls}
We first generalize the usual definition of regular control and then provide a 
result about existence of such controls. Let $M$ be an $n$-dimensional smooth manifold,
$\mc{F}$ a (possibly infinite) family of smooth vector fields on $M$, and let $\Delta_{\mc{F}}$ 
be the smooth \emph{singular} distribution (cf. \cite{harms12}) spanned by $\mc{F}$,
i.e.
\[
\Delta_{\mc{F}}|_p=\hbox{span}\{X|_p\ |\ X\in\mc{F}\}\subset T_p M,\quad p\in M.
\]
We use the word "singular" (to emphasize the fact that the rank (dimension) of $\Delta_{\mc{F}}$ might vary from point to point.
One can, in fact, prove given any such family $\mc{F}$,
there is a \emph{finite} subfamily $\mc{F}_0=\{X_1,\dots,X_m\}$ such that $\Delta_{\mc{F}}=\Delta_{\mc{F}_0}$,
and $m\leq n(n+1)$ (see \cite{drager12,sussmann08}, or \cite{Rifford} when $\Delta_{\mc{F}}$ has constant rank).
Moreover, by $\mathrm{span}\ S$ we mean $\R$-linear span of a set $S$.

\begin{definition}\label{def:orbit}
An absolutely continuous (a.c.) curve $\gamma:[0,T]\to M$ is horizontal with respect to $\mc{F}$
if there is a finite subfamily $\{X_1,\dots,X_m\}$ of $\mc{F}$
and $u=(u_1,\dots,u_d)\in L^1([0,T],\R^m)$, $m\in\mathbb{N}$
(here $m$ might depend on the curve $\gamma$ in question),
such that for almost every $t\in [0,T]$,
\[
\dot{\gamma}(t)=\sum_{i=1}^m u_i(t)X_i|_{\gamma(t)}.
\]
The \emph{orbit} $\mc{O}_{\mc{F}}(p)$ of $\mc{F}$ through $p\in M$
is the set of all points of $M$ reached by $\mc{F}$-horizontal paths $\gamma$
with $\gamma(0)=p$.
\end{definition}

If $\Delta$ is a smooth distribution of \emph{constant rank} $k$ on $M$,
and if $\mc{F}=\mc{F}_{\Delta}$ is the set of smooth vector fields tangent to $\Delta$,
then it is easy to see that $\Delta=\Delta_{\mc{F}}$,
and that an a.c. curve is $\Delta$-horizontal 
if and only if it is $\mc{F}$-horizontal.
Therefore, in this case the concept of orbit coincides with the notion
we have used previously in the paper,
and one can without ambiguity denote it by $\mc{O}_{\Delta}(p)$ instead of $\mc{O}_{\mc{F}}(p)$.

For a smooth vector field $X$ write $\Phi_X:D\to M$ for its flow,
where $D=D_X$ is an open connected subset of $\R\times M$ containing $\{0\}\times M$.
We also use the notation $(\Phi_X)_t(x)=(\Phi_X)^x(t)=\Phi_X(t,x)$ when $(x,t)\in D$.

The orbit of a family $\mc{F}$ of vector fields has the following properties (cf. \cite{harms12}, \cite{Jean}).

\begin{theorem}[Orbit Theorem]\label{th:orbit}
\begin{enumerate}
\item The orbit $\Oh_{\mc{F}}(p)$ is an immersed submanifold of $M$.
\item Any continuous (resp. smooth) map $f:Z\to M$, where $Z$ is a smooth manifold,
such that $f(Z)\subset \Oh_{\mc{F}}(p)$ is continuous (resp. smooth) as a map $f:Z\to \Oh_{\mc{F}}(p)$.
\item If one writes $G_{\mc{F}}$ for the set of all locally defined diffeomorphisms of $M$
of the form $(\Phi_{X_r})^{t_1}\circ\dots\circ (\Phi_{X_d})^{t_d}$
for $X_1,\dots,X_d\in \mc{F}$ and $t_1,\dots,t_d\in\R$ for which this map is defined,
then
\[
\Oh_{\mc{F}}(p)&=\{\varphi(p)\ |\ \varphi\in G_{\mc{F}}\} \\
T\Oh_{\mc{F}}(p)&=\mathrm{span}\{\varphi_*(X)\ |\ \varphi\in G_{\mc{F}},\ X\in\mc{F}\},
\]
wherever the expressions $\varphi(p)$ and $\varphi_*(X)$ are defined.
\end{enumerate}
\end{theorem}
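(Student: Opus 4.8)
The statement is the classical Stefan--Sussmann orbit theorem, so the plan is to reconstruct its proof in the present (singular) setting. First I would reduce to a \emph{finite} family: as recalled just above the statement, there is a finite $\mc{F}_0=\{X_1,\dots,X_m\}$ with $\Delta_{\mc{F}}=\Delta_{\mc{F}_0}$, so $\mc{F}$ and $\mc{F}_0$ have the same horizontal curves; this reduction is harmless because all three descriptions will be shown to coincide. The conceptual core is to equip $\Oh:=\Oh_{\mc{F}}(p)$ with the right candidate tangent distribution. For $q\in\Oh$ I define
\[
\Delta_q:=\mathrm{span}\{(\varphi_* X)\onq\ :\ X\in\mc{F},\ \varphi\in G_{\mc{F}},\ q\in\IM\varphi\}.
\]
The first key point, essentially tautological from the fact that $G_{\mc{F}}$ is a pseudogroup closed under composition and inversion, is that $\Delta$ is \emph{$G_{\mc{F}}$-invariant}, i.e. $\psi_*\Delta_q=\Delta_{\psi(q)}$ for every $\psi\in G_{\mc{F}}$ defined at $q$. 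In particular $\dim\Delta_q$ is constant along the flow of each $X\in\mc{F}$, hence constant on $\Oh$; call this common value $k$.

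Next I would build the manifold structure on $\Oh$ out of compositions of flows. Fixing $q$, choose $X_1,\dots,X_k\in\mc{F}$ and $\varphi_1,\dots,\varphi_k\in G_{\mc{F}}$ so that the vectors $(\varphi_i{}_*X_i)\onq$ form a basis of $\Delta_q$, and take as chart the map $\Psi(t_1,\dots,t_k)$ obtained by composing the conjugated flows $\varphi_i\circ(\Phi_{X_i})^{t_i}\circ\varphi_i^{-1}\in G_{\mc{F}}$ applied to $q$. Since each factor lies in $G_{\mc{F}}$, the image of $\Psi$ stays in $\Oh$, and its differential at $0$ has columns $(\varphi_i{}_*X_i)\onq$, so $\Psi$ is an immersion realizing $\Delta_q$ as its tangent space. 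Declaring such maps the charts, compatibility of overlapping charts follows from the $G_{\mc{F}}$-invariance of $\Delta$ together with the fact that all transition maps are compositions of flow maps, hence smooth. This yields item 1 (the immersed-submanifold structure) and simultaneously the identity $T_q\Oh=\Delta_q=\mathrm{span}\{(\varphi_*X)\onq\}$ of item 3.

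It then remains to reconcile the three descriptions and to prove the weak-embedding property. The inclusion $\{\varphi(p):\varphi\in G_{\mc{F}}\}\subseteq\Oh$ is immediate, since each factor $(\Phi_X)^t$ traces an integral curve of some $X\in\mc{F}$, which is $\Delta_{\mc{F}}$-horizontal; by construction this flow-orbit is exactly the immersed manifold built above. For the reverse inclusion I would show that every $\mc{F}$-horizontal a.c. curve $\gamma$ with $\gamma(0)=p$ stays in the flow-orbit: since $\dot\gamma(t)\in\Delta_{\mc{F}}|_{\gamma(t)}\subseteq\Delta_{\gamma(t)}=T_{\gamma(t)}\Oh$ for a.e. $t$, the curve $\gamma$ is tangent to the immersed submanifold $\Oh$, and such a curve must remain in $\Oh$, which is precisely item 2. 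Item 2 itself I would establish directly from the charts: near $q$ the orbit meets a small coordinate cube of $M$ in an at most countable union of $k$-dimensional slices (countability coming from separability of $M$ and the fact that $G_{\mc{F}}$ is generated by countably many finite-time flows), so a continuous $f:Z\to M$ with $f(Z)\subset\Oh$ is forced locally into a single slice, and reading off coordinates shows $f$ is continuous, resp. smooth, into $\Oh$.

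The hard part will be the constant-rank and chart-compatibility argument of the second paragraph: proving that $\dim\Delta_q$ is genuinely locally constant on $\Oh$ and that the flow-composition charts overlap smoothly, \emph{despite} $\Delta_{\mc{F}}$ itself being singular (non-constant rank) on $M$. The other genuinely technical point is the weak-embedding property of item 2, whose proof rests on the countable-slice structure and which is exactly what underwrites the ``horizontal curves stay in the orbit'' step used for the reverse reachability inclusion; for both of these I would follow the line of \cite{sussmann08,Jean}.
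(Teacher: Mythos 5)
The paper does not actually prove this theorem: it is quoted as a known result with references (\cite{harms12}, \cite{Jean}), being the classical Stefan--Sussmann orbit theorem. Your reconstruction follows exactly the standard proof found in those sources --- the $G_{\mc{F}}$-invariant distribution $\Delta_q$, constancy of its rank along the orbit by invariance, charts built from compositions of conjugated flows, the weak-embedding (initial submanifold) property via countable slices, and the tangency argument showing a.c.\ horizontal curves cannot leave the $G_{\mc{F}}$-orbit --- and the logical architecture is sound, including your correct observation that item 2 is what underwrites the reconciliation of the a.c.-orbit with the flow-orbit. One minor inaccuracy worth fixing: the countability of slices does not come from ``$G_{\mc{F}}$ being generated by countably many finite-time flows'' (the time parameters range over the continuum, so $G_{\mc{F}}$ is uncountable); it comes from second countability of the orbit in its manifold topology, which one establishes separately, e.g.\ by noting that compositions of flows with rational times and generators from the finite family $\mc{F}_0$ reach a dense set of chart centers, so countably many charts cover the orbit. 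This is a standard but genuinely needed lemma, and as stated your parenthetical would not compile into a proof.
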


As a consequence of Case 3. of the theorem,
one sees that $L^1([0,T],\R^m)$ in Definition \ref{def:orbit}
can be replaced by $L^2([0,T],\R^m)$, which for the rest paper
will be the appropriate \emph{space of controls} for our needs.

Following \cite{Rifford} we define the concepts of the end-point mapping and that
of a regular ($L^2$-)control.

\begin{definition}\label{end-point}
For every $p\in M$, any time $T>0$,
and any smooth finite family of vector fields $\mc{F}=\{X_1,\dots,X_m\}$ on $M$, there exists a maximal
open subset $U^{p,T}_{\mc{F}}\subset L^2([0,T],\mathbb{R}^m)$ such that for every $u=(u_1,\dots,u_m)\in U_p^T$, there exists a unique
absolutely continuous solution $\gamma_u:[0,T]\to M$ to the Cauchy problem
\begin{align}\label{eq:CE}
\dot{\gamma_u}(t)=\sum_{i=1}^m u_i(t)X_i(\gamma_u(t)),\quad \gamma_u(0)=p.
\end{align}

The end-point map $E^{p,T}_{\mc{F}}$ associated to $\mc{F}$ at $p$ in time $T$ is defined as the mapping
\[
E^{p,T}_{\mc{F}}:U^{p,T}_{\mc{F}}\rightarrow M,\quad E^{p,T}_{\mc{F}}(u)=\gamma_u(T).
\]
\end{definition}

By \cite[Proposition 1.8]{Rifford} we have the following.

\begin{proposition}
With $p,T,\mc{F}$ as above, the end point map $E^{p,T}_{\mc{F}}:U^{p,T}_{\mc{F}}\rightarrow M$
is $C^1$-smooth.
\end{proposition}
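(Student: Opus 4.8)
The plan is to deduce the statement from the classical theory of smooth dependence of solutions of ordinary differential equations on parameters, adapted to the infinite-dimensional control space $L^2([0,T],\R^m)$. I would carry this out in three stages: a geometric reduction to the Euclidean setting, the identification of a candidate differential through the variational equation, and Gronwall estimates establishing Fr\'echet differentiability together with continuity of the differential.

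\emph{Reduction to $\R^N$ with complete fields.} Since $C^1$-smoothness is a local property on the open set $U^{p,T}_{\mc{F}}$, I would fix a reference control $\bar u\in U^{p,T}_{\mc{F}}$ and note that $K:=\gamma_{\bar u}([0,T])$ is compact in $M$. Choosing a bump function equal to $1$ near $K$ and supported in a relatively compact set, I would replace each $X_i$ by a compactly supported (hence complete) field $\tilde X_i$ agreeing with $X_i$ near $K$. By the continuous dependence established below, every $u$ close to $\bar u$ in $L^2$ produces a trajectory remaining where $\tilde X_i=X_i$; this simultaneously shows that $U^{p,T}_{\mc{F}}$ is open and that $E^{p,T}_{\mc{F}}$ coincides locally with the end-point map of the complete system, whose domain is all of $L^2([0,T],\R^m)$. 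Using a Whitney embedding $M\hookrightarrow\R^N$ I would extend the $\tilde X_i$ to compactly supported smooth fields on $\R^N$; as the $\tilde X_i$ are tangent to $M$, trajectories issued from $M$ stay in $M$, so it suffices to treat the Euclidean Cauchy problem
\[
\dot x(t)=\sum_{i=1}^m u_i(t)\,\tilde X_i(x(t)),\quad x(0)=p,
\]
where now the $\tilde X_i$ are smooth, bounded, with bounded derivatives of every order.

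\emph{Well-posedness and the candidate differential.} For $u\in L^2\subset L^1([0,T],\R^m)$ the right-hand side is of Carath\'eodory type with an $L^1$-in-time Lipschitz constant, so there is a unique absolutely continuous solution $x_u$ on $[0,T]$; subtracting two solutions and applying Gronwall's lemma gives $\n{x_u-x_v}_\infty\le C\,\n{u-v}_{L^2}$, so $u\mapsto x_u$ is Lipschitz from $L^2$ into $C([0,T],\R^N)$. For a variation $w\in L^2$ the natural candidate for the derivative is the solution $y_w$ of the linearized equation
\[
\dot y(t)=\Big(\sum_{i=1}^m u_i(t)\,D\tilde X_i(x_u(t))\Big)y(t)+\sum_{i=1}^m w_i(t)\,\tilde X_i(x_u(t)),\quad y(0)=0,
\]
whose coefficient is $L^1$ in $t$; through the fundamental solution $\Phi(t,s)$ of its homogeneous part one has $y_w(t)=\int_0^t\Phi(t,s)\sum_i w_i(s)\tilde X_i(x_u(s))\,ds$, and Cauchy--Schwarz bounds $\n{y_w(T)}\le C\n{w}_{L^2}$, so $w\mapsto y_w(T)$ is the proposed bounded linear differential $D_u E^{p,T}_{\mc{F}}$.

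\emph{Differentiability, continuity, and transfer to $M$.} To confirm Fr\'echet differentiability I would estimate the remainder $r_w:=x_{u+w}-x_u-y_w$: subtracting the three equations and Taylor-expanding each $\tilde X_i$ to first order produces a term governed by the linear flow plus an error quadratic in $x_{u+w}-x_u$ and controlled by the uniform bound on $D^2\tilde X_i$; Gronwall's lemma then yields $\n{r_w}_\infty=o(\n{w}_{L^2})$, which is exactly differentiability with differential $w\mapsto y_w(T)$. Continuity of $u\mapsto D_u E^{p,T}_{\mc{F}}$ in operator norm follows from continuous dependence of both $x_u$ and the fundamental solution $\Phi=\Phi_u$ on $u$, again by Gronwall, giving the $C^1$ conclusion; since trajectories and variations stay tangent to $M$, the differential lands in $T_{E^{p,T}_{\mc{F}}(u)}M$, so $E^{p,T}_{\mc{F}}$ is $C^1$ into the manifold $M$. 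The main obstacle I anticipate is precisely the infinite-dimensional nature of the control space: the Gronwall estimates must be made uniform in the perturbation $w$ and the remainder controlled genuinely quadratically, which is where the boundedness of the derivatives of the extended fields $\tilde X_i$ is essential.
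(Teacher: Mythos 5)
Your proof is correct, but note that the paper does not actually prove this proposition: it simply cites \cite[Proposition 1.8]{Rifford}, so there is no internal argument to compare against. What you have written is a correct, self-contained version of the standard proof underlying that citation --- localization of the fields near the compact reference trajectory, Whitney embedding, Carath\'eodory well-posedness, the variational equation giving the candidate differential (which agrees with the formula \eqref{dE0} used later in the paper's appendix, with $\Phi(T,s)=d_q\psi^u(T,p)\,(d_q\psi^u(s,p))^{-1}$), and Gronwall estimates for the remainder and for operator-norm continuity of the differential. All the key estimates go through: the source terms in the equation for $r_w$ are bounded by $C\n{w}_{L^2}^2$ after Cauchy--Schwarz, so you in fact get a quadratic remainder, stronger than the $o(\n{w}_{L^2})$ you need. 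Two minor points of precision: the map $u\mapsto x_u$ is Lipschitz only on bounded subsets of $L^2$ (the Gronwall factor is $\exp(L\sqrt{T}\n{u}_{L^2})$), which is all you use; and the apparent circularity in the localization step (using continuous dependence to justify the cutoff) is harmless provided you state it in the right order --- first establish the estimates for the modified complete system, then observe that for $u$ near $\bar u$ its trajectory stays in the region where the modified and original fields coincide, which simultaneously yields openness of $U^{p,T}_{\mc{F}}$ and the local identification of the two end-point maps.
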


This proposition allows us to give the following definition.

\begin{definition}\label{def:regcontrol}
A control $u\in U^{p,T}_{\mc{F}}$ is said to be \emph{o-regular} with respect to $p$ in 
time $T$ if the rank of $D_u E_{\mc{F}}^{p,T}:L^2([0,T],\R^m)\to T_{E_{\mc{F}}^{p,T}(u)} M$, the differential of $E_{\mc{F}}^{p,T}(u)$ at $u$,
is equal to $\dim \Oh_{\mc{F}}(p)$. Here, ''o-regular'' stands for orbitally regular. 
\end{definition}

\begin{remark}
A control $u$ is usually said to be regular (with respect to $p$ in time $T$) if  the rank of $E_{\mc{F}}^{p,T}(u)$ is equal to the dimension $n$ of the ambient manifold $M$ (cf \cite[Section 1.3]{Rifford}), implying in particular that the orbit $\Oh_{\mc{F}}(p)$ is open in $M$ and thus is $n$-dimensional. If the distribution generated by $\mc{F}$ verifies the LARC, it can be proved that any pair of points in $M$ can be joined by the trajectory tangent to this distribution and corresponding to a regular control, cf. \cite{Bellaiche}. In this paper, we have extended this definition without assuming controllability. 
\end{remark}

The main purpose of this appendix is to generalize the result of 
\cite{Bellaiche} 
to the case where the distribution $\Delta$ is not necessarily bracket-generating. 
Indeed, we have the following result.

\begin{proposition}\label{regular-0}
Let $M$ be an $n$-dimensional smooth manifold, $\mc{F}=\{X_1,\dots,X_m\}$, $m\in\mathbb{N}$,
a smooth \emph{finite} family of vector fields on $M$.
Then, for every $p\in M$ and time $T>0$, and every $q\in \Oh_{\mc{F}}(p)$, there exists
a \emph{o-regular} control with respect to $p$ in time $T$ such that the unique solution $\gamma_u$ to the Cauchy problem \eqref{eq:CE} such that $\gamma_u(T)=q$.
\end{proposition}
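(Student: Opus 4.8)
The plan is to start from an \emph{arbitrary} control steering $p$ to $q$ and to enrich it, by inserting suitable ``detour loops'', until the rank of its end-point differential reaches the maximal possible value $d:=\dim\Oh_{\mc{F}}(p)$, all the while keeping its end-point fixed at $q$. The first ingredient is the variational (Duhamel) description of the image of the differential: for $u\in U^{p,T}_{\mc{F}}$ with trajectory $\gamma_u$ and $q=E^{p,T}_{\mc{F}}(u)$, one has
\[
\IM\big(D_u E^{p,T}_{\mc{F}}\big)=\mathrm{span}\big\{\Psi_u(T,t)\,X_i(\gamma_u(t))\ \big|\ t\in[0,T],\ 1\leq i\leq m\big\},
\]
where $\Psi_u(T,t)$ is the differential at $\gamma_u(t)$ of the local flow map sending $\gamma_u(t)$ to $q$ along $u$. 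Each such vector is tangent to $\Oh_{\mc{F}}(p)$, so $\mathrm{rank}(D_u E^{p,T}_{\mc{F}})\leq d$ always, with equality exactly when $u$ is \emph{o-regular}. I would also record two elementary invariances used throughout: (i) this image, and hence the rank, is unchanged under a time-reparametrisation of $u$, since it depends only on the geometric trajectory and the flow maps along it; and (ii) the flow map of a concatenated control is the composition of the flow maps of its pieces.

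Next, reachability in the prescribed time. Because $q\in\Oh_{\mc{F}}(p)$, the description of the orbit in Theorem~\ref{th:orbit} provides $\varphi_0\in G_{\mc{F}}$ with $\varphi_0(p)=q$; realising $\varphi_0$ as a piecewise-constant control and rescaling time yields $u_0\in U^{p,T}_{\mc{F}}$ with $E^{p,T}_{\mc{F}}(u_0)=q$. If $\mathrm{rank}(D_{u_0}E^{p,T}_{\mc{F}})=d$ we are done; otherwise set $V:=\IM(D_{u_0}E^{p,T}_{\mc{F}})\subsetneq T_q\Oh_{\mc{F}}(p)$. By the spanning statement of Theorem~\ref{th:orbit}, $T_q\Oh_{\mc{F}}(p)$ is generated by the vectors $(\varphi_*X_i)(q)$ with $\varphi\in G_{\mc{F}}$ and $1\leq i\leq m$, so at least one of them, say $w=(\varphi_*X_i)(q)$, fails to lie in $V$.

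The enrichment step is the heart of the argument. Writing $\varphi$ as a composition of flows of the $X_k$'s, I append to $u_0$ the control realising first $\varphi^{-1}$ and then $\varphi$; this detour loop returns to $q$, so the new control $u_1$ still satisfies $E^{p,T}_{\mc{F}}(u_1)=q$ (up to the reparametrisation of (i)). The flow map of the appended loop is $\varphi\circ\varphi^{-1}=\id$, whose differential at $q$ is the identity; hence by (ii) the contribution of the initial $u_0$-part to $\IM(D_{u_1}E^{p,T}_{\mc{F}})$ is still exactly $V$. On the other hand, evaluating the variational formula at the turning instant of the loop, where the trajectory sits at $\varphi^{-1}(q)$ and the remaining flow map is $\varphi$, produces $d\varphi|_{\varphi^{-1}(q)}\big(X_i(\varphi^{-1}(q))\big)=w\notin V$. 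Therefore $\mathrm{rank}(D_{u_1}E^{p,T}_{\mc{F}})\geq \dim V+1$. Iterating, the rank strictly increases at each step and is bounded above by $d$, so after at most $d$ insertions we obtain a control of rank exactly $d$ steering $p$ to $q$; a final time-reparametrisation places it on $[0,T]$ without altering its rank or end-point, which is the desired \emph{o-regular} control.

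I expect the main obstacle to be the careful bookkeeping in the absolutely continuous ($L^2$) category: making the variational formula for $\IM(D_uE^{p,T}_{\mc{F}})$ precise, checking that each appended detour together with its backward leg stays in the domain $U^{\cdot,\cdot}_{\mc{F}}$ of the end-point map, and verifying that the differential of the loop is genuinely the identity so that the directions already collected in $V$ survive the insertion. Once these points are secured, the reparametrisation invariance and the termination of the iteration are routine.
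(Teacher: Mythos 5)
Your argument is correct, and it reaches the conclusion by a route that is organized differently from the paper's. Both proofs rest on the same two pillars: part 3 of the Orbit Theorem~\ref{th:orbit}, which exhibits $T_q\Oh_{\mc{F}}(p)$ as the span of the vectors $(\varphi_*X_i)(q)$ with $\varphi\in G_{\mc{F}}$, and the needle-variation computation showing that $d_q\psi^u(T,p)(d_q\psi^u(t,p))^{-1}X_i(\gamma_u(t))$ lies in the (closed, finite-dimensional) image of $D_uE^{p,T}_{\mc{F}}$ for every $t$ and $i$ --- this is exactly the content of the paper's formula \eqref{dE0} together with the limits of the controls $v^l_\varepsilon$. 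Where you differ is in how the control is assembled. The paper builds it in a single pass: an inductive basis-exchange produces diffeomorphisms $\varphi_1,\dots,\varphi_d\in G_{\mc{F}}$ and vector fields $Y_l\in\mc{F}$ so that the transported vectors $\tilde Z_l$ at the switching times already form a basis of $T_{q_0}\Oh_{\mc{F}}(p)$, and o-regularity is then checked once. You instead start from an \emph{arbitrary} control steering $p$ to $q$ and run an induction on the rank, inserting a null-homotopic detour $\varphi^{-1}$ followed by $\varphi$ whenever some $(\varphi_*X_i)(q)$ is missing from the current image; the observation that the loop's differential is the identity, so that previously acquired directions survive each insertion, is the one genuinely new ingredient relative to the paper and is what makes the induction close. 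Your approach buys a slightly stronger-looking statement (any control reaching $q$ can be upgraded to an o-regular one by appending loops, at the cost of moving far from the original control in $L^2$, consistent with the paper's final remark), while the paper's one-shot construction avoids the bookkeeping of verifying that the rank is monotone under concatenation. The points you flag as remaining to be secured --- the precise form of $\IM(D_uE^{p,T}_{\mc{F}})$, the domain of definition of the appended legs (guaranteed because $\varphi$ and $\varphi^{-1}(q)$ are defined wherever $(\varphi_*X_i)(q)$ is), and reparametrization invariance --- are all routine and are handled by the same estimates the paper borrows from Rifford; I see no gap.
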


\begin{remark}\label{re:regdense}
By the proof of Proposition 1.12 in \cite{Rifford} (see also \cite{harms12}), the conclusion is immediate if $T_q\Oh_{\mc{F}}(p)$ is equal for every $q\in\Oh_{\mc{F}}(p)$ to $\mathrm{Lie}_q(\cal{F})$, the evaluation at $q$ of the Lie algebra generated by $\cal{F}$.
In fact, in this case a stronger result holds, namely the set of regular controls is dense
in $U^{q,T}_{\mc{F}}$ for every $q\in \mc{O}_{\mc{F}}(p)$ and $T>0$.
As a consequence, any control $u_0\in E^{p,T}_{\mc{F}}$
admits an o-regular control $u$ arbitrarily close (in $L^2$) to $u_0$
such that $E^{p,T}_{\mc{F}}(u)=E^{p,T}_{\mc{F}}(u_0)$.
\end{remark}

\begin{proof}
Fix $q_0\in \Oh_{\mc{F}}(p)$ and $(Z^0_1,\dots,Z^0_d)$ a basis of $T_{q_0}\Oh_{\mc{F}}(p)$.
According to Theorem \ref{th:orbit}, there exists $\varphi_1\in G_{\mc{F}}$ and $Y_1\in \mc{F}$ with $q_1:=\varphi_1^{-1}(q_0)$
such that $Z^0:=(\tilde{Z}^0_1,Z^0_2,\dots,Z^0_d)$,
where $\tilde{Z}^0_1=(\varphi_{1})_*Y_1|_{q_0}$,
forms a basis of $\Oh_{\mc{F}}(p)$ at $q_0$.
The basis $Z^0$ is the pushforward of a basis $Z^1=(Z^1_1,\dots,Z^1_d)$ of $T_{q_1}\Oh_{\mc{F}}(p)$ by $\varphi_1$ and obviously $Z^1_1=Y_1$.
We proceed inductively (using Theorem \ref{th:orbit}) with this construction for $1\leq l\leq d$ so that the basis $Z^{l-1}=(Z^{l-1}_1,\dots,Z^{l-1}_{l-1},\tilde{Z}^{l-1}_{l},Z^{l-1}_{l+1},\dots,Z^{l-1}_d)$ of 
$T_{q_{l-1}}\Oh_{\mc{F}}(p)$ is the pushforward of a basis $Z^l=(Z^l_1,\dots,Z^l_d)$ with $q_l:=\varphi_l^{-1}(q_{l-1})$, $Y_l:=Z^l_l\in \mc{F}$ and $\tilde{Z}^{l-1}_l=(\varphi_l)_*(Z^l_l)$.
Finally consider $\varphi_{d+1}\in G$ so that $\varphi_{d+1}(p)=q_n$ and set $\psi=\varphi_1\circ\varphi_2\circ\cdots\circ\varphi_{d+1}$.
One has that $\psi(p)=q_0$ and there exists $T>0$ and $u\in L^2([0,T],\mathbb{R}^m)$ such that the unique solution $\gamma_u$ to the Cauchy problem $\dot{\gamma}_u(t)=\sum_{i=1}^m u_i(t)X_i|_{\gamma_u(t)}$,  $x(0)=p$
verifies $\gamma_u(T)=q_0$. Then the flow of diffeomorphisms $\psi^{u}(t,q)$ corresponding to the time-varying vector field $q\mapsto \sum_{i=1}^m u_i(t)X_i|_q$ verifies $\psi=\psi^{u}(T,0)$ and $\psi^{u}(t,p)=\gamma_u(t)$ where one has, for $0\leq s\leq t\leq T$,  $\frac{\partial\psi^{u}(t,q)}{\partial t}=\sum_{i=1}^m u_i(t)X_i|_{\psi^{u}(t,q)}$ together with the initial condition $\psi^{u}(0,q)=q$ for every $q\in M$. With the above notations, it is clear that, for every $1\leq l\leq d$, 
\[
\big(d_q\psi^{u}(T,p)(d_q\psi^{u}(t_l,p))^{-1}Y_l\big)_{l=1,\dots,d}=:(\tilde{Z}_1,\dots,\tilde{Z}_d)
\]
forms a basis of $T_{q_0}\Oh_{\mc{F}}(p)$,
where $d_q\psi^{u}(t,\cdot)$ denotes the differential of $\psi(t,q)$ with respect to the $q$ variable.

Recall that the differential of the end-point map at $u$ is the linear map 
$D_u E^{p,T}_{\mc{F}}: L^2([0,T],\mathbb{R}^m)\rightarrow T_{q_0}\Oh_{\mc{F}}(p)$ given by 
\begin{equation}\label{dE0}
D_u E^{p,T}_{\mc{F}}(v)=d_q\psi^{u}(T,p)\int_0^T(d_q\psi^{u}(t,p))^{-1}X^v(t,\gamma_u(t))dt,
\end{equation}
where $X^v(t,x)=\sum_{i=1}^m v_i(t)X_i|_{x}$ for almost every $t\in [0,T]$ and every $x\in M$. 
We further complete the notations as follows. Let $0=t_0<t_1<\cdots<t_{d+1}:=T$ the sequence of times where $\gamma_u(t_l)=q_{d+1-l}$ with the convention that $p=q_{d+1}$ and thus $\psi^{u}(T,p)\psi^{u}(t_l,p)^{-1}(q_l)=q_0$, for $0\leq l\leq d+1$. Moreover, one has 
$Y_l=\sum_{i=1}^m y_{il}X_i|_{q_l}$ for $1\leq l\leq d$ and some real numbers $(y_{il})$. 

For every $\varepsilon>0$  small enough and $1\leq l\leq d$, consider the sequence $(v^l_{\varepsilon})$ of functions in $L^2([0,T],\mathbb{R}^m)$ defined by $v^l_{\varepsilon}(t)=\frac1{\varepsilon}{(y_{il})_{1\leq i\leq k}}$ if $t_l-\varepsilon\leq t\leq t_l$ and zero otherwise. It is a matter of standard computations (as performed in \cite[Proposition 1.10]{Rifford} to prove that, for every $1\leq l\leq d$, $D_u E^{p,T}_{\mc{F}}(v^l_{\varepsilon})$ tends to $d_q\psi^{u}(T,p)(d_q\psi^{u}(t_l,p))^{-1}Y_l=\tilde{Z}_l$ as $\varepsilon$ tends to zero. Since the the range of $D_u E^{p,T}_{\mc{F}}$ is closed, we deduce that it contains $\tilde{Z}_l$ for every $1\leq l\leq d$. 

We have therefore proved that $u$ is o-regular at $p$ in time $T$ in the sense of Definition~\ref{end-point}.
\end{proof}

\begin{remark}
In contrast to what was discussed in Remark \ref{re:regdense},
we highlight the fact that
in general case where the (finite) family $\mc{F}$
of vector fields does not satisfy (everywhere on the orbit)
the H\"ormander condition $\mathrm{Lie}_q\mc{F}=T_q\mc{O}_{\mc{F}}(p)$,
for a given control $u_0\in U^{p,T}_{\mc{F}}$ the o-regular controls $u$
(in the sense of Definition \ref{def:regcontrol})
such that $E^{p,T}_{\mc{F}}(u_0)=E^{p,T}_{\mc{F}}(u)$
might lie far away from $u_0$ in $L^2$-sense.

As the standard example,
consider on $M=\R^2$, with coordinates $(x,y)$, the vector fields (cf. \cite{harms12}, p.12)
$X=\pa{x}$ and $Y=\phi(x)\pa{y}$
where $\phi:\R\to\R$ is smooth such that $\phi(x)=0$ if $x\leq 0$
and $\phi(x)>0$ for $x>0$. Let $\mc{F}=\{X,Y\}$ and $\R^2_-=\{(x,y)\ |\ x<0\}$

It is clear that for any point $p_0=(x_0,y_0)$ with $x_0<0$, any $T>0$ and any control $u_0$ such that $E_{\mc{F}}^{p_0,t}(u_0)\in \R^2_-$ for all $t\in [0,T]$,
there is an $L^2$-neighbourhood of $u_0$
such that $E_{\mc{F}}^{p_0,T}$ is not regular at any of its points.

A regular control $u$ steering $p_0$ to $q_0=E_{\mc{F}}^{p_0,T}(u_0)$ in time $T$
(i.e. $E_{\mc{F}}^{p_0,T}(u)=q$), which exists thanks to Proposition \ref{regular-0},
must have the property that $E_{\mc{F}}^{p_0,T_0}(u)\notin\R^2_-$
for some $0<T_0\leq T$.
Therefore, if we write $\gamma_u(t)=(x_u(t),y_u(t))=E^{p_0,t}(u)$
and $u=(u_1,u_2)$, one has
\[
|x_0|\leq |x_u(T_0)-x_0|=\big|\int_0^{T_0} u_1(s)ds\big|\leq \sqrt{T_0}\n{u_1}_{L^2([0,T])}
\leq \sqrt{T}\n{u}_{L^2([0,T],\R^2)}.
\]
If for example one took $u_0=0$, hence $q_0=p_0$,
the above inequality would prove, as was claimed above,
that a regular control $u$ steering $p_0$ to $q_0$ in time $T$
cannot be near $u_0$ in $L^2$-sense.
\end{remark}

%%%%%%%%%%%%%%%%%%%%%%%%%%%%%%%%%%%%%%%%%%%%%%%%%%%%%%%%%%%%%%%%%
\subsection{Semisimplicity of the Lie algebra $L$}\label{semi0}
%%%%%%%%%%%%%%%%%%%%%%%%%%%%%%%%%%%%%%%%%%%%%%%%%%%%%%%%%%%%%%%%%

In this paragraph, we prove that the Lie algebra $L$ of $H_\Delta^{\; \nabla}$ whose generators are given in Eq.~\eqref{liealL} is compact semisimple.
In order to so, according to the proof of Proposition 26.3 in \cite{postnikov},
it is enough (and necessary) to show that $L$ is compact and has trivial center.
We also recall that a Lie algebra $\mathfrak{g}$
is called compact (\cite[Definition 26.2]{postnikov})
if there is a positive definite inner product $k$ on $\mathfrak{g}$
which satisfies
\begin{align}\label{eq:compact}
k([x,y],z)+k(y,[x,z])=0,\quad \forall x,y,z\in\mathfrak{g}.
\end{align}
It follows immediately that any Lie-subalgebra $\mathfrak{h}$ of a compact Lie-algebra $\mathfrak{g}$
is also compact, and therefore, $L$ as a Lie-subalgebra of the compact $\mathfrak{so}(n+m)$ is compact.

It remains to show that the center $L$ is trivial.
Consider $C$ in the center of $L$, i.e., $[C,X]=0$ for every $X\in L$. Let $\mathfrak{a}$ and $\mathfrak{b}$ be respectively the $\R$-linear span of the $(A_h)_{1\leq h\leq m}$ and the span of the $(B_{h,k})_{(h,k)\in \I}$. Note that $\mathfrak{b}=\mathfrak{so}(m)$ (up to an isomorphism of Lie algebras) and $L$ is the direct sum of $\mathfrak{a}$ and $\mathfrak{b}$.
Thus we can write $C=A+B$ with unique $A\in \mathfrak{a}$ and $B\in \mathfrak{b}$.

For every $(h,k)\in \I$, one has $0=[C,B_{h,k}]=[A,B_{h,k}]+[B,B_{h,k}]$. 
Thanks to the relations in Eq.~\eqref{liealL}, one also has that $[A,B_{h,k}]\in \mathfrak{a}$
and $[B,B_{h,k}]\in \mathfrak{b}$, and then, due to the direct sum property one concludes that, for every $(h,k)\in \I$,
$$
[A,B_{h,k}]=[B,B_{h,k}]=0.
$$
Since $\mathfrak{b}$ is semisimple, its center reduces to zero and thus $B=0$. We next 
set $A=\sum_{l=1}^ma_lA_l$ and use the relations $[A,A_h]=0$ for $1\leq h\leq m$. We get that, for $1\leq h\leq m$,
$$
0=\sum_{l=1}^ma_l[A_l,A_h]=\sum_{l=1}^ma_l B_{l,h},
$$
yielding at once that $a_l=0$ for $1\leq h\leq m$ because $m\geq 2$. (Indeed we need at least two distinct indices $h$ as above.) Then $C=0$ which concludes the proof of the claim.

%%%%%%%%%%%%%%%%%%%%%%%%%%%%%%%%%%%%%%%%%%%%%%%%%%%%%%%%%%%%%%%%%%%%%%%%%%%
\vspace{0.1cm}

%%%%%%%%%%%%%%%%%%%%%%%%%%%%%%%%%%%%%%%%%%%%%%%%%%%%%%%%%%%%%%

\end{document}